\documentclass{aic}

\usepackage[utf8]{inputenc}

\usepackage{mathtools}

\usepackage{cleveref}

\usepackage{tikz}
\usepackage{tikz-cd}
\usetikzlibrary{calc}

\theoremstyle{plain}
\newtheorem{theo}{Theorem}
\crefname{theo}{Theorem}{Theorems}
\newtheorem{lem}[theo]{Lemma}
\crefname{lem}{Lemma}{Lemmas}
\newtheorem{res}[theo]{Result}
\crefname{res}{Result}{Results}
\newtheorem{cor}[theo]{Corollary}
\crefname{cor}{Corollary}{Corollarys}

\crefname{prop}{Proposition}{Propositions}
\newtheorem{fact}[theo]{Fact}

\newtheorem{conj}[theo]{Conjecture}

\theoremstyle{definition}

\crefname{exa}{Example}{Examples}

\crefname{prob}{Problem}{Problems}

\theoremstyle{remark}

\crefname{section}{Section}{Sections}
\crefname{figure}{Figure}{Figures}

\newtheorem{claim}{Claim}[theo]
\crefname{claim}{Claim}{Claims}
\renewcommand{\theclaim}{\arabic{claim}}
\newenvironment{claimproof}[1][\proofname\ of Claim \theclaim]{%
  \proof[#1]%
  
}{\endproof}

\newcommand{\icase}[1]{\par\medskip\noindent\textit{Case #1: }}

\renewcommand{\phi}{\varphi}
\renewcommand{\epsilon}{\varepsilon}

\newcommand{\NN}{{\mathbb N}}

\newcommand{\ZZ}{{\mathbb Z}}

\newcommand{\CB}{{\mathcal B}}

\newcommand{\CO}{{\mathcal O}}
\newcommand{\CP}{{\mathcal P}}
\newcommand{\CQ}{{\mathcal Q}}

\newcommand{\CS}{{\mathcal S}}

\newcommand{\CZ}{{\mathcal Z}}

\newcommand{\Gammad}{\Gamma_d}
\newcommand{\Thetad}{\Theta_d}

\newcommand{\limgraphG}{\overline{G}}

\newcommand{\id}{\operatorname{id}}

\renewcommand{\hat}{\widehat}
\renewcommand{\tilde}{\widetilde}
\renewcommand{\hat}{\widehat}

\newcommand{\aut}{\operatorname{Aut}}

\newcommand{\dist}{\operatorname{dist}}

\newcommand{\LeftMostSep}{\mathop{\mathrm{sep}}}
\newcommand{\LevelSet}{\mathop{\mathrm{Lev}}}

\DeclareMathOperator{\Aut}{Aut}

\DeclareMathOperator{\Diam}{diam}
\DeclareMathOperator{\cov}{cov}
\DeclareMathOperator{\Had}{Had}
 
\aicAUTHORdetails{
  title = {Automorphism Groups of Graphs of Bounded Hadwiger Number}, 
  author = {Martin Grohe, Pascal Schweitzer, and Daniel Wiebking},
  plaintextauthor = {Martin Grohe, Pascal Schweitzer, Daniel Wiebking},
  keywords = {automorphism groups of finite graphs, Hadwiger numbers, edge-transitive graphs, pointwise graph limits},
}

\aicEDITORdetails{
   year={2025},
   number={9},
   received={20 February 2023},   
   published={10 October 2025},  
   doi={10.19086/aic.2025.9},     
}   

\begin{document}

\begin{frontmatter}[classification=text]
\title{Automorphism Groups of Graphs of Bounded Hadwiger Number\titlefootnote{The research leading to these results has received funding from the European Research Council (ERC) under the European Union's Horizon 2020 research and innovation programme (EngageS: grant agreement No.~820148) and by the German Research Foundation DFG Koselleck Grant GR 1492/14-1.}} 
\author[mgro]{Martin Grohe}
\author[pschw]{Pascal Schweitzer}
\author[dwieb]{Daniel Wiebking}

\begin{abstract}
We determine the structure of automorphism groups of finite graphs of bounded Hadwiger number. Our proof includes a structural analysis of finite edge-transitive graphs.

In particular, we show that for connected, $K_{h+1}$-minor-free, edge-transitive, twin-free, finite graphs the non-abelian composition factors of the automorphism group have bounded order.

We use this to show that the automorphism groups of finite graphs of bounded Hadwiger number are obtained by repeated group extensions using abelian groups, symmetric groups and groups of bounded order.
\end{abstract}
\end{frontmatter}

\section{Introduction}
Frucht's classic theorem shows that every abstract finite group is the automorphism group of a finite graph, and in fact the graph can even be required to be connected and 3-regular~\cite{MR32987}. We say that the class of finite connected 3-regular graphs is \emph{universal}.
In the years before 1990, various classes of graphs were proven to not be universal, typically by providing a structure theorem for automorphism groups of graphs in the class.
Along these lines, we know that automorphism groups of finite trees are iterated direct and wreath products of symmetric groups~\cite{MR1577579}. More generally, 
Babai gave a classification of automorphism groups of finite planar graphs~\cite{MR0302494,MR0371715}. An early survey of known results, for example including lattices, designs, and strongly regular graphs, is given by Babai and Goodman~\cite{MR1227125}.
 
In this paper, we are interested in classes of finite graphs closed under taking minors. A graph class is minor-closed if it is closed under edge contractions and under taking subgraphs. 

Examples of minor-closed graph classes are the classes of trees, planar graphs, bounded genus graphs (i.e., graphs embeddable without crossings on a fixed surface) and graphs of bounded treewidth. All of these classes play central, recurring roles in graph theory. Their study dates back at least to Wagner's theorem~\cite{MR1513158} from 1937, which states that a graph is planar exactly if it contains neither a complete graph on five vertices ($K_5$) nor a complete bipartite graph with the two parts each having three vertices~($K_{3,3}$) as a minor.

The Hadwiger number of a graph is the order of a largest complete graph obtainable by edge contractions. 
If a minor-closed class of finite graphs is non-trivial (i.e., it is not just the class that contains every graph) then it excludes some graph and therefore it excludes some complete graph.
Thus, a minor-closed class of finite graphs is non-trivial precisely if the Hadwiger number is bounded.
The study of minor-closed graph classes therefore reduces to studying classes of bounded Hadwiger number. In other words, the Hadwiger number is bounded for all classes mentioned above such as trees, planar graphs and bounded genus graphs. We investigate the structure of the automorphism groups of graphs in such classes.

In 1974, Babai showed that a non-trivial minor-closed graph class is not universal by proving that large alternating groups cannot appear as automorphism groups of graphs of bounded Hadwiger number~\cite{MR332554}. In fact, he also showed that for a sufficiently large prime~$p$ the group~$\mathbb{Z}_p^3$ does not appear as a subgroup of a simple group represented by these graphs~\cite{MR389656}.

Babai also proved there are strong restrictions for the automorphisms of strongly regular graphs~\cite{MR3359489}. However, no structural description of groups represented by graphs of bounded Hadwiger number has been available.
Progress towards this was made independently by Babai~\cite{DBLP:journals/jgt/Babai91} and Thomassen~\cite{MR1040045}. They investigated
vertex-transitive graphs of bounded Hadwiger number and in particular showed that for~$g\geq 3$ there are only finitely many vertex-transitive graphs of genus~$g$.

Regarding graphs of bounded Hadwiger number, one of Babai's central theorems shows that there is a function~$f$ so that almost all finite vertex-transitive graphs of Hadwiger number at most~$h$ can be embedded on the torus or are~$(f(h),f(h))$-ring-like. The latter means that there is a system of blocks of imprimitivity each of size at most~$f(h)$ which has a circular ordering and edges only connect blocks of distance~$f(h)$ in this circular ordering.
This theorem is mentioned as early as 1993. While a formal proof has not appeared, the theorem has been mentioned in various publications (\cite{MR1227125,MR1373683,CameronSurvey,1110.4885}\footnote{Reference~\cite{1110.4885} is the extended arXiv version of a paper~\cite{DBLP:journals/endm/DeVosM06}. In there, the authors also claim that their results can be used to prove Babai's theorem. They refer to a paper ``A quantitative strengthening of Babai's theorem'' in preparation that does not seem to have appeared.}) often with sketches of the overall idea\footnote{We thank Laci Babai for explaining the proof idea to us and providing various pointers to the literature at the ``Symmetry vs Regularity --- 50 Years of Weisfeiler-Leman Stabilization'' conference in Pilsen in 2018.}. In any case, it seems to have been already clear at the time (see~\cite{MR1227125}) that characterizations for edge-transitive rather than vertex-transitive graphs are required in order to obtain overall structure theorems for entire graph classes.

In this paper, we determine the structure of the automorphism group of finite graphs of bounded Hadwiger number. We do so by first proving the following result for edge-transitive graphs.

\begin{res}
For connected, $K_{h+1}$-minor-free, edge-transitive, twin-free, finite graphs the non-abelian composition factors of the automorphism group have bounded order (see Theorem~\ref{thm:edge:transitive:twin:free:comp:fac:bound}).
\end{res}

This allows us to determine the automorphism group structure of bounded Hadwiger number graphs as follows.
\begin{res}

 The automorphism groups of finite graphs of bounded Hadwiger number are obtained by repeated group extensions using abelian groups, symmetric groups and groups of bounded order (see Theorem~\ref{thm:kh:minor:free:means:Theta:group}).
\end{res}

Our structure theorem resolves three of Babai's long-standing conjectures stated already in Babai's 1981 survey on the abstract group of automorphisms~\cite{MR633647}.

First, regarding composition factors, Babai conjectured the following.
\begin{conj}\label{babai:conj:1}
There is a function~$f$ such that a composition factor of the automorphism group of a finite graph of Hadwiger number~$h$ is cyclic, alternating, or has order at most~$f(h)$. 
\end{conj}

Second, regarding representability of simple groups, Babai's \emph{subcontraction conjecture} states the following:
\begin{conj}\label{babai:conj:2}
Only finitely many non-cyclic simple groups are represented by finite graphs of bounded Hadwiger number. \end{conj}
In fact, he anticipated that alternating composition factors can only appear within their corresponding symmetric factors. This is indeed the case.

Finally, third, Babai conjectured the absence of small prime factors in the automorphism group order has an impact on the possible structure of the group as follows.
\begin{conj}\label{babai:conj:3}
There is a function~$f$ with the following property.
If the order of the automorphism group of a finite graph of Hadwiger number~$h$ does not have prime factors smaller than~$f(h)$, then the automorphism group is obtained by forming repeated direct products and wreath products of abelian groups.
\end{conj}

All the conjectures follow fairly directly in the affirmative from our structural theorem.

\begin{cor}
Conjectures~\ref{babai:conj:1},~\ref{babai:conj:2}, and~\ref{babai:conj:3} are true. 
\end{cor}

Regarding our proof, we combine techniques from various
areas of graph and group theory, as well as geometry. In particular, we transfer Babai's
theorem mentioned above from vertex-transitive graphs
to edge-transitive graphs, following his approach involving sphere
packings, infinite rooted limit graphs and Archimedean tilings. We
also exploit submodularity arguments for separations and graph
covering maps.

We should highlight that previous results of this sort were neither known for graphs of bounded Hadwiger number nor for prominent special cases such as graphs of bounded treewidth and graphs of bounded genus.

\textbf{Structure of the paper.} Following the preliminaries (Section~\ref{sec:prelims})
we discuss the structure of infinite edge-transitive graphs that have two ends (Section~\ref{sec:twoEnds}). We then analyze the automorphism group structure of finite edge-transitive graphs~(Section~\ref{sec:finite:edge:trans}). We characterize these groups~(Theorem~\ref{thm:edge:transitive:twin:free:comp:fac:bound}) by treating separately the cases of 
\begin{itemize}\setlength\itemsep{-0.2em}
\item one end (Subsection~\ref{subsec:one:end}) using infinite planar graphs and Archimedean tilings,
\item infinitely many ends (Subsection~\ref{subsec:infinitely:many:ends}) using sphere packings, and
\item two ends (Subsection~\ref{subsec:two:ends}) using the structural results from Section~\ref{sec:twoEnds} and graph coverings.
\end{itemize} 

The results are combined (Subsection~\ref{subsec:comb:results}) to prove Theorem~\ref{thm:edge:transitive:twin:free:comp:fac:bound}. We then discuss how the results for edge-transitive can be used to treat the general case of more than one edge orbit~(Section~\ref{sec:multiple:edge:orbits}) yielding a general structure result for the automorphism group of finite graphs with bounded Hadwiger number (Theorem~\ref{thm:kh:minor:free:means:Theta:group}).
We finally use this theorem to prove Babai's conjectures (Section~\ref{sec:babai:conj}) and conclude (Section~\ref{sec:conclusion}).

\section{Preliminaries}\label{sec:prelims}

\paragraph{Graphs}

Unless stated otherwise, we consider undirected graphs $G=(V(G),E(G))$ consisting of a \emph{vertex set} $V(G)$ and an \emph{edge set} $E(G)\subseteq\{\{v,w\}\subseteq V(G)\mid v\neq w\}$.
An edge $\{v,w\}\in E(G)$ is also denoted $vw$.
For $n\in\NN$, we write $[n]\coloneqq\{1,\ldots,n\}$.
The \emph{distance} between $v,w\in V(G)$ in~$G$, denoted $\dist_G(v,w)$, is the length (number of edges) of a shortest path from~$v$ to~$w$.
For a vertex $v\in V(G)$, the \emph{(closed) ball centered at $v$ with radius~$t$}
is defined as $B_{t,G}(v)\coloneqq\{w\in V(G)\mid\dist_G(v,w)\leq t\}$.
The \emph{(open) neighborhood} of a vertex $v\in V(G)$ is defined as $N_G(v)\coloneqq\{w\in V(G)\mid vw\in E(G)\}$.
The \emph{(open) neighborhood} of a subset $A\subseteq V(G)$ is defined as $N_G(A)\coloneqq\bigcup_{v\in A}N_G(v)\setminus A$.
Two vertices $v,v'\in V(G)$ are called \emph{(true) twins} in $G$
if $N_G(v)=N_G(v')$, and a graph $G$ is called \emph{twin-free} if there are no distinct vertices $v\neq v'$
in $G$ that are twins.
The \emph{degree} of a vertex $v\in V(G)$ is denoted by $\deg_G(v)\coloneqq|N_G(v)|$.
A graph $G$ is called \emph{locally finite} if $\deg_G(v)$ is finite
for all vertices $v\in V(G)$.
A graph is \emph{$d$-regular} for~$d\in \mathbb{N} $, if all vertices have degree $d$ and \emph{regular} if it is~$d$-regular for some~$d$.
Similarly, a bipartite graph with bipartition $V_1,V_2$ is called \emph{$(d_1,d_2)$-biregular}
if for each~$i\in\{1,2\}$ all vertices in $V_i$ have degree $d_i$.
It is \emph{biregular} if it is $(d_1,d_2)$-biregular for some~$d_1$ and~$d_2$.
For a graph $G$,
the \emph{induced subgraph} on a subset $S\subseteq V(G)$, denoted by $G[S]$,
is the graph with vertex set $S$ and edge set $\{e\in E(G)\mid e\subseteq S\}$.
Similarly, the \emph{induced bipartite subgraph} on two disjoint subsets $V_1,V_2\subseteq V(G)$, denoted by $G[V_1,V_2]$,
is defined as the bipartite graph with bipartition $V_1,V_2$ and edge set $E(G[V_1,V_2])\coloneqq\{v_1v_2\in E(G)\mid v_1\in V_1, v_2\in V_2\}$.
We write $G-S$ to denote the induced subgraph $G[V(G)\setminus S]$.
A \emph{separator} of a connected graph is a subset of the
vertices~$S\subseteq V(G)$ for which~$G-S$ is disconnected.
On~$n$ vertices, the \emph{complete graph} is denoted $K_n$, the
\emph{cycle} $C_n$, and the path $P_n$. The complete bipartite graph on~two parts of order~$n$ is denoted~$K_{n,n}$. The \emph{Cartesian product}
of two graphs $G$ and~$H$, denoted $G\square H$,
is the graph with vertex set $V(G\square H)\coloneqq V(G)\times V(H)$
and edge set $\{(v_1,v_2)(w_1,w_2)\mid v_1w_1\in E(G)\wedge v_2=w_2$ or
$v_1=w_1\wedge v_2w_2\in E(H)\}$.
A \emph{vertex-colored graph} is a pair $G_\chi=(G,\chi)$
consisting of a graph~$G$ 
and a function $\chi\colon V(G)\to C$, called \emph{vertex coloring}, that assigns to each vertex an
element in $C$, called the \emph{color} of that vertex.

For a function $\phi$ with domain $U$ and an element $u\in U$, we usually denote the image $\phi(u)$ by $u^\phi$,
and similarly for a subset $S\subseteq U$, we write $S^\phi\coloneqq\{\phi(u)\mid u\in S\}$.
For this reason, we compose functions $\phi\colon U\to V,\psi\colon V\to W$ from left to right, i.e.,
$u^{\phi\psi}=(u^\phi)^\psi$.
Two graphs $G,H$ are \emph{isomorphic}
if there is a bijection $\phi\colon V(G)\to V(H)$
such that $vw\in E(G)$ if and only if $v^\phi w^\phi\in
E(H)$.
In this case, the bijection $\phi$ is called an \emph{isomorphism} from $G$ to $H$.
The \emph{automorphism group} of a graph, denoted $\aut(G)$, is the
group of isomorphisms from~$G$ to itself.
A graph $G$ is \emph{vertex-transitive}
if $\aut(G)$ is transitive.
A graph is \emph{edge-transitive}
if $\aut(G)$ acts transitively on $E(G)$, i.e.,
for all pairs of edges $vw,v'w'\in E(G)$ there is an automorphism $\sigma\in\aut(G)$
such that $v^\sigma w^\sigma=v'w'$ (but not necessarily $(v^\sigma,
w^\sigma)=(v',w')$).
Note that an edge-transitive graph without isolated vertices
is either vertex-transitive or bipartite with
the automorphism group acting transitively on both parts of the bipartition.
Thus, each edge-transitive graph is \emph{almost vertex-transitive}, i.e.,
the automorphism group has finitely many vertex orbits.

The notion of isomorphisms and automorphisms can naturally be extended to bipartite graphs (where the parts may not be interchanged)
and directed graphs, as well as vertex-colored graphs.
For vertex-colored graphs, an isomorphism has to preserve the vertex coloring in addition to the edge relation, i.e.,
an isomorphism $\phi$ also has to satisfy $\chi(v)=\chi(v^\phi)$ for all vertices $v$.

While our theorem is concerned with finite graphs, the proof involves infinite graphs. They arise as limits of finite graphs of bounded maximum degree. Various properties of the finite graphs, such as edge-transitivity, transfer to infinite graphs. Since we are interested in finite separators of the infinite graph, we discuss the concept of ends next.

A \emph{ray} in an infinite connected graph $G$ is a one-way infinite path $v_1,v_2,v_3,\ldots$
(in which each vertex appears at most once).
We define an equivalence relation on the set of rays, in which two rays $r,r'$ are equivalent if for every finite separator~$S$ there is a connected component of $G-S$ all but finitely many
vertices from $r$ and almost all vertices from $r'$.
The equivalence classes of this relation are called the \emph{ends} of the graph $G$.
It is known (see~\cite{hopf1943enden,DBLP:conf/soda/Babai97})
that infinite, connected, locally finite and
almost vertex-transitive graphs have either one, two or infinitely many ends.
A subset~$C$ of the vertices of~$G$ \emph{contains} a particular end, if for every ray~$r$ of the end only finitely many vertices of~$r$ are not contained in~$C$. Note that for a finite set~$S$ every end of~$G$ is contained in some connected component of~$G-S$. Conversely, if~$G$ is a connected and locally finite graph then for a finite set of vertices~$S$ every infinite component of~$G-S$ contains at least one end.
A finite set~$S$ of vertices \emph{separates two ends} if the ends are contained in different connected components of~$G-S$.

\paragraph{Graph Minors}
Let $\CB$ be a partition of $V(G)$ such that $G[B]$ is connected for all $B\in\CB$.
We define $G/\CB$ to be the graph with vertex set $V(G/\CB) \coloneqq \CB$ and
$E(G/\CB) \coloneqq \{BB' \mid \exists v \in B, v' \in B' \colon vv' \in E(G)\}$.
A graph $H$ is a \emph{minor} of $G$ if there is a partition $\CB$ into connected subsets, called \emph{branch sets}, such that $H$ is isomorphic to a subgraph $H_\CB$ of~$G/\CB$.
In this case, an isomorphism $\phi\colon V(H)\to V(H_\CB)\subseteq 2^{V(G)}$ is called a \emph{minor model}
of $H$ in $G$.
A minor is called \emph{$\aut(G)$-invariant}  (or just invariant)
if there is a minor model $\varphi\colon V(H)\rightarrow 2^{V(G)}$
so that~$V(H)^\phi$ and $E(H)^\phi$ are both~$\aut(G)$-invariant
where $E(H)^\phi\coloneqq\{v^\phi w^\phi\mid vw\in E(H)\}$.
A graph $G$ \emph{excludes $H$ as a minor} if $H$ is not a minor of $G$.
The \emph{Hadwiger number} of a graph $G$, denoted by $\Had(G)$, is the largest number $h\in\NN$ such that the complete
graph with $h$ vertices is a minor of $G$.
Equivalently, the Hadwiger number of a graph $G$ is the smallest number $h\in\NN$ such that $G$
excludes the complete graph $K_{h+1}$ as a minor.

\paragraph{Groups} We refer to~\cite{MR1409812} for basics on permutation groups. We use capital Greek letters to denote groups except for the \emph{symmetric group} on~$[d]$, which we denote by~$S_d$. The notation~$\Psi\trianglelefteq \Delta$ indicates that~$\Psi$ is a \emph{normal subgroup} of~$\Delta$. A \emph{block} of a permutation group on~$\Delta$ on a set~$V$ is a set~$B\subseteq V$ such that for all~$\delta\in \Delta$ we have~$B^\delta=B$ or~$B^\delta\cap B= \emptyset$.

If~$\Delta$ is a permutation group on a set~$V$ and~$B\subseteq V$ is invariant under~$\Delta$, then the group \emph{induced by~$\Delta$ on~$B$} is the group of permutations of~$B$ that are restrictions of elements of~$\Delta$. The \emph{wreath product} of a base group~$\Delta$ with a top group~$\Psi$ with respect to the product action is denoted~$\Delta\wr \Psi$. A permutation group is \emph{semi-regular} if only the identity has a fixed point. If it is additionally transitive, then it is \emph{regular}.

\paragraph{Restricted Group Classes}

Let~$\Gammad$ be the smallest class of groups satisfying the following properties:

\begin{enumerate}
\item the trivial group is in $\Gammad$,
\item $\Gammad$ is closed under taking extensions of subgroups of~$S_d$ (i.e., if~$\Psi\trianglelefteq \Delta$ with~$\Psi \in \Gammad$ and~$\Delta/\Psi \leq S_d$, then~$\Delta\in\Gammad$),
\item $\Gammad$ is closed under taking extensions of cyclic groups (i.e., if~$\Psi\trianglelefteq \Delta$ with~$\Psi \in \Gammad$ and~$\Delta/\Psi$ is cyclic, then~$\Delta\in\Gammad$).
\end{enumerate}

For convenience, in this paper within $\Gammad$ we explicitly allow extensions of abelian infinite groups by $\Gammad$-groups rather than only finite groups. This simplifies some of our arguments that deal with infinite graphs.
Another way of describing finite groups in~$\Gammad$ is that they are groups whose non-abelian composition factors are subgroups of~$S_d$.

Let~$\Thetad$ be the smallest class of groups satisfying the following properties:

\begin{enumerate}
\item the trivial group is in $\Thetad$,
\item $\Thetad$ is closed under taking direct products (i.e., if $\Delta,\Delta'\in \Thetad$,
    then $\Delta\times\Delta'\in \Thetad$)
\item $\Thetad$ is closed under taking wreath products with
  symmetric groups as top group (i.e., if~$\Delta \in \Thetad$ and $t\in\NN$ is arbitrary,
  then~$\Delta \wr S_t\in \Thetad$),
\item $\Thetad$ is closed under taking extensions of groups in~$\Gammad$ (i.e., if~$\Psi\trianglelefteq \Delta$ with~$\Psi \in\Thetad$ and~$\Delta/\Psi \in \Gammad$, then~$\Delta\in\Thetad$).
\end{enumerate}

Thus, the non-abelian composition factors of groups in~$\Thetad$ are subgroups of~$S_d$ or alternating groups. However, there is a restriction on how alternating groups may appear. Intuitively, alternating groups that are not subgroups of~$S_d$ can only arise if the respective symmetric group is present.

\paragraph{Rooted pointwise limits}

In this section, we recapitulate the limit constructions for rooted graphs described in~\cite{DBLP:journals/jgt/Babai91}. This also gives us the chance to verify that they apply not only to limits of vertex-transitive graphs but also to limits of edge-transitive graphs.

Let~$U$ be a set. A sequence~$X=(X_i)_{i\in\NN}$ of subsets of~$U$ converges
pointwise if for each~$x\in U$ there is an~$n_0$ so that for all~$n\geq n_0$ we have~$x\in X_n$
if and only if~$x \in X_{n_0}$. The limit
of~$X$ is the set of those~$x$ for which~$x\in X_n$ for all but
finitely many~$n$.

We will consider connected ordered graphs~$G$, that is, graphs whose vertex set is precisely the set~$\{1,\ldots,n\}$ for some integer~$n$. The vertex~$1$ can also be thought of as the \emph{root} of the graph. Following~\cite{DBLP:journals/jgt/Babai91}, we require that the vertices are ordered according to a breadth-first search traversal (\emph{BFS-labeled}), which means that for vertices~$i,j$ with~$i<j$ we have~$\dist_G(1,i)\leq \dist_G(1,j)$. 

Let~$(G_i)_{i\in\NN}$ be a sequence of finite BFS-labeled graphs. The graph~$G$ is the pointwise limit of the sequence if~$V(G)$ and~$E(G)$ are the pointwise limits of~$(V(G_i))_{i\in\NN}$ and~$(E(G_i))_{i\in\NN}$, respectively.

It follows from the
compactness principle (or directly from Tychonoff's Theorem) that
every sequence of graphs whose vertex set is a subset of the
natural numbers has a convergent subsequence.

\begin{lem}\label{lem:limitGraphs}
Let~$(G_j)_{j\in\NN}$ be a convergent sequence of connected finite, BFS-labeled, edge-transitive graphs of maximum degree at most~$d$. Let~$\limgraphG$ be the limit graph. Then
\begin{enumerate}
\item for every~$t\in \NN$ there is an~$n_t$ so that for all~$n\geq n_t$ the following holds: for every vertex~$v\in V(G_n)$ there is an isomorphism from $G_n[B_{t,G_n}(v)]$ to $\limgraphG[B_{t,\limgraphG}(x)]$ mapping~$v$ to~$x$ for some~$x\in \{1,2\}$.\label{convergence:lemma:part:balls}
\item $\limgraphG$ is connected and edge-transitive.\label{convergence:lemma:part:edge:transitive} 
\end{enumerate}
\end{lem}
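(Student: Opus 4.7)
The plan is to handle both parts jointly by exploiting that BFS-labeling confines all ``early'' structure to a bounded initial segment of labels, together with the fact (from the preliminaries) that an edge-transitive graph without isolated vertices has at most two vertex orbits, represented by any given edge.

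For part~\ref{convergence:lemma:part:balls}, the starting point is that in any BFS-labeling vertex~$2$ is a neighbor of vertex~$1$, so $\{1,2\}\in E(G_n)\subseteq E(\limgraphG)$. Consequently, every $v\in V(G_n)$ lies in the $\Aut(G_n)$-orbit of either $1$ or $2$: if $G_n$ is vertex-transitive, every vertex is in the orbit of~$1$; in the bipartite case, $1$ and $2$ are adjacent and hence in distinct parts. Bounded degree ensures that $B_{t+1,G_n}(1)$ has size at most some constant $M$ depending only on~$t$ and~$d$, and BFS-labeling gives $B_{t+1,G_n}(1)=[|B_{t+1,G_n}(1)|]\subseteq[M]$. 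Pointwise convergence makes $G_n[[M]]$ stabilize to $\limgraphG[[M]]$ for large~$n$. Since any shortest path from~$1$ to a vertex at distance $\leq t+1$ uses only vertices of distance $\leq t+1$ from~$1$, these distances are determined by $G_n[[M]]$ and hence stabilize, yielding $B_{t,G_n}(x)=B_{t,\limgraphG}(x)$ together with equality of induced subgraphs for $x\in\{1,2\}$ and all sufficiently large~$n$. The orbit observation then supplies, for any $v\in V(G_n)$, some $\sigma\in\Aut(G_n)$ with $v^\sigma\in\{1,2\}$, which restricts to the required isomorphism.

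For connectedness in part~\ref{convergence:lemma:part:edge:transitive}, if $V(\limgraphG)$ is finite then $\limgraphG=G_n$ for all large~$n$; otherwise $|V(G_n)|\to\infty$, and since each $G_n$ is connected the balls $B_{s,G_n}(1)$ eventually exhaust $V(G_n)$, so the stabilized ball sizes $|B_{s,\limgraphG}(1)|$ cannot remain bounded in~$s$. Hence every $m\in V(\limgraphG)$ satisfies $m\leq|B_{s,\limgraphG}(1)|$ for some~$s$, placing it in $B_{s,\limgraphG}(1)=[|B_{s,\limgraphG}(1)|]$. For edge-transitivity, pick $e=\{a,b\}\in E(\limgraphG)$; for large~$n$ we have $e\in E(G_n)$ and edge-transitivity of $G_n$ furnishes $\sigma_n\in\Aut(G_n)$ with $\{a,b\}^{\sigma_n}=\{1,2\}$. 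For each fixed $v\in V(\limgraphG)$, distance preservation gives $\dist_{G_n}(1,v^{\sigma_n})\leq 1+\dist_{G_n}(a,v)$, with the right-hand side uniformly bounded in~$n$ because $a$ and~$v$ lie in balls $B_{s,\limgraphG}(1)$ of fixed radius; by BFS, $v^{\sigma_n}$ is thus confined to a bounded set of labels. Symmetrically, $\sigma_n^{-1}(\{1,2\})=\{a,b\}$ bounds $v^{\sigma_n^{-1}}$ for each fixed~$v$. A diagonal argument over the countable set $V(\limgraphG)$ produces a subsequence along which both $\sigma_n$ and $\sigma_n^{-1}$ converge pointwise to maps $\sigma,\tau\colon V(\limgraphG)\to V(\limgraphG)$; passing to the limit in $\sigma_n\sigma_n^{-1}=\id$ and in the edge-preservation relation yields $\sigma\tau=\tau\sigma=\id$ and that $\sigma$ preserves edges, so $\sigma\in\Aut(\limgraphG)$ with $\{a,b\}^\sigma=\{1,2\}$.

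The main obstacle is the compactness step in the edge-transitivity argument: the $\sigma_n$ do not fix the root, so a priori $v^{\sigma_n}$ could drift to arbitrarily large labels. The key point is that mapping an \emph{edge} (and hence both its endpoints) into $\{1,2\}$ bounds not only $v^{\sigma_n}$ but also $v^{\sigma_n^{-1}}$, and this second control is precisely what yields bijectivity of the limit.
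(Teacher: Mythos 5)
Your proposal is correct and follows essentially the same approach as the paper's proof: both use the BFS-labeling together with the bounded degree to confine images of fixed vertices to a bounded set of labels, extract a convergent subsequence of automorphisms of the $G_n$ sending a fixed edge to $\{1,2\}$ (the paper uses the inverse direction, $\{1,2\}\to e$, which is equivalent), and pass to a limit automorphism. Your explicit control of both $\sigma_n$ and $\sigma_n^{-1}$ to secure bijectivity of the limit is a slightly more careful rendering of what the paper dispatches with the phrase ``induces locally an automorphism,'' but the argument is the same one.
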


\begin{proof}
We can assume without loss of generality that no graph~$G_n$ is
edgeless. This implies that~$\{1,2\}$ is an edge of each graph~$G_n$,
and thus of~$\limgraphG$.

We prove Part~\ref{convergence:lemma:part:balls}.
It follows from the limit construction and the fact that the graphs
are BFS-labeled that for each~$x\in \{1,2\}$ the subgraph of $G_n$ induced by ball~$B_{t,G_n}(x)$
converges to the subgraph of $\limgraphG$ induced by ball~$B_{t,\limgraphG}(x)$.
The statement now follows from edge-transitivity of the graphs~$G_n$.

We prove Part~\ref{convergence:lemma:part:edge:transitive}. Since the graphs~$G_j,j\in\NN$ are BFS-labeled, the limit graph~$\limgraphG$ is connected. Let~$e$ be
an edge of~$\limgraphG$. We show that there is an automorphism
mapping~$\{1,2\}$ to~$e$. For some~$n_0$ we have that for~$n\geq n_0$
the edge~$e$ also appears in~$G_n$.  For each~$n\geq n_0$ choose an
automorphism~$\varphi_n$ of $G_n$ mapping~$\{1,2\}$ to~$e$. Since the maximum
degree in~$G_n$ is bounded and the graphs are BFS-labeled, for
each~$v$, we can give a bound~$f(v)\in \NN$ independent of~$n$ so
that~$\varphi_n(v)\leq f(v)$. This implies that the sequence~$(\phi_n)_{n\geq n_0}$ has a
subsequence that converges to a function~$\varphi$.
The limit~$\varphi$ of this subsequence is an
automorphism of~$\limgraphG$ mapping~$\{1,2\}$ to~$e$ since it induces locally an
automorphism on the ball~$B_{t,G_{n_{g(t)}}}(1)$ for~$t$ arbitrarily large and some increasing function~$g$.
\end{proof}

As Babai does in~\cite{DBLP:journals/jgt/Babai91}, we also remark that the limit
construction can also be explained in terms of ultraproducts and
\L{}o\'s's Theorem (see, e.g., \cite[Chapter~9]{hod93}).

\section{Infinite edge-transitive graphs with two ends}\label{sec:twoEnds}

Following Babai's approach to analyze vertex-transitive graphs~\cite{DBLP:journals/jgt/Babai91}, we investigate the structure of large edge-transitive graphs by considering infinite limit graphs. We then draw conclusions about finite graphs from the properties of the infinite graphs.
Being almost vertex-transitive, the limit graph has one, two or infinitely many ends. These cases can be studied separately.
In~\cite{DBLP:journals/jgt/Babai91}, the most interesting case is that of one end and most results transfer fairly directly to the edge-transitive case. However, for edge-transitivity, the two-ended case turns out to be significantly more interesting and involved, so we study this case separately and first.
Throughout the section, we suppose~$G$ is a connected, locally finite, edge-transitive graph with two ends.
The \emph{connectivity between the two ends} is the minimum number of vertices in a separator separating the two ends.

\begin{lem}[Halin~\cite{MR270953}]\label{lem:paths}
If an infinite locally finite graph with two ends has connectivity~$k$ between the two ends,
then there are~$k$ vertex-disjoint bidirectionally infinite paths connecting the two ends.
\end{lem}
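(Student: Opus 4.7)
The plan is to combine Menger's theorem with a compactness argument exploiting local finiteness. Since~$G$ has two ends~$\omega_1,\omega_2$ with connectivity~$k$ between them, fix a finite separator~$S$ of~$\omega_1$ from~$\omega_2$ and two rays~$R_1,R_2$ converging to~$\omega_1,\omega_2$, respectively. For each~$n\in\NN$ pick vertices~$u_n\in R_1$ and~$w_n\in R_2$ that lie along their rays at distance at least~$n$ from~$S$. The first observation is that any vertex separator~$T$ of~$u_n$ from~$w_n$ in~$G$ must have size at least~$k$: if~$|T|<k$, then~$T$ fails to separate the two ends of~$G$; consequently some tail of~$R_1$ beyond~$u_n$ and some tail of~$R_2$ beyond~$w_n$ avoid~$T$ and are joined by a finite path in~$G-T$, whence~$u_n$ and~$w_n$ remain connected in~$G-T$, a contradiction.

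Given this local connectivity bound, an application of Menger's theorem (carried out in a sufficiently large finite induced subgraph containing~$u_n$, $w_n$, and a minimum~$u_n$-$w_n$ separator) produces~$k$ internally vertex-disjoint paths~$P_1^n,\ldots,P_k^n$ in~$G$ from~$u_n$ to~$w_n$.

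It remains to pass to the limit. Fix a reference vertex~$v_0$. By local finiteness each ball~$B_{t,G}(v_0)$ is finite, so the restriction of the system~$(P_1^n,\ldots,P_k^n)$ to~$B_{t,G}(v_0)$ takes only finitely many possible values. A standard diagonal selection, or equivalently K\"onig's lemma applied to the tree of consistent local stabilizations, extracts a subsequence along which, for every~$t\in\NN$ and every~$i\in[k]$, the intersection~$P_i^n\cap B_{t,G}(v_0)$ eventually stabilizes. Letting~$t\to\infty$ yields~$k$ pairwise vertex-disjoint subgraphs~$R_1^*,\ldots,R_k^*$ of~$G$. Each~$R_i^*$ inherits maximum degree at most~$2$ from the~$P_i^n$ and is locally connected, so it is a disjoint union of finite paths, one-way infinite rays, and double rays. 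Because~$u_n$ escapes to~$\omega_1$ and~$w_n$ escapes to~$\omega_2$, for every finite separator~$S'$ of the two ends the paths~$P_i^n$ cross~$S'$ in both directions once~$n$ is large enough, which forces each~$R_i^*$ to contain one bidirectionally infinite path with one tail converging to~$\omega_1$ and the other to~$\omega_2$; any finite debris can be discarded.

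The main obstacle is the bookkeeping in the compactness step: one must verify that the stabilized local fragments glue into genuine double rays rather than degenerating into collections of finite paths and single rays, and that the two tails of each resulting component lie in different ends. Both points are secured by the fact that, within any fixed ball~$B_{t,G}(v_0)$ and for all sufficiently large~$n$, each~$P_i^n$ traverses the ball in a bounded number of arcs that connect the~$\omega_1$-side to the~$\omega_2$-side of every finite separator meeting the ball; stabilizing these arcs for fixed~$t$ and then exhausting by~$t\to\infty$ delivers the desired~$k$ vertex-disjoint double rays from~$\omega_1$ to~$\omega_2$.
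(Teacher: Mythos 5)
The paper cites this lemma to Halin and does not reprove it, so the comparison is with correctness rather than with a proof in the text. Your overall strategy (local Menger plus a compactness/diagonalization step, which is essentially the standard route to Halin's theorem) is sound, but the pivotal lower bound on the local separator size has a genuine gap.

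You assert that every $u_n$--$w_n$ separator $T$ has $|T|\geq k$, arguing that if $|T|<k$ then $T$ fails to separate the ends, so tails of $R_1$ and $R_2$ beyond $u_n,w_n$ avoid $T$ and are joined in $G-T$, ``whence $u_n$ and $w_n$ remain connected.'' That last inference does not follow: the segment of $R_1$ between $u_n$ and the tail that avoids $T$ may itself pass through $T$, so $u_n$ can sit in a finite ``pocket'' of $G-T$ even though $T$ does not separate the two ends. This can in fact occur for arbitrarily large $n$. For instance, take $k=3$: three parallel double rays $p_i,q_i,r_i$ with triangular rungs, and for each $j\geq 1$ add a path $p_{-2j}\sim a_{-2j}\sim b_{-2j}\sim p_{-2j+1}$. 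The connectivity between the ends is still $3$, and the ray $R_1=p_0,p_{-1},b_{-2},a_{-2},p_{-2},p_{-3},b_{-4},a_{-4},p_{-4},\ldots$ converges to the left end; yet each $a_{-2j}\in R_1$ is separated from the right end by the $2$-element set $\{p_{-2j},p_{-2j+1}\}$, so with $u_n=a_{-2j}$ (which lies on $R_1$ at arbitrarily large distance from any fixed $S$) your Claim, and hence the Menger step, fails. The standard fix is to apply Menger not between single vertices but between the distance-$n$ spheres $A_n\coloneqq\{v\in S^L:\dist(v,S)=n\}$ and $B_n\coloneqq\{v\in S^R:\dist(v,S)=n\}$ around a fixed minimum end-separator $S$: any $A_n$--$B_n$ separator $T$ must meet every path joining the far-left to the far-right (since such a path, passing through $S$, necessarily crosses levels $n$ on both sides), hence $T$ actually separates the two ends and $|T|\geq k$. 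With that replacement your compactness step goes through and delivers the $k$ disjoint double rays.
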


A connected graph $G$
is called a \emph{strip}
if there exists a connected set $C\subseteq V(G)$
and an automorphism $\psi\in\aut(G)$
such that $S\coloneqq N(C)$ is non-empty and finite,
$\psi(C\cup S)\subseteq C$,
and $C\setminus\psi(C)$ is finite.
In \cite{DBLP:journals/combinatorica/Jung81}, it is shown
that connected, locally finite, vertex-transitive graphs with two ends
are strips.
This also holds for edge-transitive instead of vertex-transitive graphs.

\begin{lem}\label{lem:translation}
Let $G$ be a connected, locally finite and edge-transitive graph with two ends.
Then, $G$ is a strip.
\end{lem}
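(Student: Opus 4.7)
The plan is to reduce to Jung's theorem for vertex-transitive graphs via a case distinction. If $G$ is vertex-transitive, the claim follows immediately from \cite{DBLP:journals/combinatorica/Jung81}. Otherwise, since an edge-transitive graph without isolated vertices is either vertex-transitive or bipartite with $\aut(G)$ acting transitively on each part (as noted in the preliminaries), $G$ is bipartite with bipartition $V_1\cup V_2$. Both parts are infinite because $G$ is locally finite and has two ends.

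To handle the bipartite case, I would pass to the auxiliary graph $G'$ on vertex set $V_1$ whose edges are the pairs $uv$ with $\dist_G(u,v)=2$. Routine checks show that $G'$ is connected, locally finite, and has exactly two ends, via the bijection sending a $G$-ray to its $V_1$-subsequence (consecutive $V_1$-vertices in such a ray are automatically at $G$-distance $2$). The subgroup $H\coloneqq\aut(G)|_{V_1}$ of $\aut(G')$ acts transitively on $V(G')=V_1$, and $H$ is closed in $\aut(G')$ in the pointwise convergence topology, since $\aut(G)$ is compact by local finiteness.

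I would then apply Jung's theorem to the action of $H$ on $G'$. The original proof constructs the required translation automorphism as a pointwise limit of elements of the acting group via a compactness argument, so it carries over to any closed vertex-transitive subgroup such as $H$. The result is an element $\psi\in H$, which lifts to an automorphism of $G$ preserving $V_1$ setwise, together with a $G'$-connected set $C'\subseteq V_1$ for which $S'\coloneqq N_{G'}(C')$ is non-empty and finite, $\psi(C'\cup S')\subseteq C'$, and $C'\setminus\psi(C')$ is finite. Setting $C\coloneqq C'\cup N_G(C')$, I would verify the four strip conditions in $G$: $G[C]$ is connected since $G'$-paths in $C'$ lift through the $V_2$-vertices in $N_G(C')\subseteq C$; $N_G(C)=S'$ is non-empty and finite, because bipartiteness ensures that $V_2$-neighbors of $C$ already lie in $N_G(C')\subseteq C$, while $V_1$-neighbors of $C$ are at $G$-distance $2$ from $C'$ and hence in $C'\cup S'$; the inclusion $\psi(C\cup S')\subseteq C$ follows from $\psi(C'\cup S')\subseteq C'$ together with $\psi\in\aut(G)$, which yields $\psi(N_G(C'))=N_G(\psi(C'))\subseteq N_G(C')$; finally $C\setminus\psi(C)\subseteq (C'\setminus\psi(C'))\cup N_G(C'\setminus\psi(C'))$ is finite by local finiteness of $G$.

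The main obstacle will be ensuring that Jung's translation automorphism can be chosen inside $\aut(G)|_{V_1}$ rather than just $\aut(G')$. This is handled by the closed-subgroup observation above: the compactness argument inside Jung's proof produces $\psi$ as a pointwise limit of group elements moving a fixed vertex toward one end, and restricting these approximants to elements of the closed subgroup $H$ forces the limit to lie in $H$ and hence to extend to an automorphism of $G$.
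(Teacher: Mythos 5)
Your approach is genuinely different from the paper's. The paper does not try to reduce to the vertex-transitive case; instead it directly adapts the core of Jung's argument to the edge-transitive setting. Concretely, the paper fixes a minimum separator~$S$ between the two ends with diameter~$D$, picks a vertex $v\in S$, and uses edge-transitivity together with local finiteness (so that the orbit of~$v$ is infinite and contains vertices arbitrarily far from~$v$) to find automorphisms~$\psi,\psi'$ moving~$v$ far into each of the two sides, so that $S\cap S^{\psi}=\emptyset=S\cap S^{\psi'}$. A short case distinction on whether $\psi$ or $\psi'$ interchanges the ends (taking $\psi\psi'$ if both do) then directly produces the required translation and the set~$C$. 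This is fully self-contained and uses Jung's paper only as a source of the idea, not as a black box.

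There are two problems with your proposal. The central one: you need the translation automorphism produced by Jung's theorem to lie in the subgroup $H=\aut(G)|_{V_1}$ of $\aut(G')$, not merely in $\aut(G')$. You assert that Jung's proof ``carries over to any closed vertex-transitive subgroup,'' but this is a nontrivial claim about the internal structure of another paper's argument, and you do not verify it. The theorem as stated only guarantees $\psi'\in\aut(G')$, and there is no a priori reason such a $\psi'$ must restrict from an automorphism of~$G$. If the stronger form were needed, it would have to be proved; simply asserting that Jung's compactness argument can be carried out inside a closed transitive subgroup is not enough. (The paper sidesteps this completely, which is part of why its direct adaptation is preferable.)

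The second, smaller, problem: the justification ``$\aut(G)$ is compact by local finiteness'' is false. Automorphism groups of connected locally finite graphs are locally compact -- point stabilizers are compact -- but not compact in general; for example, $\aut(P_\infty)$ is the infinite dihedral group, which is discrete and infinite. Closedness of~$H$ in $\sym(V_1)$ can still be established: lift a convergent sequence $\phi_n\in H$ to $\bar\phi_n\in\aut(G)$; for large~$n$ these lie in a fixed coset of a point stabilizer, which is compact, so a subsequence converges to a lift of the limit. But your stated reason for closedness is wrong as written. Finally, your verification of the strip conditions for $C=C'\cup N_G(C')$ is essentially correct, though the claim that $G'$ has exactly two ends should be justified, e.g.\ by observing that the inclusion $V_1\hookrightarrow V(G)$ is a quasi-isometry (bipartiteness gives $\dist_G(u,v)=2\dist_{G'}(u,v)$ on~$V_1$, and every vertex of~$G$ is within distance~$1$ of~$V_1$), so $G$ and $G'$ have the same number of ends.
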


\begin{proof}
We follow an idea similar to the proof of \cite[Theorem 1]{DBLP:journals/combinatorica/Jung81} for vertex-transitive graphs.
Let $S$ be a minimum separator separating the two ends.
Let $S^L$ and $S^R$ be the two infinite connected components of $G-S$
(containing the ends).

Let $D\geq 0$ be the diameter of $S$, i.e., the maximum distance of vertices $v,v'\in S$ measured in~$G$.
Since $G$ is edge-transitive and locally finite, we can pick a vertex $v\in S$ and an automorphism
$\psi\in\aut(G)$ such that $v^{\psi}\in S^L$ and $\dist(v,v^\psi)>2D$.
Then, it holds that $S\cap S^\psi=\emptyset$.
Similarly, we can pick a second automorphism
$\psi'$ such that $v^{\psi'}\in S^R$ and $\dist(v,v^{\psi'})>2D$.
\begin{claim}
There is an automorphism $\phi\in\{\psi,\psi',\psi\psi'\}$
such that $(C\cup S)^\phi\subseteq C$ for some $C\in\{S^L,S^R\}$.
\end{claim}
\begin{claimproof}
If $\psi$ or $\psi'$ does not interchange the two ends, then we are done since $S\cap S^\phi=\emptyset$
for both $\phi\in\{\psi,\psi'\}$.
Thus, assume that both $\psi,\psi'$ interchange the two ends,
i.e., $(S^R\cup S)^\psi\subseteq S^L$ and $(S^L\cup S)^{\psi'}\subseteq S^R$.
In this case, it holds that $(S^R\cup S)^{\psi\psi'}\subseteq (S^L)^{\psi'}\subseteq (S^L\cup S)^{\psi'}\subseteq S^R$.
\end{claimproof}

Note that $N(C)$ is finite
since $N(C)=S$ is a minimum finite separator,
and note that $C\setminus\psi(C)$ is finite since $G$ has two ends.
Thus, the claim completes the proof of the lemma.
\end{proof}

For two finite separators~$S$ and~$S'$ separating the two ends, we let~$[S,S']$ be the set of vertices of~$G$ that do not lie in an infinite connected component of~$G-(S\cup S')$.

\begin{lem}\label{lem:boundedDiameter}
Let $G$ be a connected, locally finite and edge-transitive graph with two ends.
There is a constant~$D$ that bounds the diameter of every minimum separator $S$ separating the two ends (i.e.,~$\dist_G(v,v')\leq D$ for all~$v,v'\in S$).
\end{lem}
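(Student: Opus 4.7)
The plan is to exploit the strip structure from Lemma~\ref{lem:translation} to equip $V(G)$ with an integer coordinate and then to force every minimum separator into a bounded window of that coordinate; a bounded window immediately bounds the $G$-diameter.

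Invoking Lemma~\ref{lem:translation}, fix a connected $C\subseteq V(G)$ with $S_0\coloneqq N(C)$ finite and an automorphism $\psi$ with $\psi(C\cup S_0)\subseteq C$ and $F\coloneqq C\setminus\psi(C)$ finite. The translates $\{\psi^n(F):n\in\ZZ\}$ partition $V(G)$, so every vertex $v$ has a unique position $n(v)\in\ZZ$ with $v\in\psi^{n(v)}(F)$. Since $G$ is locally finite and $F$ is finite, each vertex in $F$ has finitely many neighbors, hence there is a finite bound $E$ on $|n(v)-n(w)|$ over all edges with an endpoint in $F$; by $\psi$-invariance of positions, this bound holds for every edge of $G$. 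Moreover, for each $W\in\NN$, the set $\bigcup_{i=0}^{W}\psi^i(F)$ is finite, of some finite $G$-diameter $D(W)$. Given any minimum separator $S$ of size $k$ between the two ends, replacing $S$ by a $\psi$-translate (which is again a minimum separator) lets me assume $\min_{v\in S}n(v)=0$, and I set $W\coloneqq\max_{v\in S}n(v)$. Then $S\subseteq\bigcup_{i=0}^{W}\psi^i(F)$, so $\Diam_G(S)\leq D(W)$, and it suffices to show that $W$ is bounded by a constant depending only on $G$.

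To bound $W$, I intend to use an uncrossing (submodularity) argument against the family of reference separators $\{\psi^n(S_0):n\in\ZZ\}$. Submodularity of minimum cuts states that if $S_1,S_2$ are minimum separators between the two ends with left infinite components $L_1,L_2$, then the boundary sets of $L_1\cap L_2$ and $L_1\cup L_2$ are also separators and $|S_1|+|S_2|\geq|\partial(L_1\cap L_2)|+|\partial(L_1\cup L_2)|$; provided each resulting side remains infinite, both uncrossed cuts are themselves minimum separators between the two ends. Choosing $S^{(N)}\coloneqq\psi^N(S_0)$ for $N$ so large that $S^{(N)}$ lies entirely to the right of $S$, iterating uncrossings between $S$ and well-chosen translates $\psi^n(S_0)$ shifts the position-range of $S$ without enlarging it, and combined with a pigeonhole count on the positions occupied by $S$, any $W$ exceeding a constant $W_0$ depending only on $|F|,E$ and $k$ admits a strict uncrossing, contradicting that $S$ is a minimum separator. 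An alternative route, should this path prove tricky, is to use Lemma~\ref{lem:paths} to obtain $k$ vertex-disjoint bi-infinite paths meeting $S$ in one vertex each and to build an $S$-avoiding path between the two ends through an intermediate layer occupied by few vertices of $S$.

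The hard part is verifying that the uncrossed cuts really separate the two ends, i.e., that neither side collapses to a finite set. In the two-ended setting both infinite components of $G-S$ each contain exactly one end, so intersections and unions of left infinite components shift at most which end lies on which side; but care is needed to ensure the opposite side stays infinite, and this is achieved by placing the reference translate $\psi^N(S_0)$ sufficiently far from $S$ in the strip, far compared to $|F|\cdot E$, so that the overlap region $[S,S^{(N)}]$ is large. Once $W\leq W_0$ is established, the uniform bound $\Diam_G(S)\leq D(W_0)$ follows, completing the proof.
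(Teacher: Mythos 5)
Your coordinate setup is sound: by Lemma~\ref{lem:translation} the translates of~$F=C\setminus\psi(C)$ do partition~$V(G)$ (indeed $S_0^\psi\subseteq F$, so $F\neq\emptyset$), the position-width~$W$ of a minimum separator controls its $G$-diameter by $\psi$-invariance, and reducing the lemma to bounding~$W$ is exactly the right normalization. The problem is the step that is supposed to actually bound~$W$.

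The submodularity route cannot produce the contradiction you are aiming for. If~$S$ and~$S'\coloneqq S_0^{\psi^N}$ are two minimum separators separating the two ends, and both uncrossed sets~$S^\cap$ and~$S^\cup$ still separate the two ends, then the inequality $|S^\cap|+|S^\cup|\le|S|+|S'|=2k$ together with minimality forces $|S^\cap|=|S^\cup|=k$, with equality. There is no ``strict uncrossing'': you never obtain a separator of size $<k$, hence no contradiction with minimality of~$S$, no matter how spread out~$S$ is or how far away you place the reference translate. (You correctly worry about whether both sides stay infinite, but that is not where the argument breaks; it breaks because the uncrossed cuts are themselves minimum, so minimality is never violated.) Uncrossing is the right tool for the paper's \emph{next} lemma (uniqueness of the leftmost minimum separator), but not for bounding the diameter.

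Your one-sentence ``alternative route'' is in fact what the paper does, and it is the idea you need to develop. Take~$k$ vertex-disjoint bi-infinite paths~$P_1,\dots,P_k$ from Lemma~\ref{lem:paths}; a minimum separator~$S$ meets each~$P_i$ in exactly one vertex. After passing to a suitable power of~$\psi$, the fundamental interval~$[S_0,S_0^\psi]$ can be assumed connected, so each translate interval~$I^\ell=[S_0^{\psi^\ell},S_0^{\psi^{\ell+1}}]$ is connected. If~$S$ met two intervals~$I^a$ and~$I^b$ with an intermediate~$I^c$ ($a<c<b$) that misses~$S$, then~$I^c$, being connected and disjoint from~$S$, lies entirely on one side of~$S$; but the path~$P_i$ carrying the~$S$-vertex on the far side must traverse~$I^c$ on its way to its end, and its portion beyond that vertex lies entirely on the opposite side of~$S$ --- a contradiction. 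Hence the intervals meeting~$S$ are contiguous, and since~$S$ has only~$k$ vertices there are at most~$k$ of them, giving the uniform window (the paper conservatively uses~$2k$). You should replace the uncrossing step with this path/contiguity argument; the rest of your write-up then goes through.
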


\begin{proof}
Fix some minimum cardinality separator $S_0$ separating
the two ends.
By \cref{lem:translation}, there is
an automorphism $\psi\in\aut(G)$ of infinite order.
By possibly replacing $\psi$ with a suitable power, we can assume that
$[S_0,S_0^\psi]$ is a subset that is connected (using only paths inside $[S_0,S_0^\psi]$).
Now, consider the intervals $\ldots,I^{-1},I^0,I^1,I^2,\ldots$
where $I^\ell\coloneqq[S_0^{\psi^\ell},S_0^{\psi^{\ell+1}}]$ for $\ell\in\ZZ$.
By \cref{lem:paths}, there are $k\coloneqq|S|$ vertex-disjoint paths connecting
the two ends. Note that each minimum separator $S$ contains exactly one vertex from each path.
Thus, since $[S_0,S_0^\psi]$ is connected, for each minimum separator $S$ the set of intervals
$\{I^\ell\mid I^\ell\cap S\neq\emptyset\}$ that non-trivially intersect~$S$ must be a contiguous sequence of intervals.
Therefore,
each separator $S$ is contained in the interval $[S_0^{\psi^{k_0}},S_0^{\psi^{k_0+2k}}]$
for some $k_0\in\ZZ$ where $k=|S_0|=|S|$ is the connectivity between the two ends.
Note that the diameter of $[S_0^{\psi^{k_0}},S_0^{\psi^{k_0+2k}}]$ in $G$
is equal to the diameter $D\in\NN$ between vertices in $[S_0,S_0^{\psi^{2k}}]$ since $\psi^{k_0}$
is an automorphism of $G$.
Thus, the diameter of $S$ in $G$ is bounded by the constant $D$
(which only depends on $S_0$ and $\psi$ but not on $S$).
\end{proof}

For a generalization of this lemma see also \cite[Proposition 4.2 and Corollary 4.3]{DBLP:journals/jct/ThomassenW93}.

\paragraph{Level Sets}
Since~$G$ is edge-transitive, it has at most two vertex orbits. Let~$V_{\LeftMostSep}\subseteq V(G)$ be the union of all minimum separators of~$G$ separating the two ends.
If some vertex is in~$V_{\LeftMostSep}$,
then its entire vertex orbit is in~$V_{\LeftMostSep}$.
Thus, the set $V_{\LeftMostSep}$ contains an entire orbit $O_1$,
i.e., $O_1\subseteq V_{\LeftMostSep}$.

Fix the two ends of the graph~$G$, call them left and right. For every finite separator~$S$ that separates the two ends let~$S^L$ and~$S^R$ be the connected components of~$G-S$ containing the left and right end, respectively. 
Note that $N_G(S^L)=S=N_G(S^R)$ is~$S$ if a minimum separator.

\begin{lem}
For every vertex~$v\in O_1$, there is a unique leftmost minimum separator~$S_v$ containing~$v$ in the following sense:
for every minimum separator~$S$ separating the two ends and containing~$v$ we have~$ (S_v)^L \subseteq S^L$.
\end{lem}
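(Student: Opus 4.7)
The plan is to prove the lemma via an uncrossing argument based on the submodularity of the neighborhood function: for any $X_1, X_2 \subseteq V(G)$,
\[
|N(X_1 \cap X_2)| + |N(X_1 \cup X_2)| \;\leq\; |N(X_1)| + |N(X_2)|,
\]
which follows from a per-vertex case analysis and, crucially, becomes an equality only when every vertex's contribution to the left-hand side matches its contribution to the right-hand side.

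By \cref{lem:boundedDiameter}, every minimum separator containing $v$ is contained in the ball $B_{D,G}(v)$, so local finiteness of $G$ yields that there are only finitely many minimum separators containing $v$. It therefore suffices to establish the following uncrossing property: given any two minimum separators $S_1, S_2$ containing $v$ with left parts $A_i \coloneqq S_i^L$, there is a minimum separator $S_{12}$ containing $v$ with $(S_{12})^L \subseteq A_1 \cap A_2$. Iterating pairwise over this finite collection then produces a minimum separator $S_v$ with $(S_v)^L \subseteq S^L$ for every minimum separator $S$ through $v$. Uniqueness follows at once from the identity $S = N(S^L)$, valid for every minimum separator $S$ because otherwise a vertex of $S$ with no neighbor on one side would let one shrink $S$.

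To establish the uncrossing property, observe that $A_1 \cap A_2$ contains every vertex sufficiently close to the left end whereas $S_1^R \cup S_2^R$ is disjoint from both $A_1 \cap A_2$ and $N(A_1 \cap A_2)$; hence $N(A_1 \cap A_2)$ separates the two ends, and the analogous conclusion holds for $N(A_1 \cup A_2)$. Combined with the submodular inequality, $|N(A_i)| = k$ (where $k$ denotes the connectivity between the two ends), and the lower bounds $|N(A_1 \cap A_2)|, |N(A_1 \cup A_2)| \geq k$, both quantities equal $k$, so both new separators are minimum and the submodular inequality is an equality.

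The main obstacle, and the one step where care is required, is to verify that $v \in N(A_1 \cap A_2)$. Since $v \in S_1 \cap S_2$ we have $v \notin A_1 \cup A_2$, and because every vertex of a minimum separator has a neighbor on each side of it (otherwise the separator admits a shrinking), $v$ has neighbors in both $A_1$ and $A_2$. Therefore $v \in N(A_1) \cap N(A_2)$ contributes $2$ to the right-hand side; and since $v$ has a neighbor in $A_1 \subseteq A_1 \cup A_2$, we automatically have $v \in N(A_1 \cup A_2)$, so its contribution to the left-hand side equals $[v \in N(A_1 \cap A_2)] + 1$. The equality in submodularity forces these two contributions to agree, yielding $v \in N(A_1 \cap A_2)$. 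Setting $S_{12} \coloneqq N(A_1 \cap A_2)$ completes the uncrossing step, since its left part, being a connected component of $G - S_{12}$ contained in $A_1 \cap A_2$, is a subset of both $A_1$ and $A_2$ as required.
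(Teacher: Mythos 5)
Your proof is correct and uses the same submodularity/uncrossing argument as the paper's. The only cosmetic difference is that the paper defines the uncrossed separator explicitly as $(S\cap S')\cup(S\cap S'^L)\cup(S^L\cap S')$, which contains $v$ by inspection, whereas your choice $N(S^L\cap S'^L)$ requires the additional (correct) observation that per-vertex equality in the submodular inequality forces $v\in N(S^L\cap S'^L)$; since both are minimum separators with left part inside $S^L\cap S'^L$, they in fact coincide.
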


\begin{proof}
Let $\CS=\{S\mid S$ is a minimum separator separating the ends that contains $v\}$
and note that $\CS$ is finite
since each $S\in\CS$ is contained in a ball centered at $v$ with radius $D$ (\cref{lem:boundedDiameter}).
For $S,S'\in\CS$,
the intersection~$S^L\cap {S'}^L$ is infinite since it contains the left infinite component of $G-(S\cup S')$.
We now use a submodularity argument. Define~$S^\cap \coloneqq (S\cap S') \cup (S \cap {S'}^L) \cup (S^L \cap S')$.
We claim that~$S^\cap \in \mathcal{S}$ and~${S^\cap}^L \subseteq S^L\cap {S'}^L$. Indeed,~$S^\cap$ is a separator separating the ends since every path from the left end to the right end has to enter~$S$ and~$S'$ at the same time or~$S$ first or~$S'$ first. The separator~$S^\cap$ is of minimum cardinality since otherwise~$S^\cup \coloneqq (S\cap S') \cup (S \cap {S'}^R) \cup (S^R \cap S')$ would be a separator of cardinality smaller than~$|S|=|S'|$ (since~$|S^\cup|+|S^\cap|=|S|+|S'|$).

Therefore, there is a leftmost minimum separator $S_v\in\CS$ where
$S_v^L=\bigcap_{S\in\CS} S^L$ is 
the intersection of finitely many left components
and where $S_v=N_G(S_v^L)$.
\end{proof}

The lemma implies in particular that if~$u\in S_v$ then~$S_u^L\subseteq S_v^L$.

For vertices~$v,v'\in O_1$ define~$D(v,v')\coloneqq |S^L_v\setminus S^L_{v'}| - |S^L_{v'}\setminus S^L_{v}|$. Note that~$D(v,v')$ is finite since~$S^L_v\setminus S^L_{v'}$ and~$S^L_{v'}\setminus S^L_{v}$ are finite. 
Fix some arbitrary vertex~$v_0\in O_1$.
We define a linearly ordered partition of $O_1$ into so-called \emph{(primary) level sets}
by partitioning the vertices $v\in O_1$ according to the value $D(v,v_0)\in\ZZ$.
More precisely, the ordered partition $(L_i)_{i\in\ZZ}$ of $O_1$ into non-empty sets is defined
such that $D(v,v_0)<D(w,v_0)$ if and only if $v\in L_i,w\in L_j$ for $i,j\in\ZZ$ with $i<j$.
Without loss of generality, we can assume that the indices $i\in\ZZ$ are chosen such that $L_0=\{v\in O_1\mid D(v,v_0)=0\}$ (which is non-empty since $v_0\in L_0$).
We also set the \emph{level} of a vertex $v\in L_i$ as $\LevelSet(v)\coloneqq\LevelSet(v)_{v_0}\coloneqq i\in\ZZ$,
and thus $L_i=\{v\in O_1\mid\LevelSet(v)=i\}$.
In case that $G$ has exactly two orbits $O_1,O_2$, we assign
\emph{secondary level sets} by defining
$J_{i+\frac{1}{2}}\coloneq\{v\in O_2\mid N(v)\subseteq L_i\cup
L_{i+1}\}$.
We also set the \emph{level} of $v\in J_{i+\frac{1}{2}}$ as
$\LevelSet(v)\coloneqq i+\frac{1}{2}$.
The next lemma in particular shows (Part~\ref{level:partition}) that every vertex of $G$ has a well-defined
level.

\begin{lem}\label{lem:levelSets}
Let $G$ be a connected, locally finite and edge-transitive graph with two ends.
Suppose~$u,v,w,v_0,v_0'\in O_1$.
\begin{enumerate}
  \item\label{level:add} $D(u,v)+D(v,w)=D(u,w)$.
  \item\label{level:c0} 
  Setting~$c\coloneqq D(v_0,v_0')$, 
  for all vertices $x\in O_1$ we have~$\LevelSet(x)_{v_0'}=\LevelSet(x)_{v_0}+c$.
  \item\label{level:finite} The cardinality of the (primary) level sets $L_i,i\in\ZZ$ is finite.
  \item\label{level:invariant} The partition $\CP\coloneqq\{L_i\mid i\in\ZZ\}$ of $O_1$ into (primary) level sets is $\aut(G)$-invariant. Moreover automorphisms map consecutive level sets to consecutive level sets.
  \item\label{level:partition} If $G$ has exactly two orbits, then $\CQ\coloneqq\{J_{i+\frac{1}{2}}\mid i\in\ZZ\}$ is 
  a partition of $O_2$ and this partition is $\aut(G)$-invariant.
  \item\label{level:endpoints} If $G$ is vertex-transitive, then all edges $e\in E(G)$ have endpoints in $L_i$ and $L_{i+1}$
  for some $i\in\ZZ$.
  \item\label{level:biendpoints} If $G$ has exactly two orbits, then
  all edges $e\in E(G)$ have endpoints in $L_i$ and $J_{i'}$ for some $i\in\ZZ$ and $i'\in\{i-\frac{1}{2},i+\frac{1}{2}\}$. 
  \item\label{level:regular} If $G$ is vertex-transitive, then the
    graph $G[L_i,L_{i+1}]$
    is regular for all $i\in\ZZ$.
  \item\label{level:biregular} If $G$ has exactly two orbits, then the graphs $G[L_i,J_{i+\frac{1}{2}}],G[J_{i+\frac{1}{2}},L_{i+1}]$ are biregular for all $i\in\ZZ$.
\end{enumerate}
\end{lem}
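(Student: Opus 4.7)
The plan is to prove the nine parts in the order listed, leveraging the strip structure (\cref{lem:translation}), the bounded separator diameter (\cref{lem:boundedDiameter}), and a submodularity argument.

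\emph{Parts~\ref{level:add}, \ref{level:c0} and \ref{level:invariant}.} For Part~\ref{level:add}, fix a finite window $W\subseteq V(G)$ containing the three pairwise symmetric differences of $S_u^L,S_v^L,S_w^L$; these are finite because each lies outside the infinite components of $G-(S_x\cup S_y)$. On $W$ we have $D(x,y)=|S_x^L\cap W|-|S_y^L\cap W|$, so $D(u,v)+D(v,w)=D(u,w)$ is a telescoping identity. Part~\ref{level:c0} follows immediately by applying Part~\ref{level:add} to $(x,v_0,v_0')$, yielding $D(x,v_0')=D(x,v_0)+D(v_0,v_0')$. For Part~\ref{level:invariant}, every $\sigma\in\aut(G)$ permutes minimum separators and hence leftmost minimum separators, so $D(x^\sigma,y^\sigma)=\pm D(x,y)$ with the sign reflecting whether $\sigma$ preserves or swaps the two ends; combined with Part~\ref{level:c0} this shows that $\sigma$ permutes $\CP$.

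\emph{Part~\ref{level:finite}.} I would use the infinite-order strip translation $\psi\in\aut(G)$ produced in \cref{lem:translation}. By Part~\ref{level:invariant}, $\psi$ shifts level indices by a fixed nonzero integer; WLOG call it $a>0$. It suffices to bound $|L_0\cup\dots\cup L_{a-1}|$, and this union meets every $\langle\psi\rangle$-orbit on $O_1$ in at most one vertex. The number of such orbits is at most the size of the fundamental domain $[S_{v_0},S_{v_0}^\psi]$, which is finite because local finiteness of $G$ together with the strip property $S_{v_0}\cap S_{v_0}^\psi=\emptyset$ leave only finitely many finite components of $G-(S_{v_0}\cup S_{v_0}^\psi)$ besides the two infinite ones.

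\emph{Part~\ref{level:partition}---the main obstacle.} The delicate task is to show that $N(v)\subseteq L_i\cup L_{i+1}$ for a \emph{unique} $i$; otherwise $v$ would lie in two overlapping $J$-sets. Since $\aut(G)$ preserves the bipartition, edge-transitivity produces for any $u_1,u_2\in N(v)$ an automorphism $\sigma$ fixing $v$ with $\sigma(u_1)=u_2$. By Part~\ref{level:invariant}, $\sigma$ acts on level indices either as a shift or as a reflection; a nontrivial shift is impossible because $N(v)$ is finite, so $\aut(G)_v$ acts on $\{\LevelSet(u):u\in N(v)\}$ via $\{\id\}$ or a single reflection, bounding this multiset to at most two distinct values. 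The monolevel case is excluded by transitivity on $O_2$: it would hold for every $v\in O_2$ and produce a star-like structure disconnecting $G$ along single levels, contradicting connectedness. A two-level configuration $\{\ell,\ell'\}$ with $\ell'-\ell\ge 2$ is then excluded by combining the reflection fixing $v$ with the direction-preserving translation $\psi$, forcing $\ell'-\ell=1$. Aut-invariance of $\CQ$ follows from Part~\ref{level:invariant} applied to $O_2$.

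\emph{Parts~\ref{level:endpoints}, \ref{level:biendpoints}, \ref{level:regular} and \ref{level:biregular}.} For Part~\ref{level:endpoints}, edge-transitivity forces a common level-difference $\delta=|\LevelSet(u)-\LevelSet(v)|$ on every edge; $\delta=0$ disconnects $G$ into level components and $\delta\ge 2$ into residue classes modulo $\delta$, so connectedness gives $\delta=1$. Part~\ref{level:biendpoints} is then immediate from Part~\ref{level:partition} and the bipartite structure. For Parts~\ref{level:regular} and \ref{level:biregular}, given two vertices $u,u'$ on the same side of a slice $G[L_i,L_{i+1}]$ or $G[L_i,J_{i+\frac{1}{2}}]$, any $\sigma\in\aut(G)$ with $\sigma(u)=u'$ (supplied by transitivity on the relevant vertex orbit) either preserves direction (hence the slice) or reverses it (identifying the degree of $u$ into one side of the slice with the degree of $u'$ into the opposite level). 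Combining both cases with powers of the direction-preserving $\psi$ and edge counting yields constant degree on each part of the slice, establishing the (bi)regularity.
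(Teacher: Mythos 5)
The proposal follows the same broad outline as the paper, and Parts~\ref{level:add}, \ref{level:c0}, \ref{level:endpoints}, and \ref{level:biendpoints} are fine (the telescoping identity via a common finite window is essentially the paper's argument with $I$ replaced by $W$, and the connectivity argument for the edge-level difference matches the paper). Your Part~\ref{level:finite} is a legitimate alternative route (via the strip translation and a finite fundamental domain) instead of the paper's squeeze between two separator components from \cref{lem:boundedDiameter}. However, there are genuine gaps at exactly the spots where the paper works hardest, and they all stem from under-treating automorphisms that swap the two ends.

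For Part~\ref{level:invariant}, you assert that ``every $\sigma\in\aut(G)$ permutes minimum separators and hence leftmost minimum separators, so $D(x^\sigma,y^\sigma)=\pm D(x,y)$.'' This is not automatic: an end-swapping $\sigma$ does \emph{not} send leftmost separators to leftmost separators; it sends the leftmost separator $S_x$ (with its minimal left component) to the \emph{rightmost} separator $\tilde S_{x^\sigma}$ (with minimal right component). What follows directly is only $D(x,y)=\tilde D(x^\sigma,y^\sigma)$, where $\tilde D$ is defined with the roles of the ends reversed. Showing that $D$ and $-\tilde D$ agree (or even that they induce the same partition) requires a further invariant; the paper explicitly flags this (``We need to be careful that the partition $\CP$ does not change when we swap the roles of the left and the right end'') and supplies the argument that $|V(G)\setminus(S_v^L\cup\tilde S_v^R)|$ is constant over $O_1$ by transitivity. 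You need to add this step; without it, your Part~\ref{level:invariant} is incomplete, and since your proof of Part~\ref{level:finite} is built on Part~\ref{level:invariant}, that gap propagates.

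In Part~\ref{level:partition}, the exclusion of a two-level gap $\ell'-\ell\ge 2$ ``by combining the reflection fixing $v$ with the direction-preserving translation $\psi$'' is not an argument I can see how to complete: a dihedral combination of a reflection and a translation says nothing about the spacing between the two levels seen by a single vertex $v\in O_2$. The paper uses a short connectivity argument: if the common gap were $c>1$, the set $O_2\cup\bigcup_i L_{c\cdot i}$ would contain a proper non-trivial connected component, contradicting connectedness of $G$. This is what you should substitute.

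Finally, for Parts~\ref{level:regular} and \ref{level:biregular} your sketch does not resolve the essential difficulty. If $\sigma$ reverses the direction, you only learn $\deg_{L_{i+1}}(u)=\deg_{L_{i-1}}(u')$, hence (by constant total degree on $O_1$) $\deg_{L_{i-1}}(u)=\deg_{L_{i+1}}(u')$ — i.e.\ $u$ and $u'$ can have \emph{flipped} degree profiles, which is exactly the scenario that must be ruled out. Powers of the direction-preserving $\psi$ only translate this ambiguity along the levels, and edge-counting between $L_i$ and $L_{i+1}$ is satisfied either way, so these do not close the gap. The paper's parenthetical remark (``In principle, we might think this could happen because $G$ has reflections swapping the two ends'') flags precisely this, and the resolution is the block-system argument: partition each $L_i$ into $L_i^>$ and $L_i^<$, form the two block systems $\{L^{>>},L^{<<}\}$ and $\{L^{><},L^{<>}\}$, and use edge-transitivity to conclude that all edges lie within one block of one system, contradicting connectivity. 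That argument (or something equally specific) is needed; ``combining both cases with $\psi$ and edge counting'' as stated does not deliver regularity.
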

\begin{proof}
We prove Part \ref{level:add}.
Let $S_u,S_v,S_w$ be leftmost minimum separators.
Since $G$ is connected and locally finite, it follows that for each finite separator $S$ separating the two ends
there are finitely many connected components in $G-S$,
and exactly two of them are infinite.
Let $S_u^L,S_v^L,S_w^L$ be the respective connected components of $G-S_u,G-S_v,G-S_w$ containing the left end.
From what we just observed the graph $G-(S_u\cup S_v\cup S_w)$ has exactly two infinite
connected components (containing the left and right end),
and the one containing the left end is in turn contained in the intersection $I\coloneqq S_u^L\cap S_v^L\cap S_w^L$.
Therefore, the sets $S_u^L\setminus I,S_v^L\setminus I,S_w^L\setminus I$ are finite.
This implies that $D(u,v)=|S_u^L\setminus I|-|S_v^L\setminus I|$.
Similarly, it holds that $D(v,w)=|S_v^L\setminus I|-|S_w^L\setminus I|$
and $D(u,w)=|S_u^L\setminus I|-|S_w^L\setminus I|$.
Therefore, it holds that $D(u,v)+D(v,w)=|S_u^L\setminus I|-|S_v^L\setminus I|+|S_v^L\setminus I|-|S_w^L\setminus I|=D(u,w)$.
This proves Part \ref{level:add}.

We prove Part \ref{level:c0}.
Indeed note that Part \ref{level:c0} follows from Part \ref{level:add} by
setting $c\coloneqq D(v_0,v_0')$. 

We prove Part \ref{level:finite}.
We consider a leftmost minimum separator $S_v$ for some vertex $v\in L_i$.
Since the distance of vertices within a minimum separator is bounded by a constant $D$ (\cref{lem:boundedDiameter}),
there are vertices $u,w$ and leftmost minimum separators $S_u,S_w$
such that $S_u^L\subsetneq S_v^L\subsetneq S_w^L$
where $S_w^L\setminus S_u^L$ is finite.
Moreover, by choosing~$u$ and~$w$ so that~$S_u$ and~$S_v$ respectively~$S_w$ and~$S_v$ are sufficiently far apart, it holds that $L_i\subseteq S_w^L\setminus S_u^L$, and thus $L_i$ is finite.

We prove Part \ref{level:invariant}.
By Part \ref{level:c0}, the partition $\CP$ into level sets does not depend on $v_0$.
We need to be careful that the partition $\CP$ does not change when we swap the roles of the left and the right end,
i.e., if we were to define the level sets with respect to the right rather than the left end
by considering the \emph{rightmost minimum separator} $\tilde S_v$ containing $v$ with an inclusion minimal component $\tilde S_v^R$. 
Since~$\aut(G)$ acts transitively on~$O_1$,
the size of $V(G)\setminus (S_v^L\cup\tilde S_v^R)$ is an
invariant for all vertices $v\in O_1$ (and is finite since $G$ has two ends). It follows that two vertices~$v,v'\in O_1$ are in the same (primary) level set if and only if they are
in the same (primary) level set if we define level sets with respect to the right rather than the left end: Indeed, every vertex~$v$ partitions the vertex set into three parts: a left part~$S_v^L$, a right part~$\tilde S_v^R$, and a middle part $V(G)\setminus (S_v^L\cup\tilde S_v^R)$. Our original definition measures the difference of vertices in the left parts, while the definition with rightmost minimum separators would measure the difference of vertices in the right parts. However, the middle parts have the same number of vertices. This shows that the property
of being in the same level set is preserved under automorphisms.

It is clear that automorphisms that do not interchange the ends map consecutive level sets to consecutive level sets. 
 It follows with the same counting arguments that also automorphisms that do interchange the ends map consecutive level sets to consecutive level sets. 

We prove Part \ref{level:partition}.
Let $v$ be a vertex in~$O_2$. We argue that $v$ has neighbors in exactly two (primary) levels sets.

If $v$ has neighbors in only one (primary) level set, then this is the case for all vertices in $O_2$,
and since $G$ is edge-transitive, there is no path in $G$ connecting two distinct (primary) level sets,
contradicting that $G$ is connected.

Next, we rule out the case that $v$ has neighbors in more than two (primary) level sets.
We say that an edge $e=vw_j$ with $v\in O_2,w_j\in L_j$ \emph{lies between other edges} if there are edges $vw_i,vw_k\in E(G)$ such that
$w_i\in L_i,w_k\in L_k$ and $i<j<k$.
If~$v$ has neighbors in more than two level sets, then there are edges lying between others.
However, there are also edges that do not lie in between other edges since $G$ is locally finite.
This contradicts the fact that~$G$ is edge-transitive.
Thus, every vertex $v\in O_2$ has neighbors in exactly two (primary) level sets.

Finally, we show that each vertex $v$ has neighbors in two consecutive (primary) level sets.
Let $v_i,v_j\in N(v)$ such that $v_i\in L_i,v_j\in L_j,i\neq j$.
Since~$O_2$ is an orbit, we can conclude that the difference $|i-j|$
is an invariant across all vertices from~$O_2$.
If this invariant was different from~$1$, then the graph would not be connected.
More precisely, if the invariant would be $c>1$, then the
set~$O_2\cup\bigcup_{i\in\ZZ} L_{c\cdot i}\subsetneq V(G)$ would contain a non-trivial connected component of~$G$, contradicting that~$G$ is connected.
This means that $\CQ$ forms a partition of $O_2$.

We prove Part \ref{level:endpoints}.
Since~$G$ is edge-transitive and level sets are blocks under automorphisms, for every edge~$e=vw\in E(G)$ the value of~$|\LevelSet(v)-\LevelSet(w)|$ is an invariant among all edges.
(The sign of $\LevelSet(v)-\LevelSet(w)$ depends on the choice of the left end.)
Again, if this invariant were $c>1$, then
the set~$\bigcup_{i\in\ZZ} L_{c\cdot i}\subsetneq V(G)$ would contain a non-trivial connected component of~$G$, contradicting that~$G$ is connected.

Part \ref{level:biendpoints} follows directly from Part \ref{level:partition}
since each vertex $v\in J_{i+\frac{1}{2}}$ has only neighbors in
$L_i$ and $L_{i+1}$ for all $i\in\ZZ$.

We prove Part \ref{level:regular}.
Suppose for the sake of contradiction that two vertices in $L_i$ have a different number of neighbors in $L_{i+1}$.
(In principle, we might think this could happen because~$G$ has reflections swapping the two ends.)
Then, every level set~$L_i$ can be partitioned into two non-empty sets $L_i^>$ and $L_i^<$ ~consisting of those vertices~that have more neighbors in~$L_{i+1}$ than in~$L_{i-1}$ and vice versa.
Let $L^{>>}\coloneqq\bigcup_{i\in\ZZ} (L_i^>),L^{<<}\coloneqq\bigcup_{i\in\ZZ} (L_i^<)$ and
let $L^{><}\coloneqq\bigcup_{i\in\ZZ} (L_{2i}^>\cup L_{2i+1}^<),L^{<>}\coloneqq\bigcup_{i\in\ZZ} (L_{2i}^<\cup L_{2i+1}^>)$.
Note that $\CB_1\coloneqq\{L^{>>}, L^{<<}\}$ and
$\CB_2\coloneqq\{L^{><},L^{<>}\}$ are both block systems.
Let $e\in E(G)$ be an edge. Then, $e$ is contained in some block of one of those block systems, i.e.,
$e\subseteq B$ for some $B\in\CB_{i^*}$ and some $i^*\in\{1,2\}$.
By edge-transitivity, all edges are contained in blocks of $\CB_{i^*}$,
i.e., for all edges $e\in E(G)$ there is a block $B\in\CB_{i^*}$
such that $e\subseteq B$.
But then, there are no edges connecting the two blocks in $\CB_{i^*}$,
contradicting that $G$ is connected.
Therefore, each graph $G[L_i,L_{i+1}]$ is biregular. Since~$|L_i|=|L_{i+1}|$, the graph is even regular.

We prove Part \ref{level:biregular}.
We want to rule out that vertices in $L_i$ have a different number of neighbors in $J_{i+\frac{1}{2}}$.
If there is a vertex in $L_i$ that does not have a neighbor in
both $J_{i-\frac{1}{2}}$ and $J_{i+\frac{1}{2}}$, then this would hold
for all vertices in $L_i$ since $L_i\subseteq O_1$.
In this case, $J_{i-\frac{1}{2}}$ and $J_{i+\frac{1}{2}}$ would not be in the same connected
component, contradicting that $G$ is connected.
Thus, let $v_1,v_2\in L_i$ and
let $e_k$ be an edge connecting $v_k\in L_i$ with $J_{i+\frac{1}{2}}$ for $k=1,2$.
Since $G$ is edge-transitive, there is an automorphism
that maps $e_1$ to $e_2$. Since $L_i\subseteq O_1$ and $J_{i+\frac{1}{2}}\subseteq O_2$,
this automorphism also maps $v_1$ to $v_2$ and stabilizes $J_{i+\frac{1}{2}}$ setwise.
Therefore, the number $d_k^+\coloneqq|N_G(v_k)\cap J_{i+\frac{1}{2}}|$ of neighbors
is the same for both $k=1,2$.
Since $v_1,v_2\in L_i$ have the same degree $d_k(v_k)\coloneqq\deg_G(v_k)$, also the number
$d_k^-\coloneqq|N_G(v_k)\cap J_{i-\frac{1}{2}}|$
of neighbors coincide.
By swapping the role of the primary and secondary level sets,
the same arguments can be applied to vertices $w_1,w_2$ in a secondary level set~$J_{i+\frac{1}{2}}$.
Thus, the bipartite graph induced on two level sets is biregular.
\end{proof}

Now, we consider biregular graphs.

\begin{lem}\label{lem:doubleMatching}
Let $G$ be a graph and $A,B,C\subseteq V(G)$ disjoint subsets of the vertices such that $G[A,B]$ and~$G[B,C]$ are induced biregular graphs that are not edgeless. Suppose~$m\coloneqq|A|=|C|\leq|B|$.
Then, there are $m$ vertex-disjoint paths from $A$ to $C$.
\end{lem}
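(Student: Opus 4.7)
The plan is to apply Menger's theorem to reduce the problem to a vertex-cut lower bound, and then establish this bound by a double-counting argument that exploits biregularity.

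First, I would observe that edges of $G$ outside $G[A,B] \cup G[B,C]$ cannot destroy $A$-to-$C$ paths, so we may freely work with the subgraph $G_0$ consisting of exactly these bipartite edges. In $G_0$, every $A$-to-$C$ path has the form $a-b-c$ with $a\in A$, $b \in B$, $c \in C$, $ab \in E(G[A,B])$, and $bc \in E(G[B,C])$. By Menger's theorem it suffices to show that every vertex set $T \subseteq V(G_0)$ for which $G_0 - T$ contains no $A$-to-$C$ path satisfies $|T| \geq m$. So I would fix such a cut and write $T_X := T \cap X$ for $X \in \{A, B, C\}$, aiming to prove $|T_A| + |T_B| + |T_C| \geq m$.

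The next step uses biregularity. Let $d_A, d_B^A$ be the respective bipartite degrees in $G[A,B]$, so $d_A|A| = d_B^A|B|$ and hence $d_A/d_B^A = |B|/|A|$; symmetrically define $d_C, d_B^C$ for $G[B,C]$, yielding $d_B^C/d_C = |C|/|B|$. The cut condition says that every $b \in B \setminus T_B$ must have either all its $A$-neighbors in $T_A$ or all its $C$-neighbors in $T_C$. Let $B^* \subseteq B$ be the set of vertices with at least one $A$-neighbor outside $T_A$. Counting edges between $B \setminus B^*$ and $T_A$ in $G[A,B]$ gives $|B \setminus B^*| \cdot d_B^A \leq |T_A| \cdot d_A$, which rearranges to $|B^*| \geq |B| - (|B|/|A|)\,|T_A|$. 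Each $b \in B^* \setminus T_B$ must have all its $C$-neighbors in $T_C$, so counting in $G[B,C]$ gives $|T_C| \geq (|C|/|B|)\,|B^* \setminus T_B|$.

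To finish, I would do a short case analysis on whether $|B^*| \geq |T_B|$. In that case, chaining the two inequalities and using $|A|=|C|=m$ and $|B| \geq m$ yields $|T_C| \geq m - |T_A| - (m/|B|)|T_B| \geq m - |T_A| - |T_B|$, hence $|T| \geq m$. In the opposite case $|B^*| < |T_B|$, the bound on $|T_C|$ is vacuous, but the first inequality alone gives $|T_A| + |T_B| \geq |B| + |T_A|(1 - |B|/|A|)$; since $|B|/|A| \geq 1$, the right-hand side is a non-increasing function of $|T_A|$ and so is minimized over $0 \leq |T_A| \leq |A|$ at $|T_A| = |A|$, where it equals $|A| = m$. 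Thus $|T| \geq m$ in both cases.

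The main obstacle is the case split: the natural chain of inequalities produces a meaningful lower bound on $|T_C|$ only when $|B^* \setminus T_B|$ is non-negative, and one has to salvage the bound from $|T_A| + |T_B|$ in the degenerate case. This step relies crucially on the assumption $|A| \leq |B|$, which makes the coefficient of $|T_A|$ in the resulting linear expression non-positive and allows us to evaluate at the boundary.
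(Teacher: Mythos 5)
Your proof is correct and takes essentially the same route as the paper's: apply Menger's theorem and then bound the separator size by double-counting edges via the biregularity degree ratios (the paper's reachable set $R_B$ is precisely your $B^*\setminus T_B$). The only real difference is that your case split on whether $|B^*|\geq|T_B|$ is superfluous — since $|B^*\setminus T_B|\geq|B^*|-|T_B|$ and the multiplier $|C|/|B|$ is positive, the chain $|T_C|\geq(|C|/|B|)(|B^*|-|T_B|)\geq m-|T_A|-(m/|B|)|T_B|\geq m-|T_A|-|T_B|$ holds unconditionally, which is exactly how the paper avoids any case distinction by propagating through $|R_A|\to|R_B|\to|R_C|$ in one linear pass.
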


\begin{proof}
Let $S$ be a minimum separator between $A$ and $C$.
By Menger's theorem, it suffices to show that $|S|\geq m$.
Let $S_A\coloneqq S\cap A,S_B\coloneqq S\cap B$ and $S_C\coloneqq S\cap C$.
Let $R\subseteq V(G)$ be the set of vertices that can be reached by a path in $G-S$ starting in $A\setminus S$.
Let $R_A\coloneqq R\cap A,R_B\coloneqq R\cap B$ and $R_C\coloneqq R\cap C$.
Clearly, it holds that $|R_A|=|A|-|S_A|$.
Define $c\coloneqq \frac{|B|}{m}\geq 1$.
Since $G[A,B]$ is regular, the size of the neighborhood of $R_A\subseteq A$ in $B$ is at least $c\cdot|R_A|$.
Note that $N(R_A)\cap B\subseteq R_B\cup S_B$.
This leads to $|R_B|\geq |N(R_A)\cap B|-|S_B|\geq c\cdot |R_A|-|S_B|$.
With the same argument $|R_C|\geq |N(R_B)\cap C|-|S_C|\geq c^{-1}\cdot |R_B|-|S_C|$.
In total, we have that $|R_C|\geq|A|-|S_A|-c^{-1}\cdot|S_B|-|S_C|\geq |A|-|S|$.
On the other hand, $|R_C|=0$ since $S$ separates $A$ and $C$.
This means that $|S|\geq |A|=m$.
\end{proof}

The following lemma shows that the size of the (primary) level sets is the connectivity between the two ends.

\begin{lem}\label{lem:levelSetPaths}
Let $G$ be a connected, locally finite and edge-transitive graph with two ends
and let $m\coloneqq|L_0|$ be the size of a (primary) level set. There are $m$ vertex-disjoint paths
connecting the two ends, and thus $m$ is the connectivity between the two ends.
Furthermore, if $G$ has exactly two orbits, then $|L_0|\leq|J_{\frac{1}{2}}|$.
\end{lem}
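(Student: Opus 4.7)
Let $k$ denote the connectivity between the two ends. The plan is to show $m = |L_0| = k$ by a two-way bound, and then to deduce the analogous lower bound for a secondary level set. The first step is to verify that $L_0$ is a finite separator between the two ends. By Parts~\ref{level:endpoints} and~\ref{level:biendpoints} of \cref{lem:levelSets}, every edge of $G$ changes the level by exactly~$1$ in the vertex-transitive case and by exactly~$\tfrac{1}{2}$ in the two-orbit case. Consequently the sequence of levels along any path changes by at most~$1$ per step. A ray representing the left end must have levels tending to $-\infty$: otherwise the ray would visit only finitely many level sets, each of them finite by Part~\ref{level:finite}, and hence only finitely many vertices, a contradiction. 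The symmetric statement holds for right-end rays. An intermediate-value argument then shows that every path from a left-end ray to a right-end ray must pass through $L_0$, so $L_0$ is a finite separator between the two ends, giving $k \leq |L_0| = m$.

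The second step applies Halin's theorem (\cref{lem:paths}) to obtain $k$ vertex-disjoint bidirectionally infinite paths connecting the two ends. Each such path crosses $L_0$ in at least one vertex by the first step, and vertex-disjointness forces the $k$ crossings to occur at distinct vertices of $L_0$, so $|L_0| \geq k$. Combined with the first step this gives $m = |L_0| = k$, and the $k$ Halin paths are precisely the required $m$ vertex-disjoint paths. For the two-orbit case, the identical argument applies with $L_0$ replaced by a secondary level set $J_{i+\tfrac{1}{2}}$: by Part~\ref{level:biendpoints} every path joining the two ends must cross $J_{i+\tfrac{1}{2}}$, and the $k$ Halin paths cross it at $k$ distinct vertices, yielding $|J_{i+\tfrac{1}{2}}| \geq k = |L_0| = m$, which is the claimed inequality.

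The main obstacle I anticipate is the careful verification that a ray representing the left (respectively right) end indeed has levels tending to $-\infty$ (respectively $+\infty$). This uses local finiteness, the finiteness of each level set, and the characterization of ends via finite separators (one must check that the tail of such a ray ends up on the correct side of every finite level set $L_N$). Once this is nailed down, the remainder of the proof is simply a clean count of how many vertex-disjoint paths can simultaneously cross a finite separator, played off against Halin's theorem.
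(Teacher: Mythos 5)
Your argument establishes the same inequality twice and never proves the converse, so it does not actually establish $m = k$. In your first step you show that $L_0$ is a separator of size $m$, giving $k \leq m$. In your second step you take $k$ Halin paths, observe that each crosses $L_0$ at a distinct vertex, and conclude $|L_0| \geq k$ --- but that is again $m \geq k$, the same inequality. Nothing in your argument rules out the possibility that $m > k$, i.e.\ that $L_0$ is a separator but not a \emph{minimum} one. To conclude $m = k$ (equivalently, to produce $m$ vertex-disjoint paths, which is the actual content of the lemma) you would need the direction $m \leq k$, and that requires an argument showing there really are $m$ pairwise disjoint paths through the levels. This is exactly what the paper supplies, and it is the non-trivial part: in the vertex-transitive case it uses K\H{o}nig/Hall on the regular bipartite graphs $G[L_i,L_{i+1}]$ to extract a perfect matching between consecutive levels and concatenates these into $m$ disjoint doubly-infinite paths; in the two-orbit case it invokes \cref{lem:doubleMatching} on the biregular graphs $G[L_i,J_{i+\frac{1}{2}}]$ and $G[J_{i+\frac{1}{2}},L_{i+1}]$. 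Without that construction, your deduction ``combined with the first step this gives $m = |L_0| = k$'' is a non sequitur.

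The same defect propagates to the ``furthermore'' clause: you conclude $|J_{i+\frac12}| \geq k$, and then invoke the unproven equality $k = m$ to turn this into $|J_{i+\frac12}| \geq m$. The paper instead obtains $|L_0| \leq |J_{i+\frac12}|$ by a contradiction argument that is entangled with the path construction --- if $|J_{i+\frac12}| < m$ then one can still build disjoint paths from $J_{i-\frac12}$ to $J_{i+\frac12}$, but then some vertex of $O_1$ lies off all of them, and such a vertex could not lie in any minimum separator, contradicting $O_1 \subseteq V_{\mathrm{sep}}$. Your step 1 (the intermediate-value argument showing every left-to-right path crosses $L_0$, hence $L_0$ is a separator) is fine and is implicitly used in the paper, but it only gives the easy half of the equality.
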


\begin{proof}
\icase{1. $G$ is vertex-transitive}
By Part \ref{level:endpoints} and \ref{level:regular} of \cref{lem:levelSets}, each edge of $G$ lies within a regular graph $G[L_i,L_{i+1}]$ for some $i\in\ZZ$.
It follows from Hall's marriage theorem
that $G[L_i,L_{i+1}]$ has a matching of size $m\coloneqq|L_i|=|L_{i+1}|$.
This leads to $m$ vertex-disjoint paths $P_1,\ldots, P_m$ connecting the ends.

\icase{2. $G$ is not vertex-transitive} In that case~$G$ has a second orbit $O_2\neq O_1$.
By Parts \ref{level:biendpoints} and~\ref{level:biregular} of \cref{lem:levelSets},
each edge of $G$ belongs to one of the biregular graphs
$G[L_i,J_{i+\frac{1}{2}}],G[J_{i+\frac{1}{2}},L_{i+1}]$ for some $i\in\ZZ$.
Assume first that $m\leq|J_{i+\frac{1}{2}}|$.
By \cref{lem:doubleMatching}, there are $m$ vertex-disjoint paths from $L_i$ to $L_{i+1}$ (via $J_{i+\frac{1}{2}}$).
This leads to $m$ vertex-disjoint paths connecting the two ends.

Now, assume for the sake of contradiction that $|J_{i+\frac{1}{2}}|<m$.
Then, with the same argument, we obtain vertex-disjoint paths from $J_{i-\frac{1}{2}}$ to $J_{i+\frac{1}{2}}$.
But then, there are vertices in $O_1$ that are not contained in these paths
(and thus not contained in any minimum separator separating the two ends),
contradicting the fact that each vertex in $O_1\subseteq V_{\LeftMostSep}$ is in such a separator.
\end{proof}

Suppose that the connectivity between the two ends is~$m$. For bidirectionally infinite paths $P_1,\ldots,P_m$
and a subset $S\subseteq V(G)$,
we let $H^S_{P_1,\ldots,P_m}$
be the graph with vertex set $\{1,\ldots,m\}$ in which two vertices~$i,j\in[m]$ are adjacent if there is a path in~$G$
from some vertex in~$P_i$ to some vertex in~$P_j$
whose vertices are in $S$ and
whose internal vertices do not lie on any of the~$m$ paths~$P_1,\ldots,P_m$.
We also write $H_{P_1,\ldots,P_m}$ for $H^{V(G)}_{P_1,\ldots,P_m}$.

\begin{lem}\label{lem:paths:invariant}
Let $G$ be a connected, locally finite and edge-transitive graph with two ends
and let $m$ be the connectivity between the two ends.
For vertex-disjoint paths~$P_1,\ldots,P_m$ connecting the two ends there are vertex-disjoint paths~$P'_1,\ldots,P'_m$
connecting the two ends and an automorphism~$\psi\in \aut(G)$ of infinite order
which maps each path~$P'_i$ to itself so that~$H_{P_1,\ldots,P_m}= H_{P'_1,\ldots,P'_m}$.
\end{lem}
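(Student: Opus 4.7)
The plan is, first, to produce a convenient infinite-order automorphism $\psi$ of $G$; second, to define the paths $P'_i$ as a $\psi$-periodic extension of the central portion of each $P_i$; and third, to verify $H_{P_1,\ldots,P_m} = H_{P'_1,\ldots,P'_m}$, which is the main obstacle. By \cref{lem:translation} there exists some infinite-order $\psi_0 \in \aut(G)$; after replacing $\psi_0$ by $\psi_0^2$ if necessary it fixes each of the two ends setwise, and by Part~\ref{level:invariant} of \cref{lem:levelSets} it shifts primary level sets by some $k_0 \geq 1$. \cref{lem:levelSetPaths} gives $|L_j|=m$, so each $P_i$ meets each primary level set $L_j$ in a unique vertex $v_{i,j}$; consequently $\psi_0$ induces a permutation $\sigma$ of $[m]$ via $\psi_0(v_{i,0}) = v_{\sigma(i), k_0}$. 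Taking $\psi \coloneqq \psi_0^{s}$ for $s$ a sufficiently large even multiple of the order of $\sigma$ yields an infinite-order automorphism of (even) shift $k = sk_0$ satisfying $\psi(v_{i,j}) = v_{i, j+k}$ for all $i, j$; the exact size of $s$ will be chosen in the third step.

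Let $F \coloneqq [L_{-k/2}, L_{k/2}]$ be the fundamental slab, and let $R_i$ denote the subpath of $P_i$ from $v_{i,-k/2}$ to $v_{i,k/2}$, a finite path lying in $F$. The $R_i$ are pairwise vertex-disjoint. Define $P'_i \coloneqq \bigcup_{n\in\ZZ} \psi^n(R_i)$. Because $\psi(v_{i,-k/2}) = v_{i,k/2}$, consecutive translates share only the boundary primary vertex, so each $P'_i$ is a well-defined bi-infinite path connecting the two ends. For $i \neq j$, the translates $\psi^n(R_i)$ and $\psi^{n'}(R_j)$ either lie in interiorly disjoint slabs or meet only at a shared boundary primary level set $L_{nk+k/2}$, where they use distinct primary vertices; hence the $P'_i$ are pairwise vertex-disjoint, and $\psi$ maps each $P'_i$ to itself by construction.

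The central task is to verify $H_{P_1,\ldots,P_m} = H_{P'_1,\ldots,P'_m}$. The crucial observation is that $P_i$ and $P'_i$ visit the same primary vertex in every primary level set: $P_i \cap L_j = P'_i \cap L_j = \{v_{i,j}\}$ for every $j \in \ZZ$. If $G$ is vertex-transitive, Part~\ref{level:endpoints} of \cref{lem:levelSets} gives $V(G) = \bigcup_j L_j$, so $P'_i = P_i$ as vertex sets and $H = H'$ is immediate. In the two-orbit case, edges of $G$ only join primary and secondary vertices (Part~\ref{level:biendpoints} of \cref{lem:levelSets}), and since the paths cover every primary vertex, the unused vertex set $V(G) \setminus \bigcup_\ell P_\ell$ is independent in $G$; hence every witness for an edge of $H$ or $H'$ has length at most $2$ and spans at most three consecutive level sets. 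I would choose $s$ in the first step large enough that $F$ contains a witness for every edge of the (finite) graph $H$; the agreement $P_\ell \cap F = P'_\ell \cap F$ then yields $H \subseteq H'$. For the converse, by $\psi$-invariance of $(G, \{P'_\ell\})$ any $H'$-witness admits a $\psi$-translate lying inside $F$, which by the agreement on $F$ is simultaneously a witness for $H$. The main technical subtlety is handling witnesses that straddle the boundary of $F$ and thus do not fit inside any single $\psi$-translate of $F$; this is resolved by shifting the offset of $F$ and inflating $s$ so that, via a pigeonhole over the finitely many local witness configurations, each orbit has a representative in the interior of $F$.
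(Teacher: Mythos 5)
Your proposal follows the same general strategy as the paper: invoke \cref{lem:translation} to get a translation, cut out a fundamental slab of the given paths, glue translates to form $\psi$-invariant paths $P'_i$, and check $H = H'$ by a containment-plus-invariance argument. However, there is a genuine gap in the first step, and it propagates.

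You claim that taking $\psi \coloneqq \psi_0^s$ for $s$ a multiple of the order of $\sigma$ (where $\psi_0(v_{i,0}) = v_{\sigma(i),k_0}$) yields $\psi(v_{i,j}) = v_{i,j+k}$ for \emph{all} $i,j$. This is false. Since $\psi_0$ is an arbitrary infinite-order automorphism, it need not preserve the paths $P_i$, so the permutation $\sigma_j$ defined by $\psi_0(v_{i,j}) = v_{\sigma_j(i),\,j+k_0}$ genuinely depends on $j$, and $\psi_0^s(v_{i,0}) = v_{\tau_s(i),\,sk_0}$ with $\tau_s = \sigma_0\sigma_{k_0}\cdots\sigma_{(s-1)k_0}$, which is unrelated to $\sigma^s$. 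A concrete counterexample: in the twisted cylindrical grid of thickness $3$, take $\psi_0(a,b) = (a+1,b)$ and choose the paths $P_1,P_2,P_3$ to be the ``straight'' rails for $a\le 0$ but shifted one step around the cylinder for $a\ge 1$; then $\sigma_0$ is a $3$-cycle while $\sigma_j = \id$ for $j\ne 0$, so $\tau_s = \sigma_0 \ne \id$ for every $s\ge 1$ and no power of $\psi_0$ works without re-basing. Consequently your ``crucial observation'' $P_i \cap L_j = P'_i \cap L_j$ for all $j$ also fails (the paths agree only on the chosen slab, not at every level), and so does the ``$P'_i = P_i$ as vertex sets, $H = H'$ is immediate'' conclusion in the vertex-transitive case.

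The repair—which is essentially what the paper does via ``a suitable power,'' though it is terse—is a pigeonhole argument combined with re-basing: the compositions $\tau_1,\tau_2,\ldots$ live in the finite set of permutations of $[m]$, so $\tau_a = \tau_b$ for some $a<b$, giving $\sigma_{ak_0}\cdots\sigma_{(b-1)k_0}=\id$; one must then shift the fundamental slab to start at level $ak_0$ and take $\psi = \psi_0^{b-a}$. Even after this fix you only get $\psi$-compatibility at the slab's boundary, not at all levels, so the $H=H'$ verification must lean entirely on the slab agreement plus $\psi$-invariance rather than on the (false) global vertex coincidence. You do acknowledge a need for a pigeonhole near the end, but only for the secondary straddling issue; the more fundamental one is buried inside your (unjustified) claim $\psi(v_{i,j}) = v_{i,j+k}$ for all $i,j$.
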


\begin{proof}
Let~$S$ be a minimum separator separating the two ends.
Then, the separator $S$ contains exactly one vertex from every path $P_i$.
By \cref{lem:translation},
there is an automorphism $\psi\in\aut(G)$ of infinite order
such that $S\cap S^{(\psi^t)}=\emptyset$
for all $t\in\ZZ,t\neq 0$.
Let~$P_i[S,S^\psi]$ be the restriction of~$P_i$ to the subset~$[S,S^\psi]\subseteq V(G)$. Note that~$P_i[S,S^\psi]$ is a path with one endpoint in~$S$ and one endpoint in~$S^\psi$.
By possibly replacing~$\psi$ with a suitable power of itself, we can ensure that the connections between the different paths that are responsible for edges in~$H_{P_1,\ldots,P_m}$ also occur in the interval~$[S,S^\psi]$, i.e., that
$H_{P_1,\ldots,P_m}=H^{[S,S^\psi]}_{P_1,\ldots,P_m}$.
Again, by possibly replacing~$\psi$ with a suitable power of itself, we can ensure that for all~$i\in [m]$ and all vertices~$v\in V(G)$ it holds that~$v\in P_i\cap S$ if and only if~$v^\psi \in P_i\cap S^\psi$.
Define~$P'_i$ to be~$\bigcup_{t\in\ZZ}P_i[S,S^\psi]^{(\psi^t)}$ for each $i\in[m]$. Then, the collection of paths~$P'_1,\ldots,P'_m$ together with~$\psi$ satisfies the requirements of the lemma.
\end{proof}

We write $P_\infty$ to denote be the bidirectionally infinite path on vertex set~$\mathbb{Z}$.

\begin{cor}\label{cor:HisMinor}
Let $G$ be a connected, locally finite and edge-transitive graph with two ends. Let $m$ be the connectivity between the two ends.
Let $P_1,\ldots,P_m$ be vertex-disjoint paths connecting the two ends
and let $H\coloneqq H_{P_1,\ldots,P_m}$.
Then, the Cartesian product~$H\square P_{\infty}$ is a minor of~$G$.
\end{cor}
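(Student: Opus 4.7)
The plan is to build a minor model of $H\square P_\infty$ in $G$ by tiling $G$ with $\psi$-translates of a single ``fundamental block'' that already realises $H$ as a minor. Apply \cref{lem:paths:invariant} to obtain pairwise vertex-disjoint bi-infinite paths $P'_1,\ldots,P'_m$ connecting the two ends together with an automorphism $\psi\in\aut(G)$ of infinite order fixing each $P'_i$ setwise and satisfying $H=H_{P'_1,\ldots,P'_m}$. Fix a minimum separator $S$ and replace $\psi$ with a suitable power so that the translates $S^{\psi^k}$ are pairwise disjoint and, for every edge $ij\in E(H)$, a witness path $Q_{ij}$ from $P'_i$ to $P'_j$ (with internal vertices off $\bigcup_\ell V(P'_\ell)$) can be chosen inside the slab on the vertex set $[S,S^\psi]$. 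Write $v_i^{(k)}:=v_i^{\psi^k}$ where $\{v_i\}=P'_i\cap S$, and let $p_{i,k}$ denote the half-open backbone along $P'_i$ from $v_i^{(k)}$ (inclusive) up to but excluding $v_i^{(k+1)}$.

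Inside the slab $[S,S^\psi]$ the goal is to realise $H$ as a minor whose branch sets $\bar B_1,\ldots,\bar B_m$ have prescribed ``anchors'' $\bar B_i\supseteq p_{i,0}$. Starting from $\bar B_i:=p_{i,0}$, each edge $ij$ of $H$ is enriched as follows: pick a witness path $Q_{ij}$ and split it at an internal cut vertex, adding the portion from $P'_i$ to the cut into $\bar B_i$ and the portion from the cut to $P'_j$ into $\bar B_j$. This produces a $G$-edge between $\bar B_i$ and $\bar B_j$ at the cut while keeping both branch sets connected to their anchors. If several witnesses overlap so severely that this splitting cannot be performed disjointly within a single slab, I enlarge the fundamental domain by replacing $\psi$ with a higher power $\psi^r$; the resulting finite subgraph on $[S,S^{\psi^r}]$ provides ample room for a greedy assignment of the finitely many witnesses to pairwise-disjoint branch sets extending the enlarged anchors $\bigcup_{k=0}^{r-1} p_{i,k}$.

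Finally, set $B_{i,k}:=\psi^{k}(\bar B_i)$ for each $(i,k)\in[m]\times\ZZ$. The $B_{i,k}$ are pairwise disjoint, since consecutive slabs meet only at their bounding separators and the half-open convention assigns each boundary vertex to exactly one side, and each $B_{i,k}$ is connected as the $\psi^k$-image of the connected set $\bar B_i$. The edge $(i,k)(i,k+1)$ of $H\square P_\infty$ is witnessed by the $P'_i$-edge crossing $S^{\psi^{k+1}}$, and the edge $(i,k)(j,k)$ for $ij\in E(H)$ by the $\psi^k$-translate of the in-slab adjacency between $\bar B_i$ and $\bar B_j$. Collecting the remaining vertices of $G$ into additional singleton branch sets yields a partition of $V(G)$ exhibiting $H\square P_\infty$ as a minor of $G$.

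The hard part is the in-slab construction: the witness paths are not required to be pairwise internally disjoint, so different edges of $H$ may compete for the same internal vertex, and a naive splitting can force two branch sets to share a vertex or to become disconnected. Enlarging the fundamental domain to $[S,S^{\psi^r}]$ for sufficiently large $r$ supplies the slack needed for the combinatorial assignment to go through, a trick natural to the ``tile by $\psi$-iterates'' viewpoint that underlies the whole \cref{sec:twoEnds}.
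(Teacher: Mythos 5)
Your proof is correct and takes essentially the same route as the paper: invoke \cref{lem:paths:invariant} to get $\psi$-periodic paths and then tile $G$ by $\psi$-translates of a fundamental slab in which $H$ is realised once, gluing slabs along the $P'_i$ to obtain $H\square P_\infty$. The paper compresses this to the single observation that each edge of $H$ is now witnessed by infinitely many disjoint connections; you spell out the construction, and your remedy of replacing $\psi$ by a power $\psi^r$ (with $r\geq |E(H)|$, assigning distinct edges of $H$ to distinct $\psi$-translates of the original slab so their witnesses are automatically disjoint) is exactly the detail the paper leaves implicit. One small caveat: with your half-open backbone convention you should also insist that the original witnesses lie strictly in the interior of $[S,S^\psi]$ (achievable by a further power of $\psi$), as otherwise a witness endpoint landing on the bounding separator could collide with the next slab's backbone.
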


\begin{proof}
By \cref{lem:paths:invariant}, we can assume that there is
an automorphism~$\psi$ of infinite order leaving the paths~$P_1,\ldots,P_m$ invariant,
and thus each edge
$\{i,j\}$ in~$H_{P_1,\ldots,P_m}$ is realized by
infinitely many disjoint connections between the corresponding
paths~$P_i$ and~$P_j$ in~$G$.
\end{proof}

The next lemma shows that we can find paths such that $H_{P_1,\ldots,P_m}$
is vertex-transitive.

\begin{lem}\label{lem:pathsTransitive}
Let $G$ be a connected, locally finite and edge-transitive graph with two ends
and let $m\coloneqq|L_0|$ be the size of a (primary) level set.
There are vertex-disjoint paths~$P_1,\ldots,P_m$ connecting the two ends such that $H_{P_1,\ldots,P_m}$ is vertex-transitive.
\end{lem}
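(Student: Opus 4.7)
The plan is to construct the paths so that the stabiliser in $\aut(G)$ of the unordered path collection $\{P_1, \ldots, P_m\}$ acts transitively on the paths; since each such stabilising automorphism descends to a graph automorphism of $H_{P_1,\ldots,P_m}$, this immediately yields the desired vertex-transitivity of $H$.

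First I set up $\psi$-invariant paths that respect the level structure. By \cref{lem:paths:invariant} and \cref{lem:translation} I fix a translation $\psi \in \aut(G)$ of infinite order and, after replacing it by a suitable power, arrange that $\psi$ shifts the primary level partition $\{L_i\}_{i\in\ZZ}$ by some fixed integer $r > 0$. Combining Hall's theorem with the regularity and biregularity of the induced subgraphs on adjacent level sets (\cref{lem:levelSets}, Parts~\ref{level:regular} and~\ref{level:biregular}), I construct within the fundamental slab $[L_0, L_r]$ a family of $m$ vertex-disjoint paths realising the prescribed boundary bijection $v \mapsto v^\psi$ from $L_0$ to $L_r$. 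Translating via powers of $\psi$ then yields $m$ vertex-disjoint bidirectionally infinite paths $\{P_v\}_{v\in L_0}$ connecting the two ends, one through each $v\in L_0$.

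The key symmetry I exploit is that the stabiliser $\aut(G)_{L_0}$ of $L_0$ in $\aut(G)$ acts transitively on $L_0$: since $L_0 \subseteq O_1$ is a block of the $\aut(G)$-invariant partition $\{L_i\}$ (\cref{lem:levelSets}, Part~\ref{level:invariant}), any $\sigma \in \aut(G)$ sending $v \in L_0$ to some $v' \in L_0$ must fix $L_0$ setwise. If the path family $\{P_v\}_{v \in L_0}$ is chosen $\aut(G)_{L_0}$-invariant as an unordered set, then via the identification ``path $\leftrightarrow$ its $L_0$-vertex'' the transitive action on $L_0$ lifts to a transitive action on the paths, and hence on $V(H)$. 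To achieve this invariance, the finite set of valid disjoint path systems inside the slab $[L_0, L_r]$ realising the prescribed boundary bijection $v \mapsto v^\psi$ is permuted by $\aut(G)_{L_0}$, and I will pick a representative whose setwise stabiliser still acts transitively on $L_0$ before extending it by $\psi$-translation.

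The main obstacle is exactly this equivariant slab-choice: a naively chosen system of disjoint paths in the slab is unlikely to be preserved by the full group $\aut(G)_{L_0}$, since paths constructed independently through different vertices of $L_0$ may be forced to collide. My plan is to address this through a finite orbit-selection argument for the $\aut(G)_{L_0}$-action on slab path systems, using the combinatorial flexibility afforded by biregularity (\cref{lem:levelSets}, Part~\ref{level:biregular}) to ensure that some orbit contains a representative whose setwise stabiliser remains transitive on $L_0$.
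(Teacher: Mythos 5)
Your proposal and the paper take genuinely different routes, and yours has an unfilled gap at precisely the step you flag as the main obstacle. The paper does not try to make the path system itself invariant under a transitive subgroup of $\aut(G)$: instead it runs an extremal argument. Starting from an arbitrary $\psi$-invariant path system, if $H=H_{P_1,\ldots,P_m}$ fails to be vertex-transitive, one takes a pair $i,j$ not in the same $\aut(H)$-orbit, chooses $\phi\in\aut(G)$ with $\phi(L_0)=L_0$ sending the $L_0$-vertex of $P_i$ to that of $P_j$, and splices the original paths on $L\cup L_0$ with the $\phi$-images on $L_0\cup R$. Because every edge of $H$ is witnessed infinitely often on either side (by $\psi$-invariance), the spliced system's graph $H'$ contains both $H$ and $H^{\tilde\phi}$, which are distinct since $\tilde\phi\notin\aut(H)$; hence $|E(H')|>|E(H)|$, and the process terminates after at most $\binom{m}{2}$ steps at a vertex-transitive $H$.

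Your plan instead asserts the existence of a slab path system whose setwise stabiliser in $\aut(G)_{L_0}$ acts transitively on $L_0$. That would be a strictly stronger statement than the lemma: it would produce a vertex-transitive $H$ whose automorphisms \emph{lift} to $G$, whereas the paper only needs $H$ to be abstractly vertex-transitive (and indeed its final path system need not be invariant under any transitive subgroup). The existence of such an equivariant system is not established: your ``finite orbit-selection argument'' and appeal to ``combinatorial flexibility afforded by biregularity'' are placeholders, not a proof, and there is no a priori reason one orbit of slab path systems must have a transitive stabiliser. A further wrinkle is that you prescribe the boundary bijection $v\mapsto v^\psi$; for a system realising that bijection to be stabilised by some $\sigma\in\aut(G)_{L_0}$, you would need $\sigma$ to commute with $\psi$ on $L_0$, which is not given. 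So the correct observation that ``stabiliser of the path system $\Rightarrow$ automorphisms of $H$'' is fine, but the existence step on which the whole plan rests is missing, and I see no obvious way to supply it without essentially redoing the paper's edge-count induction.
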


\begin{proof}
By \cref{lem:levelSetPaths}, there are vertex-disjoint paths~$P_1,\ldots,P_m$ connecting the two ends,
and by Lemma~\ref{lem:paths:invariant}, we can assume that there is an automorphism~$\psi$
of infinite order mapping each path to itself.
We argue that if~$H\coloneqq H_{P_1,\ldots,P_m}$ is not vertex-transitive, we can choose different paths~$P'_1,\ldots,P'_m$ so that~$H'\coloneqq H_{P'_1,\ldots,P'_m}$ has more edges than~$H$. Since the $m$-vertex graph $H$ has at most $\binom{m}{2}$ edges, this eventually proves the lemma.
Suppose that~$H$ is not vertex-transitive, and thus there is no automorphism from~$i$ to~$j$ for some $i,j\in[m]$.
Let $L_0$ be a (primary) level set.
Since $L_0\subseteq O_1$, there is an automorphism $\phi\in\aut(G)$ that
maps the vertex of~$L_0$ belonging to $P_i$ to the vertex of~$L_0$ belonging to $P_j$.
Since the partition of $O_1$ into (primary) level sets is invariant under automorphisms (\cref{lem:levelSets} Part \ref{level:invariant}),
the automorphism $\phi$ stabilizes $L_0$ setwise.
Therefore, $\phi$ induces a permutation $\tilde\phi$ of $[m]$ (the indices of the paths) that maps $i$ to $j$.
Note that $P_1^\phi,\ldots,P_m^\phi$ are $m$ vertex-disjoint paths
that are invariant under the automorphism $\phi^{-1}\psi\phi$ of infinite order,
and also note that $H_{P_1^\phi,\ldots,P_m^\phi}=H^{\tilde\phi}$.

There are exactly two infinite connected components $L,R$ of $G-L_0$.
Let $P'_1,\ldots,P'_m$ be the paths that agree with $P_1,\ldots,P_m$ on $L\cup L_0$
and that agree with $P_1^\phi,\ldots,P_m^\phi$ on $L_0\cup R$
and define $H'\coloneqq H_{P_1',\ldots,P_m'}$.
It holds that $H'=H^{L\cup L_0}_{P_1,\ldots,P_m}\cup H^{L_0\cup R}_{P_1^\phi,\ldots,P_m^\phi}$.

Since $P_1,\ldots,P_m$ are invariant under some automorphism of infinite order,
every edge of $H$ is supported infinitely many times inside of~$L$, and thus $H\subseteq H'$.
Similarly (with $R$ in the role of $L$), the paths $P_1^\phi,\ldots,P_m^\phi$ are invariant under some automorphism of infinite order,
and thus $H^{\tilde\phi}\subseteq H'$.
However, the graphs $H$ and $H^{\tilde\phi}$ are not identical,
otherwise the permutation $\tilde\phi$ of $[m]$ would
be an automorphism of $H$ mapping $i$ to $j$.
Therefore, the graph $H'$ is a proper supergraph of $H$.
\end{proof}

Thus, we can find paths such that $H_{P_1,\ldots,P_m}$ is vertex-transitive.
Note that finite connected vertex-transitive graphs with at least three vertices are 2-connected.
We make a case distinction of $H$ being a cycle or not,
and each case is handled separately in one of the following two lemmas.

\begin{lem}\label{lem:two:ends:H:cylindrical:or:small}
Let~$H$ be a finite 2-connected graph with $n\geq 3$ vertices that is not a cycle. Then, the graph $H\square P_{\infty}$ has a~$K_{n-1}$-minor.
\end{lem}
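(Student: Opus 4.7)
Since a connected $2$-regular graph is a cycle, the hypotheses on $H$ yield a vertex $v^*\in V(H)$ with $\deg_H(v^*)\ge 3$. Enumerate $V(H)\setminus\{v^*\}$ as $v_1,\ldots,v_{n-1}$ and for each $v\in V(H)$ let $R_v:=\{v\}\times\ZZ$ be the \emph{row} of $v$ in $H\square P_\infty$. Each $R_v$ is an infinite path, and two rows $R_u$ and $R_v$ are joined by edges at every level precisely when $uv\in E(H)$. The plan is to build branch sets $B_1,\ldots,B_{n-1}$ of a $K_{n-1}$-minor so that each $B_i$ coincides with $R_{v_i}$ outside a finite set of levels, with the ``leftover'' row $R_{v^*}$ carved into disjoint finite arcs and distributed among the $B_i$'s to supply the missing adjacencies.

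For any pair $\{v_i,v_j\}\in E(H-v^*)$ the rows $R_{v_i}$ and $R_{v_j}$ are adjacent at every level, so no patching is needed. For each of the at most $\binom{n-1}{2}$ non-edges $\{v_i,v_j\}$ of $H-v^*$, I reserve a distinct long finite interval $I_{ij}\subset\ZZ$, chosen pairwise disjoint (possible since $\ZZ$ is infinite). Inside $I_{ij}$ I insert into $B_i$ a bridge path connecting $R_{v_i}$ to $R_{v_j}$ in $H\square P_\infty$. If $v_i,v_j\in N_H(v^*)$ the bridge is simply $R_{v_i}\to R_{v^*}\to R_{v_j}$, using one vertex of $R_{v^*}$ in $I_{ij}$. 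Otherwise, $2$-connectedness of $H$ furnishes an alternate $v_i$-$v_j$-route in $H$ whose internal vertices lie in rows $R_{v_m}$ currently occupied by other branches $B_m$; a finite segment of each such $R_{v_m}$ is then reassigned from $B_m$ to the bridge, and $B_m$ is rerouted locally through $R_{v^*}$ via the edge $v_mv^*\in E(H)$, possibly cascading through further rows until the detour lands on a neighbor of $v^*$.

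Pairwise disjointness of the $I_{ij}$ keeps each $B_i$ altered only on finitely many levels and keeps the $B_i$'s pairwise vertex-disjoint. Inside any single interval the bounded number of simultaneous detours of $R_{v^*}$ fits without conflict, using $\deg_H(v^*)\ge 3$ to provide enough parallel lanes and making $I_{ij}$ sufficiently long, so every $B_i$ remains connected. Finally every pair $\{B_i,B_j\}$ is joined: pairs with $v_iv_j\in E(H-v^*)$ by the natural row-to-row edges outside all intervals, and the remaining pairs by the explicit bridge built in $I_{ij}$.

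The principal obstacle is the cascading case where both $v_i$ and $v_j$ lie outside $N_H(v^*)$: the bridge must chain through several intermediate rows, each of whose own detour cascades further until it reaches a neighbor of $v^*$. Here $2$-connectedness of $H$ ensures that such cascades always terminate, and the assumption $\deg_H(v^*)\ge 3$ rather than merely $\ge 2$ is what supplies the parallel capacity on $R_{v^*}$ needed to host all simultaneous detours inside one interval without collision; the infinite extent of $P_\infty$ provides the room required for the bookkeeping.
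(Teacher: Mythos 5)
Your approach is genuinely different from the paper's: the paper encodes the branch sets as a ``sliding puzzle'' on $H$ (with $n-1$ pebbles and one gap that circulates, realizing each pairwise adjacency at a distinct level), whereas you fix a high-degree hub $v^*$, keep each branch essentially glued to its row, and try to punch per-pair bridges through $R_{v^*}$ with local detours. The framing is reasonable, but as written there is a genuine gap in the cascading-detour step, and I do not think the argument closes.

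The claim that $\deg_H(v^*)\geq 3$ supplies ``parallel lanes'' on $R_{v^*}$ does not stand up: $R_{v^*}$ is a single copy of $P_\infty$, so there are no parallel lanes in it; two detours that need $R_{v^*}$ at overlapping levels simply collide. More importantly, the assertion that ``2-connectedness ensures that cascades always terminate'' is exactly the content that needs proof, and it is far from obvious. Consider a theta graph: two degree-$3$ vertices $a,b$ joined by three long internally-disjoint paths, with $v^*=a$. Take $v_i,v_j$ near the middles of two different paths. Any $v_i$--$v_j$-route in $H$ must traverse a long stretch of degree-$2$ vertices none of which is adjacent to $v^*$, so the bridge steals a vertex from each of many consecutive rows. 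Each displaced row $R_{v_m}$ has only two neighbouring rows, both of which are either the bridge itself or another displaced row; the detours cannot terminate in one or two hops on a neighbour of $v^*$, and instead must themselves be rerouted globally around the bridge (through $b$ or the third path), displacing yet more tubes over a long range of levels. Your proposal does not establish that this global reshuffle can be carried out consistently. In fact, once you try to make the reshuffle precise you essentially rediscover the need for a global ``slide everything along a cycle'' motion, which is the content of the paper's pebble-puzzle argument; the paper's proof crucially uses 2-connectedness to put the two target pebbles and the gap on a common cycle and then uses a chord (the non-cycle hypothesis) to rotate one pebble off the cycle and bring the pair together, and this is where the real work happens. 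That mechanism has no analogue in your ``local detour through $R_{v^*}$'' picture. To repair the proof you would need to replace the hand-waved cascading step by an explicit routing argument inside each interval $I_{ij}$, and the natural way to do that is precisely the sliding-puzzle routing; at that point the hub vertex $v^*$ and the degree-$\geq 3$ hypothesis play no special role.
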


\begin{proof}
We construct a minor with~$n-1$ branch sets $B_1,\ldots,B_{n-1}\subseteq V(H\square P_{\infty})$ as follows.
Each slice~$S_i\coloneqq V(H) \times \{i\}\subseteq V(H\square P_{\infty})$ will contain at least one vertex from each branch set, i.e.,
$S_i\cap B_j\neq\emptyset$.
Exactly one branch set will intersect the slice in two vertices, and these two vertices form an edge, i.e., for each $i$ there
is exactly one $B_j$ such that $e_i\coloneqq S_i\cap B_j$ is a 2-element subset, and for this set it holds that $e_i\in E(H\square P_{\infty})$.
In the next slice~$S_{i+1}$ the same vertices of~$H$ will intersect the same
branch sets except that for one vertex~$v\in V(H)$ the branch set will be different in~$S_i$ compared
to $S_{i+1}$, i.e.,~$(v,i)$ and~$(v,i+1)$ are in distinct branch sets for exactly one~$v\in V(H)$.
In each of the slices this vertex has to be the vertex which appears in the branch set that contains two vertices,
i.e., $(v,i)\in e_i,(v,i+1)\in e_{i+1}$.
It suffices now to construct the branch sets so that for each pair of branch sets~$B,B'$
there is a slice~$S_i,i\in\ZZ$ so that~$B\cap S_i$ and~$B'\cap S_i$ are adjacent.

Overall, this translates into the following sliding puzzle. Consider the graph~$H$
and suppose there are~$n-1$ pebbles (corresponding to the branch sets) placed on~$n-1$ different vertices of the graph.
A legal move is to move one pebble across an edge to the previously unoccupied spot, called \emph{gap} in the following.
The edge across which a pebble slides in step~$i$ corresponds to the edge~$e_i$.
To solve the puzzle, the task is to perform a sequence of legal moves so that over time each pair of
pebbles was situated on adjacent vertices at some point.

We now argue that if~$H$ is 2-connected and not a cycle, then the puzzle is solvable. Let~$p$ and~$p'$ be non-adjacent pebbles on~$H$.
We first observe that we can perform a sequence of moves so that~$p$,~$p'$ and the gap lie on a common cycle. Indeed,~$p$ and~$p'$ lie on a common cycle~$C$ since $H$ is 2-connected. Due to 2-connectivity, there are two shortest paths~$P$ and~$P'$ from the gap to~$C$
which are vertex-disjoint (except on the gap).
If one of these paths does not end in~$p$ or~$p'$,
we can directly move the gap onto the cycle.
Otherwise, the paths $P$ and $P'$ end in~$p$ and~$p'$, respectively.
In that case, the paths~$P$ and~$P'$ together with a path in~$C$ joining~$p$ and~$p'$ form the desired cycle.

Let~$\tilde C$ be a cycle containing~$p$,~$p'$ and the gap.
Since~$H$ is not a cycle and due to 2-connectivity, there is a path~$\tilde P$ whose endpoints~$v,v'$ are distinct vertices on~$\tilde C$ and whose internal vertices are not on~$\tilde C$. We rotate the cycle~$\tilde C$ (by moving the gap on the cycle) so that~$p$ is located on~$v$, and we then move the gap along~$\tilde C$ without moving~$p$ so that the gap is located on~$v'$.
On~$\tilde C$ there are two paths~$\tilde P_1,\tilde P_2$ from~$v$ to~$v'$ and with each of them the path~$\tilde P$ forms a cycle.
One of these cycles~$\tilde P_1\cup \tilde P,\tilde P_2\cup \tilde P$ does not contain~$p'$. 
We may assume that $\tilde P_1$ does not contain $p'$.
We argue that we may assume that~$\tilde P$ has an internal vertex. Indeed, if~$\tilde P$ does not have an internal vertex, then~$\tilde P_1$ must have an internal vertex.
In that case, we interchange the names of~$\tilde P$ and~$\tilde P_1$, thereby replacing~$\tilde C$ with~$(\tilde C\setminus \tilde P_1) \cup \tilde P$.

We rotate the cycle~$\tilde P_1\cup \tilde P$ by one so that the
gap remains on~$\tilde C$ but the pebble $p$ that was on $v$
is now on the internal vertex of~$\tilde P$ adjacent to~$v$.
Finally, we rotate~$\tilde C$ to move~$p'$ to~$v$, making~$p$ and~$p'$ adjacent.
\end{proof}

\begin{cor}\label{cor:small}
Let $G$ be a connected, locally finite, $K_{h+1}$-minor-free and edge-transitive graph with two ends.
Let $m\coloneqq|L_0|$ be the size of a (primary) level set
and let $P_1,\ldots,P_m$ be
vertex-disjoint paths connecting the two ends.
If $H\coloneqq H_{P_1,\ldots,P_m}$ is 2-connected (with at least three vertices) and not a cycle, then $m\leq h+1$.
\end{cor}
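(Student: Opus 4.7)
The plan is to chain together the two preceding results essentially mechanically. The graph $H=H_{P_1,\ldots,P_m}$ has exactly $m$ vertices by definition. Since by hypothesis $H$ is 2-connected, has at least three vertices and is not a cycle, \cref{lem:two:ends:H:cylindrical:or:small} applies with $n=m$ and yields that $H\square P_\infty$ contains $K_{m-1}$ as a minor.

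Next I would invoke \cref{cor:HisMinor}, which tells us that $H\square P_\infty$ is itself a minor of $G$. Since the minor relation is transitive, it follows that $K_{m-1}$ is a minor of $G$.

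Finally, because $G$ is $K_{h+1}$-minor-free, the largest complete graph appearing as a minor of $G$ has at most $h$ vertices, so $m-1\leq h$, giving the desired bound $m\leq h+1$.

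There is no real obstacle here: the only thing to double-check is the hypothesis requirement ``$n\geq 3$'' in \cref{lem:two:ends:H:cylindrical:or:small}, which is directly provided by the assumption in the statement that $H$ has at least three vertices. All the substantive work has already been done in \cref{cor:HisMinor} (extracting $H\square P_\infty$ as a minor from $G$ via the translation automorphism) and in \cref{lem:two:ends:H:cylindrical:or:small} (the sliding puzzle argument producing the $K_{n-1}$-minor).
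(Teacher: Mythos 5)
Your proof is correct and follows exactly the same route as the paper: invoke \cref{cor:HisMinor} to see that $H\square P_\infty$ is a minor of $G$, invoke \cref{lem:two:ends:H:cylindrical:or:small} with $n=m$ to get a $K_{m-1}$-minor, and conclude $m-1\leq h$ from $K_{h+1}$-minor-freeness. The only difference is that you spell out the transitivity step, which the paper leaves implicit.
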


\begin{proof}
By \cref{cor:HisMinor},
the Cartesian product $H_{P_1,\ldots, P_m}\square P_\infty$ is a minor of $G$, and
we conclude from Lemma~\ref{lem:two:ends:H:cylindrical:or:small} that~$m\leq h+1$.
\end{proof}

The \emph{twisted cylindrical grid of thickness $k$} is the infinite graph $G$
with vertex set $V(G)=\{(i,j)\mid i\in\ZZ, j\in[k]$\}
and edge set $E(G)=\{(i,j)(i+1,j')\mid i\in\ZZ, j'=j\text{ or }j'-1\equiv j \mod k\}$.
See \cref{fig:cylindrical}.

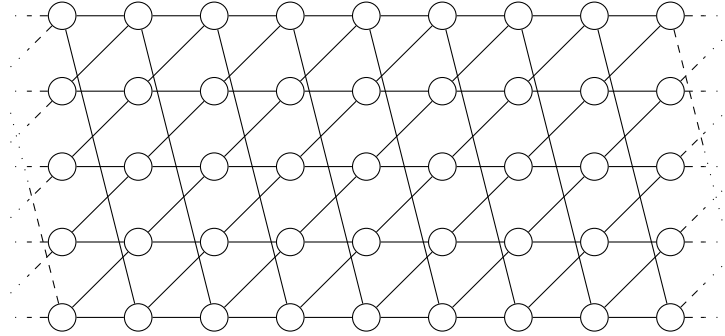
\begin{figure}[h]
\begin{center}
\begin{tikzpicture}
\foreach \i in {1,...,9}{
\foreach \j in {0,...,4}{

\node[draw,circle] (\i\j) at (\i,\j) {};

}}

\foreach \i in {0,10}{
\foreach \j in {0,...,4}{

\node (\i\j) at (\i,\j) {};

}}

\foreach \i [evaluate={\ii=int(\i+1)}] in {1,...,8}{
\foreach \j in {0,...,4}{
\foreach \x [evaluate={\jj=int(mod(\j+\x,5))}] in {0,1}{

\draw (\i\j) -- (\ii\jj);

}}}

\foreach \i\ii in {9/10}{
\foreach \j in {0,...,4}{
\foreach \x [evaluate={\jj=int(mod(\j+\x,5))}] in {0,1}{

\draw (\i\j) edge[dashed] ($(\i\j)!0.45!(\ii\jj)$);
\draw ($(\i\j)!0.45!(\ii\jj)$) edge[loosely dotted] ($(\i\j)!0.7!(\ii\jj)$);

}}}

\foreach \i\ii in {1/0}{
\foreach \j in {0,...,4}{
\foreach \x [evaluate={\jj=int(mod(\j-\x+5,5))}] in {0,1}{

\draw (\i\j) edge[dashed] ($(\i\j)!0.45!(\ii\jj)$);
\draw ($(\i\j)!0.45!(\ii\jj)$) edge[loosely dotted] ($(\i\j)!0.7!(\ii\jj)$);

}}}

\end{tikzpicture}
\end{center}
\caption{Twisted cylindrical grid of thickness 5.\label{fig:cylindrical}}
\end{figure}

\begin{lem}\label{lem:cylindrical}
Let $G$ be a connected, locally finite, $K_{h+1}$-minor-free, twin-free and edge-transitive graph with two ends.
Let $m\coloneqq|L_0|$ and let $P_1,\ldots,P_m$ be
vertex-disjoint paths connecting the two ends.
If $H\coloneqq H_{P_1,\ldots,P_m}$ is a cycle (with at least three vertices), then $m\leq h+1$ or $G$ is a subdivision of the twisted cylindrical grid.
\end{lem}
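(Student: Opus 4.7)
\emph{Setup.} I use the cyclic structure of $H = C_m$ to label the vertices of each primary level $L_i$: writing $v_{i, j}$ for the unique vertex of path $P_j$ in $L_i$ (with indices $j \in \ZZ/m\ZZ$ chosen so that cyclic adjacency of indices matches the cycle structure of $H$), \cref{lem:levelSetPaths} ensures these $m$ labels exhaust $L_i$. The key structural constraint follows directly from the definition of $H$: every edge $v_{i, j} v_{i+1, k}$ of $G$ with $j \neq k$ is itself a connection between paths $P_j$ and $P_k$ with no internal vertices on any path, so $H = C_m$ forces $k \equiv j \pm 1 \pmod m$. Similarly, in the two-orbit case, each secondary vertex $w \in J_{i+1/2}$ must satisfy $N(w) \subseteq \{v_{i, j}, v_{i, j+1}, v_{i+1, j}, v_{i+1, j+1}\}$ for some $j$, as otherwise a $2$-path through $w$ would witness a forbidden chord of $H$.

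\emph{Vertex-transitive case.} Let $U = \{k - j \pmod m : v_{i, j} v_{i+1, k} \in E(G)\} \subseteq \{-1, 0, 1\}$; by vertex-transitivity $U$ does not depend on $(i, j)$. I eliminate all cases except $U = \{0, 1\}$ (or its mirror $\{-1, 0\}$): the case $|U| = 1$ disconnects $G$ into parallel infinite paths; the case $U = \{-1, 1\}$ preserves the parity of $i + j$ along every edge, splitting $G$ into two components; and the case $|U| = 3$ produces the ``triangular cylindrical grid'', whose Cayley-graph structure on $\ZZ \times (\ZZ/m\ZZ)$ with generators $\{(1, -1), (1, 0), (1, 1)\}$ admits a group automorphism permuting those generators (as required for edge-transitivity) only when $m = 3$, in which case $m \leq h + 1$ is immediate. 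The remaining case $U = \{0, 1\}$ is exactly the twisted cylindrical grid.

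\emph{Two-orbit case and main obstacle.} Edge-transitivity together with the two-orbit structure forces the existence of an end-swapping automorphism (otherwise the ``up-edges'' and ``down-edges'' incident to a secondary vertex $w$ would be in distinct $\aut(G)$-orbits), which in turn forces each such $w$ to have equal numbers of neighbors in $L_i$ and $L_{i+1}$. Combined with the neighborhood bound, the biregularity of $G[L_i, J_{i+1/2}]$ and $G[J_{i+1/2}, L_{i+1}]$ (\cref{lem:levelSets}, Part~\ref{level:biregular}), and the twin-free hypothesis (which forbids multiple $w$'s sharing a neighborhood), this leaves only two possibilities: $w$ has degree $2$, subdividing a ``vertical'' pair $(v_{i, j}, v_{i+1, j})$ or a ``diagonal'' pair $(v_{i, j}, v_{i+1, j+1})$, in which case $G$ is a $1$-subdivision of an underlying twisted cylindrical grid; or $w$ has degree $4$, giving a ``face-centered'' configuration. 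The trickiest step, which is the main obstacle, is excluding the degree-$4$ case: I show it admits a duality-type automorphism -- analogous to the translation-by-$(\frac{1}{2}, \frac{1}{2})$ map on a lattice -- swapping the two orbits and identifying the face-centered graph with a vertex-transitive twisted cylindrical grid (of thickness $m$), so this configuration is not genuinely two-orbit and the relevant case has already been handled. Propagating this local duality globally uses the infinite-order, path-preserving automorphism from \cref{lem:paths:invariant}.
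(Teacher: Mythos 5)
Your proposal takes a genuinely different route from the paper, attempting to classify the grid structure directly via edge-transitivity constraints rather than using the paper's central trick: whenever the bipartite layers $G[L_i,L_{i+1}]$ (or $G[L_i,J_{i+1/2}]$) carry more edges than a single cycle would explain, one can \emph{swap two paths} at a ``crossing edge set'' to obtain a new path system $P'_1,\dots,P'_m$ whose associated $H'=H_{P'_1,\dots,P'_m}$ is a proper supergraph of the cycle. Then $H'$ is $2$-connected with $m\geq 4$ vertices and not a cycle, so \cref{cor:HisMinor} and \cref{lem:two:ends:H:cylindrical:or:small} give a $K_{m-1}$-minor of $G$, hence $m\leq h+1$. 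This minor-theoretic conclusion is exactly what the lemma asks for, and it is robust: it never needs to decide whether a given grid-like graph actually is edge-transitive.

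Your argument has two concrete gaps that the paper's approach deliberately avoids. First, in the vertex-transitive case with $U=\{-1,0,1\}$ (the paper's $(3,3)$-regular subcase), you assert that the ``triangular cylindrical grid'' admits a generator-permuting automorphism -- hence is edge-transitive -- only when $m=3$. This is a non-trivial claim: automorphisms of a graph need not come from linear maps respecting the Cayley presentation on $\ZZ\times\ZZ_m$, and you give no argument ruling out exotic graph automorphisms for $m\geq 4$. The paper sidesteps this entirely: when $U=\{-1,0,1\}$, a crossing edge set $\{v_\ell w_\ell,v_\ell w_{\ell+1},v_{\ell+1}w_\ell,v_{\ell+1}w_{\ell+1}\}$ exists for free, the path swap increases the number of edges of $H$, and $m\leq h+1$ follows -- no classification of edge-transitive grids is needed.

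Second, and more seriously, in the two-orbit case your treatment of the degree-$4$ (``face-centered'') secondary vertices relies on constructing a ``duality-type automorphism'' that swaps the two orbits and identifies the graph with a vertex-transitive twisted cylindrical grid. You flag this as the trickiest step, but you do not actually construct this automorphism, and there is no a priori reason it should exist: in general an edge-transitive two-orbit graph need not embed into a vertex-transitive one in this way. Your subsidiary claim that edge-transitivity forces an end-swapping automorphism is also asserted without proof, and it is what you use to rule out odd secondary degree; edge-transitivity only guarantees that all edges lie in one $\aut(G)$-orbit, not that the vertex stabilizer of a secondary vertex acts symmetrically on its up- and down-neighbors. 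The paper's two-orbit argument avoids these constructions: since $|J_{i-1/2}|>|L_i|$ and $H$ is a cycle, two secondary vertices $v,v'$ share the same two-element neighborhood in $L_i$; if neither lies on the paths they are twins (contradiction with twin-freeness), and in the remaining cases one again obtains a crossing edge set, swaps paths, and concludes $m\leq h+1$ via the same minor lemma. In short, the twin-free hypothesis is the mechanism that rules out your degree-$4$ configuration -- not a hidden duality automorphism.
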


\begin{proof}
By possibly applying \cref{lem:paths:invariant},
we can assume that the paths $P_1,\ldots,P_m$ are invariant under some
automorphism of infinite order.
By possibly renaming the indices, we can assume that $H$ is the cycle $1,\ldots,m,1$.
Furthermore, we can assume that $m\geq 4$, otherwise if $m= 3$, then $h\geq 3$ and thus $m=3\leq h+1$.
\icase{1. $G$ is vertex-transitive}
Fix some $i\in\ZZ$ and
let $v_1,\ldots,v_m$ be the vertices of $P_1,\ldots,P_m$ in $L_i$
and $w_1,\ldots,w_m$ be the vertices of $P_1,\ldots,P_m$ in $L_{i+1}$.
Also, define $v_{m+1}\coloneqq v_1,w_0\coloneqq w_m,w_{m+1}\coloneqq w_1$.
We call a set of edges $X\subseteq E(G)$ is \emph{crossing} if
there is some $\ell\in[m]$ such that
$X\coloneqq\{v_{\ell}w_{\ell},v_{\ell}w_{\ell+1},v_{\ell+1}w_{\ell},v_{\ell+1}w_{\ell+1}\}$.
  
\begin{claim}
The graph $G$ is the twisted cylindrical grid,
or $G[L_i,L_{i+1}]$ has a crossing edge set $X$.
\end{claim}

\begin{claimproof}
Since $H$ is a cycle, the degrees in $G[L_i,L_{i+1}]$ are at least two
and at most three.
Consider the subgraph $H'\coloneqq H^{[L_i,L_{i+1}]}_{P_1,\ldots,P_m}\subseteq H$
and the directed graph $H_{\rightarrow}'$ where 
there is a directed edge $(\ell,\ell')$ for each edge $\{\ell,\ell'\}\in E(H')$
for which $v_\ell w_{\ell'}\in E(G)$.
(Note that an unordered pair $\{\ell,\ell'\}\in E(H'),\ell\neq\ell'$ has a directed edge in both directions
if and only if $X\coloneqq\{v_{\ell}w_{\ell},v_{\ell}w_{\ell'},v_{\ell'}w_{\ell},v_{\ell'}w_{\ell'}\}$
is a crossing edge set).
If $G[L_i,L_{i+1}]$ is $(2,2)$-biregular, then
the directed graph $H_{\rightarrow}'$ has only vertices with an indegree and outdegree of 1,
and thus it is a disjoint union of directed cycles.
If $H_{\rightarrow}'$ is a single directed cycle, then
$E(G[L_i,L_{i+1}])=\{v_\ell w_\ell\mid \ell\in[m]\}\cup\{v_\ell w_{\ell'}\mid\ell\in[m]\}$
where $\ell'\in\{\ell+1,\ell-1\}$,
and in particular, $G[L_i,L_{i+1}]$ and $G[L_{i+1},L_i]$ are isomorphic.
By using \cref{lem:levelSets}, we conclude that the graphs $G[L_k,L_{k+1}],k\in\ZZ$
are all pairwise isomorphic and capture all edges of $G$,
and thus $G$ is the twisted cylindrical grid.
If $H_{\rightarrow}'$ is a disjoint union of more than
one directed cycle, then these directed cycles must be of length 2 (two vertices with two directed edges)
since $H'$ is a subgraph of $H$.
But if $(\ell,\ell'),(\ell',\ell)$ both are directed edges in $H_{\rightarrow}'$, then
$X\coloneqq\{v_{\ell}w_{\ell},v_{\ell}w_{\ell'},v_{\ell'}w_{\ell},v_{\ell'}w_{\ell'}\}\subseteq E(G)$
is a crossing edge set.

In the remaining case,
we assume that $G[L_i,L_{i+1}]$ is $(3,3)$-regular,
and we show that there is a crossing edge set $X$.
Indeed, in that case for each~$\ell\in[m]$ the neighborhood of $v_\ell$
is precisely $\{w_{\ell-1},w_{\ell},w_{\ell+1}\}$.
Thus, the set $X\coloneqq\{v_{\ell}w_{\ell},v_{\ell}w_{\ell+1},v_{\ell+1}w_{\ell},v_{\ell+1}w_{\ell+1}\}\subseteq E(G)$
is a crossing edge set.
This proves the claim.
\end{claimproof}

In case $G[L_i,L_{i+1}]$ has a crossing edge set $X$ for some $i\in\ZZ$, we use
the crossing edge set $X$ to define new paths as follows. Formally, we
define paths $P_1',\ldots,P_m'$ as the symmetric difference
of the edge sets of $P_1,\ldots,P_m$ and the set of edges $X$.
These are basically the same paths except
that some end of two paths is swapped.
We consider the graph $H'\coloneqq H_{P_1',\ldots,P_m'}$.
Since the paths $P_1,\ldots,P_m$ are invariant under
some automorphism of infinite order, each edge in $H$ is supported infinitely many times.
Thus, if $L$ and~$R$ are the two infinite connected components of $G-(L_i\cup L_{i+1})$,
then $H^L_{P_1',\ldots,P_m'}$ and $H^R_{P_1',\ldots,P_m'}$ are both cycles.
However, these two cycles are not identical due to the swap of two paths (and since $m\geq 4$).
Therefore, the graph $H'$ is a proper supergraph of a cycle,
and in particular 2-connected.
By \cref{cor:HisMinor}, 
the graph $H'\square P_\infty$ is a minor of $G$,
and it follows from \cref{lem:two:ends:H:cylindrical:or:small} that $m\leq h+1$.

\icase{2. $G$ is not vertex-transitive}
In that case~$G$ has a second orbit $O_2\neq O_1$.

If $|J_{i-\frac{1}{2}}|=|L_i|$, then we can use the same arguments as in the vertex-transitive case
and conclude that $m\leq h+1$ or $G$ is a twisted cylindrical grid.
Otherwise, we have that $|J_{i-\frac{1}{2}}|> m$. If the vertices in $J_{i-\frac{1}{2}}$ have degree 2,
then we can dissolve them by deleting each such vertex and adding an edge between its two neighbors.
This gives us a vertex- and edge-transitive (topological) minor of $G$.
Therefore, also in this case, we conclude that $m\leq h+1$ or $G$
is a subdivision of the twisted cylindrical grid.

In the remaining case, we have that $|J_{i-\frac{1}{2}}|> m$ and the degree of vertices in $J_{i-\frac{1}{2}}$ within the graph $G[J_{i-\frac{1}{2}},L_i]$
is at least 2 (and at least~4 in~$G$). In the following, we argue that the degrees are exactly 2.
Let $v_1,\ldots,v_m$ be the vertices of $P_1,\ldots,P_m$ in $J_{i-\frac{1}{2}}$
and $w_1,\ldots,w_m$ be the vertices of $P_1,\ldots,P_m$ in $L_i$.
Also, define $v_{m+1}\coloneqq v_1,w_{m+1}\coloneqq w_1$.
Since $|J_{i-\frac{1}{2}}|> m$ there is a vertex $v\in J_{i-\frac{1}{2}}$
that is not contained in the paths $P_1,\ldots,P_m$.
Since $H$ is a cycle of length~$m\geq 4$, the neighborhood of the vertex $v$ in $L_i$ can only consist of two vertices $w_\ell,w_{\ell+1}$
for some $\ell\in[m]$.
Then, since $G[J_{i-\frac{1}{2}},L_i]$ is biregular (\cref{lem:levelSets}),
all vertices in $J_{i-\frac{1}{2}}$ have degree 2 in $G[J_{i-\frac{1}{2}},L_i]$ (and~4 in~$G$).

In the following, we will use that
$G$ is twin-free in order to find a crossing edge set (that will be defined similarly to the vertex-transitive case).
Consider the 2-element sets $N(v)\cap L_i$ for all vertices $v\in J_{i-\frac{1}{2}}$
(including the vertices that are contained in the fixed paths $P_1,\ldots,P_m$).
Since $H$ is a cycle, it holds that $N(v)\cap L_i=\{w_\ell w_{\ell+1}\}$ for some $\ell\in[m]$.
Since $|J_{i-\frac{1}{2}}|>|L_i|$, there are two vertices $v,v'$ having the same neighborhood in $L_i$,
i.e., there is an $\ell^*\in[m]$ such that $N(v)\cap L_i=\{w_{\ell^*},w_{\ell^*+1}\}=N(v')\cap L_i$.

Consider the case that none of the vertices $v,v'$
is contained in the paths $P_1,\ldots,P_m$.
Let $u_1,\ldots,u_m$ be the vertices of $P_1,\ldots,P_m$ in $L_{i-1}$, and set $u_{m+1}\coloneqq u_1$.
Then, it holds that $N(v)\cap
L_{i-1}=\{u_{\ell^*},u_{\ell^*+1}\}=N(v')\cap L_{i-1}$
(for the same $\ell^*\in[m]$ as above)
since if $u_k\in N(v)\cap L_{i-1}$ for $k\notin\{\ell^*,\ell^*+1\}$, then there would be
three paths $u_k,v,w_{\ell^*}$ and $u_k,v,w_{\ell^*+1}$ and $w_{\ell^*},v,w_{\ell^*+1}$,
contradicting that $H$ is a cycle of length~$m\geq 4$.
Thus, $v$ and $v'$ are twins, contradicting that $G$ is twin-free.

Consider the case that both vertices $v,v'$
are contained in the paths $P_1,\ldots,P_m$.
Since $v,v'$ have the same neighborhood in $L_i$,
there is some $\ell\in[m]$ such that $\{v,v'\}=\{v_\ell,v_{\ell+1}\}$.
Without loss of generality, we can assume that $v=v_\ell,v'=v_{\ell+1}$.
Then, we can find a crossing edge set
and swap the paths $P_\ell$ and $P_{\ell+1}$ as follows.
We define new paths $P_1',\ldots,P_m'$
by deleting the two edges $v_\ell w_\ell ,v_{\ell+1} w_{\ell+1}$
and adding the two edges $v_{\ell} w_{\ell+1},v_{\ell+1} w_\ell$.
With the same argument as in the vertex-transitive case,
the new paths $P_1',\ldots,P_m'$ lead to a new graph $H'$ that is 2-connected and not a cycle
such that $H'\square P_\infty$ is a minor of $G$.
We conclude from Lemma~\ref{lem:two:ends:H:cylindrical:or:small} that~$m\leq h+1$.

Finally, consider the case that the vertex $v$, but not $v'$, is contained in
the paths $P_1,\ldots,P_m$.
Without loss of generality assume that $v=v_\ell$
and that $N(v)\cap L_i=\{w_{\ell},w_{\ell+1}\}=N(v')\cap L_i$.
It holds that $N(v')\cap L_{i-1}=\{u_\ell,u_{\ell+1}\}$
(since $H$ is a cycle and $v'$ is not contained in the paths).
Clearly, it holds that $u_\ell\in N(v_\ell)\cap L_{i-1}$.
Again, we can swap the two paths.
We delete all edges in the two paths $u_\ell,v_\ell,w_\ell$ and $u_{\ell+1},v_{\ell+1},w_{\ell+1}$
and add the edges in the two paths $u_\ell,v_\ell,w_{\ell+1}$
and $u_{\ell+1}, v',w_\ell$.
\end{proof}

Overall, we obtain the following lemma concluding this section.

\begin{lem}\label{lem:comp:factors:of:two:end:vertex:trans}
Let $G$ be a connected, locally finite, $K_{h+1}$-minor-free, twin-free and edge-transitive graph with two ends.
Then, the automorphism group of~$G$ is a~$\Gamma_{h+1}$-group.
\end{lem}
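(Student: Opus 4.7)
The plan is to combine the dichotomy from \cref{cor:small} and \cref{lem:cylindrical} with the level-set structure from \cref{lem:levelSets}, presenting $\aut(G)$ as an extension of a subgroup of $S_{h+1}$ by a subgroup of the infinite dihedral group $D_\infty = \ZZ \rtimes \ZZ_2$.

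First I would apply \cref{lem:pathsTransitive} to obtain vertex-disjoint paths $P_1,\ldots,P_m$ making $H \coloneqq H_{P_1,\ldots,P_m}$ vertex-transitive, where $m \coloneqq |L_0|$. The dichotomy from \cref{cor:small} and \cref{lem:cylindrical} then yields: either $m \leq h+1$, or $G$ is a subdivision of a twisted cylindrical grid of some thickness $k$. In the latter case, one checks directly that $\aut(G)$ embeds into $(\ZZ \times \ZZ_k) \rtimes \ZZ_2$, a metabelian group all of whose composition factors are cyclic; hence $\aut(G) \in \Gamma_{h+1}$ by closure under cyclic and abelian-infinite extensions, with no further bound on $k$ required (the underlying grid is planar, hence already $K_5$-minor-free, so $k$ need not be controlled).

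In the main case $m \leq h+1$: by \cref{lem:levelSets} the partition of $V(G)$ into primary and secondary level sets is $\aut(G)$-invariant and carries a linear order up to reversal of the two ends. The induced action gives a homomorphism $\tau\colon \aut(G) \to D_\infty$, whose image $Q$ embeds into $D_\infty$ and hence lies in $\Gamma_{h+1}$. Let $K \coloneqq \ker \tau$; then $K$ stabilizes each level set setwise and acts on $L_0$ via a map $K \to \sym(L_0) \leq S_m \leq S_{h+1}$. The central step is to show this action is faithful, so that $K$ itself embeds into $S_{h+1}$. Given $\phi \in K$ with $\phi|_{L_0} = \id$, I would use twin-freeness together with the biregular bipartite structure from \cref{lem:levelSets} parts \ref{level:regular} and \ref{level:biregular} and edge-transitivity to propagate the fixed-point property outward: twin-freeness distinguishes each secondary vertex by its neighborhood in the adjacent primary levels, and the biregular structure between consecutive levels combined with edge-transitivity forces the action on each subsequent level to be trivial as well.

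With $K \hookrightarrow S_{h+1}$ in hand, $\aut(G)$ is built from the trivial group by one extension by $K \leq S_{h+1}$ followed by extensions realizing $Q \leq D_\infty$ (one infinite-abelian and one cyclic). By the closure properties defining $\Gamma_{h+1}$, we conclude $\aut(G) \in \Gamma_{h+1}$. The hardest part of the plan is the faithfulness claim for $K$: cleanly carrying out the outward propagation from $L_0$ requires combining twin-freeness with the rigidity that edge-transitivity imposes on the biregular bipartite subgraphs of consecutive levels, and ruling out configurations where two vertices of $L_1$ share the same neighborhood in $L_0$ but are only distinguished several levels away.
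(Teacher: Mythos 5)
Your proposal follows the same high-level structure as the paper's proof (apply \cref{lem:pathsTransitive}, dichotomy via \cref{cor:small} and \cref{lem:cylindrical}, decompose $\aut(G)$ along the level-set action), but the central step you isolate---that the kernel $K$ acts \emph{faithfully on $L_0$ alone}, so that $K\hookrightarrow S_{h+1}$---is strictly stronger than what the paper proves, and I do not see how the ``outward propagation'' you sketch can close the gap. If $\phi\in K$ fixes $L_0$ pointwise, then for $v\in L_1$ (or $J_{1/2}$) you can only conclude that $v$ and $v^\phi$ share the same neighborhood \emph{in $L_0$}; twin-freeness does not force $v^\phi=v$, since two vertices of $L_1$ may well have identical $L_0$-neighborhoods and be distinguished only by their neighborhoods in $L_2$. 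An automorphism fixing $L_0$ pointwise could in principle swap such a pair while also permuting $L_2, L_3, \dots$ accordingly, so the induction from $L_0$ outward does not get off the ground without a further structural argument, which you do not supply. Neither biregularity nor edge-transitivity of the bipartite layers obviously rules this out.

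The paper avoids this difficulty by using a weaker (and easy) faithfulness claim: $\Delta$ (your $K$) acts faithfully on $\bigcup_i L_i$, i.e., on \emph{all} primary level sets jointly. Indeed, if $\phi\in\Delta$ fixes every $L_i$ pointwise, then every secondary vertex $v$ has its entire neighborhood $N(v)\subseteq\bigcup_i L_i$ fixed, and twin-freeness then forces $v^\phi=v$. The price is that $\Delta$ is then only exhibited as a subgroup of a direct product $\prod_i \sym(L_i)$ of copies of $S_{h+1}$ (or of dihedral groups in the twisted-grid case), not of a single $S_{h+1}$. That is what the paper works with. You also silently assume $H$ has at least three vertices; the paper separately disposes of the case $|V(H)|\le 2$ (where $|L_i|\le 2\le h+1$ trivially). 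The twisted-grid branch of your argument is fine, but you should prove the faithfulness claim you rely on or retreat to the paper's weaker version and adjust the extension picture accordingly.
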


\begin{proof}
By \cref{lem:pathsTransitive}, there are vertex-disjoint paths
$P_1,\ldots,P_m$ connecting the two ends
such that $H\coloneqq H_{P_1,\ldots,P_m}$
is vertex-transitive.
As a finite vertex-transitive graph, the graph~$H$ is 2-connected or has at most 2 vertices.
In the latter case, it holds that $|L_i|=1< h$ or $|L_i|=2<h$ for all (primary) levels $i\in\ZZ$.
In the former case, we apply
\cref{cor:small,lem:cylindrical} to conclude that
$G$ is a subdivision of the twisted cylindrical grid or
$|L_i|\leq h+1$ for all $i\in\ZZ$.

In either case, the automorphism group $\aut(G)$ has a normal subgroup~$\Delta\trianglelefteq\aut(G)$ that leaves the level sets fixed. The quotient $\aut(G)/\Delta$ is a cyclic or an infinite dihedral group.
If an automorphism in $\Delta$ fixes all points in the sets~$L_i$, it must fix all points since~$G$ is twin-free.
Since the action on the (primary) level sets is faithful, it thus suffices to consider the action of~$\Delta$ on the sets~$L_i$.
If~$|L_i|\leq h+1$ the normal subgroup~$\Delta$ is a subgroup of a direct product of symmetric groups~$S_{h+1}$.
If $G$ is a subdivision of the twisted cylindrical grid,
the normal subgroup $\Delta$ is a subgroup of a direct product of dihedral groups since~$H$ is a cycle.
\end{proof}

\section{Finite edge-transitive graphs}\label{sec:finite:edge:trans}

We now turn to connected finite edge-transitive graphs. Recall that these are regular or bipartite and semi-regular.
We will first investigate the possible degrees that may occur in $K_{h+1}$-minor-free graphs.

\begin{theo}[Kostochka~\cite{DBLP:journals/combinatorica/Kostochka84}]\label{theo:1}
  There is a constant $a\ge 1$ such that for every $h\ge 1$ the average
  degree of a finite $K_{h+1}$-minor-free graph is at most $a\cdot h\cdot\sqrt{\log h}$.
\end{theo}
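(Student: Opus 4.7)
The plan is to prove the contrapositive: any finite graph $G$ of average degree at least $a\cdot h\sqrt{\log h}$, for a sufficiently large absolute constant $a$, contains $K_{h+1}$ as a minor.

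First, I would reduce to the case of large minimum degree. If $d$ denotes the average degree of $G$, then iteratively deleting vertices of degree less than $d/2$ yields an induced subgraph $H$ with minimum degree at least $d/2$, while the average degree cannot go below $d/2$ along the way (so the process terminates on a non-empty graph). It therefore suffices to show that a graph of minimum degree $\delta\geq \frac{a}{2}\,h\sqrt{\log h}$ contains a $K_{h+1}$ minor.

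Second, I would construct the branch sets of the minor by a probabilistic argument. The idea is to pick $h+1$ random seed vertices in $H$ and grow them simultaneously into connected ``blobs'' via a controlled breadth-first process, using the high minimum degree to guarantee rapid expansion. Concretely, one considers BFS-balls of a carefully chosen radius $r=\Theta(\log h)$ around the seeds, which in a graph of minimum degree $\delta$ expand to sets of size roughly $n/(h+1)$. Tidying these balls into a partition yields candidate branch sets, and one then checks that with positive probability (i)~each branch set is connected, (ii)~the branch sets are pairwise vertex-disjoint, and (iii)~between any two of them there is at least one edge of~$H$.

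The main obstacle is the delicate balancing of these three requirements, which is exactly where the $\sqrt{\log h}$ factor enters. A union bound over the $\binom{h+1}{2}$ pairs requires failure probabilities at scale $1/h^2$, which forces each branch set to emit sufficiently many outgoing edges while remaining small enough that disjointness across all pairs survives. Tracking the volume of BFS-balls of radius $O(\log h)$ in a graph of minimum degree $\delta$, one finds that $\delta=\Theta(h\sqrt{\log h})$ is precisely the threshold at which expansion, disjointness, and pairwise adjacency can all be enforced simultaneously. Carrying out this analysis, as done by Kostochka, yields an absolute constant~$a$ for which the bound holds uniformly in~$h$.
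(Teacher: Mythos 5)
The paper does not prove this theorem; it is imported as an external result of Kostochka~\cite{DBLP:journals/combinatorica/Kostochka84} (proved independently by Thomason), so there is no internal proof to compare against. Your sketch is in the spirit of Thomason's probabilistic approach rather than Kostochka's combinatorial one, but as written it has a genuine gap. The peeling step to minimum degree $\delta\geq d/2$ is fine, but it gives no control on the order~$n$ of the resulting graph, and the central claim---that BFS-balls of radius $\Theta(\log h)$ around random seeds ``expand to sets of size roughly $n/(h+1)$''---is unjustified and generally false. For a concrete obstruction, let $H$ be the clique blow-up of a long cycle $C_m$ with cliques of size $k=\Theta(h\sqrt{\log h})$, adjacent cliques joined completely: then $\delta(H)=3k-1=\Theta(h\sqrt{\log h})$ while $n=mk$ is arbitrary, every ball of radius $r$ has only about $(2r+1)k$ vertices, and $h+1$ random seeds will almost surely be pairwise far apart, so the candidate branch sets would be pairwise non-adjacent. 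The $K_{h+1}$-minor in $H$ is a local object (two adjacent cliques), and a random-seed/BFS scheme is aimed at the wrong target.

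The missing ingredient is the \emph{compression} (``small dense minor'') lemma that lies at the heart of both Kostochka's and Thomason's proofs: from average degree $\Omega(h\sqrt{\log h})$ one first extracts a minor $H'$ with $\delta(H')=\Omega(h\sqrt{\log h})$ \emph{and} $|H'|=O(h\sqrt{\log h})$, typically by passing to a minor-minimal graph of the given edge density and observing that in it every edge must have $\Omega(h\sqrt{\log h})$ common neighbours. Only once $n$ and $\delta$ are of comparable size does a random-partition argument make sense; there, random near-equipartitions of $V(H')$ into $h+1$ parts can be shown connected and pairwise adjacent with positive probability, and the union bound over the $\binom{h+1}{2}$ pairs is indeed what forces the $\sqrt{\log h}$ factor, as you correctly intuit. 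Without the compression step, however, the argument you describe does not get started, and the delicate balancing you allude to cannot be carried out.
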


In the following, we use $\alpha_h\coloneqq\lceil a\cdot h\cdot\sqrt{\log h}\rceil$
where $a$ is the constant of the theorem.
We say that a bipartite graph $G$ with bipartition $V_1,V_2$ is \emph{left-twin-free}
if there are no distinct vertices in $V_1$ that are twins.

\begin{lem}\label{lem:degree:bound}
  Let $G$ be a $(c_1,c_2)$-biregular, left-twin-free,
  $K_{h+1}$-minor-free, bipartite finite graph with bipartition $V_1,V_2$ such that $c_1\le c_2$. Then, it holds that
  $c_2\leq\alpha_h\cdot\left(\binom{\alpha_h}{\lceil\alpha_h/2\rceil}+1\right)$.
\end{lem}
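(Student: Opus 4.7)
My plan is to proceed in three main stages: bound $c_1$ via Kostochka, view $c_2$ locally through left-twin-freeness plus Sperner, and then carefully control the size of the local ``universe'' using the $K_{h+1}$-minor-free assumption.

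\textbf{Bounding $c_1$.} Kostochka's theorem (Theorem~\ref{theo:1}) gives that the average degree of $G$ is at most $\alpha_h$. In a $(c_1,c_2)$-biregular bipartite graph, the average degree equals the harmonic mean $\frac{2c_1 c_2}{c_1+c_2}$, which is at least $c_1$ whenever $c_1\le c_2$. Hence $c_1 \le \alpha_h$.

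\textbf{Localizing $c_2$ and applying Sperner.} Fix a vertex $w\in V_2$ of degree $c_2$. By left-twin-freeness, the neighborhoods $\{N(v)\mid v\in N(w)\}$ are $c_2$ pairwise distinct $c_1$-subsets of $V_2$, each of which contains $w$. Setting $U \coloneqq \bigcup_{v\in N(w)} N(v) \subseteq V_2$, these neighborhoods are $c_2$ distinct $c_1$-subsets of $U$ through the common point $w$; equivalently, the sets $N(v)\setminus\{w\}$ form $c_2$ distinct $(c_1-1)$-subsets of $U\setminus\{w\}$. This yields the Sperner-style estimate
\[
c_2 \le \binom{|U|-1}{c_1-1}.
\]
In particular, whenever $|U|\le \alpha_h+1$, the right-hand side is at most $\binom{\alpha_h}{\lceil\alpha_h/2\rceil} = M$, which is well within the claimed bound $\alpha_h(M+1)$.

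\textbf{Controlling $|U|$ in the large-universe case.} When $|U| > \alpha_h+1$, the Sperner estimate alone is too weak, and the $K_{h+1}$-minor-free hypothesis must be exploited. A key consequence is that (for $h\ge 2$) $G$ contains no $K_{h,h}$ as a subgraph, since $K_{h,h}$ itself has a $K_{h+1}$-minor. Translated to our family of $c_1$-subsets through $w$, this forbids any $(h-1)$-element subset of $U\setminus\{w\}$ from being contained in $h$ or more of the $N(v)$'s. The strategy is then to combine this bounded-co-degree condition with Kostochka applied to the bipartite subgraph $G[N(w),\{w\}\cup U]$ (which has $c_1c_2$ edges on $c_2+1+|U|$ vertices, yielding $(2c_1-\alpha_h)c_2 \le \alpha_h(|U|+1)$) to partition or cover $N(w)$ into at most $\alpha_h$ classes, each of whose members have neighborhoods contained in a sub-universe of size $\le \alpha_h+1$. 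Each class then contributes at most $M$ vertices by Sperner, giving $c_2 \le \alpha_h M$, with the additional $\alpha_h$ in $\alpha_h(M+1)$ absorbing boundary terms from the covering.

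\textbf{Main obstacle.} The easy part is Steps 1 and 2 and the small-universe case. The genuine difficulty is the large-$|U|$ analysis: extracting exactly the factor $\alpha_h$ (rather than a worse polynomial in $\alpha_h$ or $c_1$) from the $K_{h+1}$-minor-free assumption in a way that respects the biregular structure. This amounts to showing that $N(w)$ admits a covering by $O(\alpha_h)$ ``Sperner-pieces,'' and I expect this to be the technical heart of the argument.
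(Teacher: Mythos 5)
Your Steps 1 and 2 and the small-universe branch are correct, but the proposal has a genuine gap at precisely the point you flag: the large-$|U|$ case. The claim that $N(w)$ can be covered by at most $\alpha_h$ pieces, each of whose members have neighborhoods inside a sub-universe of size $\le\alpha_h+1$, is not supported by an argument. The $K_{h,h}$-subgraph-free consequence you cite controls co-degrees (no $(h-1)$-subset of $U\setminus\{w\}$ lies in $h$ of the sets $N(v)\setminus\{w\}$), but it is unclear how to pass from this to a covering into $O(\alpha_h)$ small-universe pieces; bounded co-degree alone does not force the hypergraph of neighborhoods to be ``concentrated'' in the required way. Likewise the auxiliary Kostochka application to $G[N(w),U]$ yields $(2c_1-\alpha_h)c_2\le\alpha_h|U|$, which is vacuous unless $c_1>\alpha_h/2$ and, even then, only bounds $c_2$ by a quantity ($|U|$) that you have not bounded. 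So the proof, as written, does not close.

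It is also worth noting that the paper's proof takes a genuinely different, global route rather than localizing at a single $w$. Assuming $c_2>\alpha_h\bigl(\binom{\alpha_h}{\lceil\alpha_h/2\rceil}+1\bigr)$, it first uses Hall's theorem (feasible because $|V_1|=\tfrac{c_2}{c_1}|V_2|$ is then much larger than $|V_2|$) to extract a set $E'\subseteq E(G)$ in which every $V_1$-vertex has at most one incident edge while every $V_2$-vertex has more than $\binom{\alpha_h}{\lceil\alpha_h/2\rceil}$ incident edges. Contracting $E'$ yields a minor $M$ on vertex set $V_2$; for each $v_2\in V_2$, left-twin-freeness gives more than $\binom{\alpha_h}{\lceil\alpha_h/2\rceil}\ge\binom{\alpha_h}{c_1-1}$ pairwise distinct $(c_1-1)$-sets $N_G(v_1)\setminus\{v_2\}$ contributing to $N_M(v_2)$, so $\deg_M(v_2)>\alpha_h$ by pigeonhole. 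Thus $M$ has average degree exceeding $\alpha_h$, contradicting Kostochka. The global matching-and-contract step is exactly what replaces the missing ``covering $N(w)$'' step in your plan, and it exploits the biregularity in an essential way (through the ratio $|V_1|/|V_2|=c_2/c_1$) that a one-vertex local argument does not obviously have access to. If you want to salvage a local approach, you would need an actual mechanism for producing the $O(\alpha_h)$-piece covering; as it stands, this is the technical heart you correctly identified but did not supply.
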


\begin{proof}
The assertion is trivial for $h= 1$
and for $c_1\le 1$.
Also, note that $c_1\leq \alpha\coloneqq\alpha_h$ by \cref{theo:1}.
Let us
assume that $h\ge 2$ and $c_1\ge 2$. 
Suppose for the sake of contradiction that
    \begin{equation}
      \label{eq:3}
      c_2> \alpha\cdot\left(\binom{\alpha}{\lceil\alpha/2\rceil}+1\right).
    \end{equation}
    
We argue that there is a subset of edges~$E'\subseteq E(G)$ such that
each vertex in~$V_1$ is incident with at most one edge of~$E'$ and
each vertex of~$V_2$ is incident with at
least~$\binom{\alpha}{\lceil\alpha/2\rceil}+1$ edges
of~$E'$.
Indeed, create~$\binom{\alpha}{\lceil\alpha/2\rceil}$ copies of
each vertex of~$V_2$ giving us a new set~$V_2'$ in which each vertex
is in a twin class of size~$\binom{\alpha}{\lceil\alpha/2\rceil}+1$.
Since~$|V_2'|= \left(\binom{\alpha}{\lceil\alpha/2\rceil}+1\right)\cdot|V_2|<\frac{c_2}{c_1}\cdot |V_2|= |V_1|$,
by Hall's marriage
theorem there is a matching that matches each vertex in~$V_2'$ with
a vertex in~$V_1$. Identifying the twins again yields the desired
set of edges~$E'$.

Let $M$ be the minor of $G$ with vertex set $V(M)=V_2$ obtained from
$G$ by contracting the edges in~$E'$.
We show that $M$ has
average degree greater than $\alpha$.

Observe that for every $v_2\in V_2$ it holds that
  \[
    N_M(v_2)\supseteq\bigcup_{v_1v_2\in E'}(N_G(v_1)\setminus\{v_2\}).
  \]
In the following, we argue that $N_M(v_2)$ is large.
First, note that $|\{v_1v_2\in E'\}|>\binom{\alpha}{\lceil\alpha/2\rceil}\geq \binom{\alpha}{c_1-1}$.
Since $G$ is left-twin-free and all vertices in $V_1$ have degree $c_1$,
the sets $N_G(v_1)\setminus\{v_2\}$ are mutually distinct sets of size
$c_1-1$.
Therefore, we have that $\deg_M(v_2)=|N_M(v_2)|>\alpha$.
Thus, the average degree of $M$ is greater than $\alpha$.
By Theorem~\ref{theo:1}, the graph $M$ has a $K_{h+1}$-minor, contradicting that $G$ is
$K_{h+1}$-minor-free.
\end{proof}

Let us briefly record that it is not possible to have a subexponential bound in the previous lemma. Indeed, 
for each triple~$(t,h,r)$ of positive integers with~$r\leq h$ there is an edge-transitive graph of order~$t\binom{h}{r}^2+th$ that is
connected,
twin-free,
$\left(2r,2\binom{h-1}{r-1}\binom{h}{r}\right)$-biregular, and
$K_{\CO(h)}$-minor-free. 
For this, let~$V$ be the set~$\mathbb{Z}_t \times \{1,\ldots,h\}$.
Let~$U$ be the set~$\mathbb{Z}_t \times \binom{\{1,\ldots,h\}}{r} \times \binom{\{1,\ldots,h\}}{r}$.
Connect~$u= (i,A_1,A_2)\in U$ with~$v =(i,j)\in V$ if~$j\in A_1$ and also connect~$u= (i,A_1,A_2)$ with~$v =(i+1,j)$ if~$j\in A_2$.
For all expressions, the first indices are taken modulo~$t$.

Our goal in the rest of this section is to characterize the composition factors of edge-transitive graphs as follows.

\begin{theo}\label{thm:edge:transitive:twin:free:comp:fac:bound}
There is a function~$f$ such that every automorphism group of a connected $K_{h+1}$-minor-free, edge-transitive, twin-free and finite graph is contained in~$\Gamma_{f(h)}$.
\end{theo}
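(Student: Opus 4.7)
The plan is a compactness argument that reduces the theorem to the structural analysis of infinite edge-transitive limit graphs. First, we reduce to bounded maximum degree. Since $G$ is edge-transitive without isolated vertices, $G$ is either vertex-transitive (hence regular) or bipartite with $\aut(G)$ transitive on each side (hence biregular). In the regular case \cref{theo:1} bounds the degree by $\alpha_h$. In the biregular case, twin-freeness implies left-twin-freeness with respect to the side of smaller degree, so \cref{lem:degree:bound} bounds the larger degree by a function $D(h)$ of $h$ alone.

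Suppose for contradiction the claim fails. Then for some fixed $h$ there is a sequence $(G_k)_{k\in\NN}$ of connected, $K_{h+1}$-minor-free, edge-transitive, twin-free, finite graphs such that $\aut(G_k)$ admits a non-abelian composition factor not embeddable into $S_k$. After BFS-relabeling and passing to a convergent subsequence (permitted by the degree bound), \cref{lem:limitGraphs} yields an infinite, connected, locally finite, edge-transitive pointwise limit $\limgraphG$ that remains $K_{h+1}$-minor-free and, by \cref{lem:limitGraphs}(\ref{convergence:lemma:part:balls}), twin-free. Being almost vertex-transitive, $\limgraphG$ has one, two, or infinitely many ends, and we treat these cases separately.

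The two-ended case is handled by \cref{lem:comp:factors:of:two:end:vertex:trans}. The one- and infinitely-ended cases will receive analogous structural descriptions in subsequent sections, following Babai's sphere-packing and ring-like decomposition ideas; each produces an $\aut(\limgraphG)$-invariant partition of $V(\limgraphG)$ whose blocks have size at most some $f(h)$ and whose quotient action is cyclic, dihedral or contained in a $\Gamma_{f(h)}$-group, so that $\aut(\limgraphG)\in\Gamma_{f(h)}$. Crucially, the defining combinatorial features of the blocks — level membership, incidence with minimum separators, position within an Archimedean tile — are witnessed on balls of bounded radius around edges. Applying \cref{lem:limitGraphs}(\ref{convergence:lemma:part:balls}) with that radius shows that for all sufficiently large $k$ the graph $G_k$ carries the same local structure, and hence an $\aut(G_k)$-invariant partition of $V(G_k)$ with blocks of size at most $f(h)$ and an appropriately restricted quotient action, which forces $\aut(G_k)\in\Gamma_{f(h)}$. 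This contradicts the choice of $G_k$ for $k\geq f(h)$.

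The main obstacle lies in this final transfer: a partition that is $\aut(\limgraphG)$-invariant need not a priori be $\aut(G_k)$-invariant, since $\aut(G_k)$ may contain automorphisms that do not arise as limits of automorphisms of the $G_k$. To bridge this gap the partition must be characterized by purely local combinatorial data, so that every automorphism of each finite graph is forced to respect it. This local characterization is already the chief technical effort in the two-ended analysis of \cref{sec:twoEnds} (via leftmost minimum separators and level sets), and an analogous effort — carried out in the one-ended case via tilings and in the infinitely-ended case via tree-like block decompositions — will be needed to complete the proof.
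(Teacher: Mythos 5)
Your overall framework is the paper's: bound the degree using \cref{theo:1} and \cref{lem:degree:bound}, pass to a convergent BFS-labeled subsequence, analyze the limit graph~$\limgraphG$ by its number of ends, and use \cref{lem:limitGraphs} to pull information back. You also correctly isolate the central difficulty, namely that an~$\aut(\limgraphG)$-invariant structure need not be~$\aut(G_k)$-invariant. But the way you propose to handle it, and the way you describe the one- and infinitely-ended cases, diverge from the paper in ways that leave the argument unfinished.

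The paper does not resolve the transfer issue via a locally-defined partition of bounded block size. In the two-ended case it instead proves a much stronger transfer result (\cref{lem:two:ends:covering}): for~$j$ sufficiently large there is a graph covering map~$\cov\colon V(\limgraphG)\to V(G_j)$ to which \emph{every} automorphism of~$G_j$ lifts, so~$\aut(G_j)$ is a quotient of a subgroup of~$\aut(\limgraphG)$, and \cref{lem:comp:factors:of:two:end:vertex:trans} applies directly. This covering map is built by assembling two reflections into a rotation on the finite graph, passing to a power so that the finite rotation~$\psi$ and its lift~$\psi^{\uparrow}$ commute with the ball isomorphism~$\varphi$ pointwise on a fundamental-domain separator, and then showing that both ``rotational'' and ``reflective'' automorphisms of~$G_j$ lift. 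None of this reduces to ``the partition is locally characterized, hence invariant''; the level-set partition of~$\limgraphG$ does \emph{not} simply descend to~$G_j$, because~$G_j$ has finite cyclic structure and its minimum separators between the two borders are only well-defined deep inside a ball. Your proposed ``local characterization'' step is precisely the content that needs an actual argument, and the argument that works is the covering-lift, not a local-recognizability claim.

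Your account of the other two cases is also off. In the infinitely-ended case the paper does not produce any partition or tree-like decomposition: \cref{thm:inf:ends:large:Hadwiger} shows directly that~$\Had(G_j)\to\infty$, an immediate contradiction, so there is nothing to transfer. In the one-ended case the conclusion is not a bounded-block partition either; the paper shows~$\limgraphG$ is an Archimedean tiling of the Euclidean plane (after ruling out the hyperbolic case via sphere packing, \cref{lem:spherePacking}), and then \cref{lem:char0} transfers the local face-size identity~$\sum_i(1/2-1/f_i)=1$ to~$G_j$ to conclude that~$G_j$ embeds on a surface of Euler characteristic~$0$, hence~$\aut(G_j)$ is solvable and in~$\Gamma_1$. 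So in the one-ended case the transferred object is a local embedding datum leading to a topological conclusion, and in the infinitely-ended case no transfer is needed at all. As written, your proposal leaves the key two-ended transfer as a promissory note and sketches the other two cases with the wrong shape of argument; the missing idea is the covering-map lift of automorphisms.
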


Towards the theorem,
assume for the sake of contradiction that there is an $h$ and an infinite sequence~$H_1,H_2,H_3,\ldots$ of connected $K_{h+1}$-minor-free, edge-transitive, twin-free finite graphs for which there is no~$d$ such that~$\aut(H_j)\in \Gammad$ for all~$j\geq 1$.
We can assume that if~$\aut(H_{j+1})\in \Gammad$, then~$\aut(H_{j})\in \Gammad$ for all $d\geq 0,j\geq 1$.
By \cref{theo:1,lem:degree:bound}, there is a constant bounding
the degree of each graph in the sequence.
As argued in the preliminaries,
this sequence has a convergent subsequence $G_1,G_2,G_3,\ldots$ and a corresponding connected edge-transitive infinite limit graph~$\limgraphG$.
Since the balls of $\limgraphG$ correspond to balls of graphs $G_j$ (see \cref{lem:limitGraphs}),
the limit graph $\limgraphG$ is also twin-free, $K_{h+1}$-minor-free and locally finite.

We perform a case distinction depending on the number of ends of~$\limgraphG$ and each possibility will give us a contradiction. 
As mentioned in the preliminaries,
the number of ends of an infinite, connected, almost vertex-transitive and locally finite graph is one, two or infinite.

\subsection{One end}\label{subsec:one:end} Suppose~$\limgraphG$ has one end. In this case,
we can apply various techniques that have been previously developed for vertex-transitive graphs.
However, we need to ensure that they apply to the edge-transitive case.
We first collect some information on the connectivity of~$\limgraphG$.

 \begin{lem}[Mader~\cite{MR289343}]\label{lem:mad}
   A finite, connected and edge-transitive graph of minimum degree $d$ has
     connectivity at least $d$.
 \end{lem}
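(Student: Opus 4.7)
The plan is to prove this lemma by the classical submodular uncrossing of minimum vertex cuts, combined with the transitivity of $\aut(G)$ on edges. This is a well-known theorem of Mader from 1970 and the cleanest course of action for the paper is probably to simply cite \cite{MR289343}; nonetheless an outline follows.

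Suppose for contradiction that $G$ is finite, connected and edge-transitive with minimum degree $d$, but that some minimum vertex cut $S$ satisfies $k\coloneqq|S|<d$. Let $A$ be a component of $G-S$ of smallest cardinality and $B\coloneqq V(G)\setminus(A\cup S)$. The first observation is that every $v\in A$ has at most $k$ of its $\geq d$ neighbors inside $S$, so it has at least $d-k\geq 1$ neighbors inside $A$; in particular $G[A]$ contains an edge, and the same argument applied to any component of $G-S$ sitting inside $B$ shows that $G[B]$ contains an edge as well.

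Now I use edge-transitivity: pick $\phi\in\aut(G)$ mapping an edge of $G[A]$ to an edge of $G[B]$, and set $S'\coloneqq S^\phi$, $A'\coloneqq A^\phi$, $B'\coloneqq B^\phi$. Then $S'$ is another minimum vertex cut, $A'$ is a component of $G-S'$ of the same minimum size, and $A'\cap B\neq\emptyset$ because both endpoints of the image edge lie in $A'\cap B$. The standard uncrossing argument applied to the partitions $V=S\sqcup A\sqcup B$ and $V=S'\sqcup A'\sqcup B'$ shows that
\[
T_1\coloneqq (S\cap S')\cup(S\cap A')\cup(A\cap S'),\qquad T_2\coloneqq (S\cap S')\cup(S\cap B')\cup(B\cap S')
\]
satisfy $|T_1|+|T_2|=|S|+|S'|=2k$, and -- using that $G-S$ has no edges between $A$ and $B$ and $G-S'$ has no edges between $A'$ and $B'$ -- that $T_1$ separates $A\cap A'$ from $B\cap B'$ while $T_2$ separates $A\cap B'$ from $B\cap A'$. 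Symmetric ``mixed corner'' separators $T_3,T_4$ of total size $2k$ can be defined analogously. Since $A'\cap B\neq\emptyset$, one of the mixed-corner separators is nontrivial, and together with $\kappa(G)=k$ the sum-inequality forces equality; this in turn forces one of the intersections $A\cap S'$, $S\cap A'$, $B\cap S'$, $S\cap B'$ to be empty. Combining this emptiness with the fact that every vertex of a minimum cut has a neighbor in every component of its complement produces the contradiction.

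The main obstacle is the bookkeeping in the equality case of the uncrossing, in particular making sure that $T_1,T_2,T_3,T_4$ really are separators in the presence of possibly more than two components of $G-S$, and then tracing through the several cases carved out by which corner is empty. Mader's original proof bypasses much of this by organizing the argument through the theory of $k$-atoms -- showing that the minimum components of $G-S$ taken over all minimum cuts $S$ form a block system of $\aut(G)$ on $V(G)$ -- and this is the route I would follow if a self-contained proof were required in the paper.
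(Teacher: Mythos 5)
The paper gives no proof of this lemma at all: it is stated as a known result and attributed to Mader via \cite{MR289343}, exactly as you suggest is ``probably the cleanest course of action.'' So your conclusion agrees with the paper; there is no in-paper proof to compare against.

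Since you do offer a sketch, a word on its middle step. The initial reduction (obtaining an edge inside $A$ and an edge inside $B$, then using edge-transitivity to get a conjugate cut system $S',A',B'$ with $A'\cap B\neq\emptyset$) is correct, and the submodular identity $|T_1|+|T_2|=|S|+|S'|=2k$ is right. But the sentence ``the sum-inequality forces equality; this in turn forces one of the intersections $A\cap S'$, $S\cap A'$, $B\cap S'$, $S\cap B'$ to be empty'' does not follow as written: equality $|T_1|=|T_2|=k$ is perfectly compatible with all four of those intersections being nonempty. What actually drives the contradiction in Mader's argument is the \emph{minimality} of the fragment (the atom property $|A|\leq|C|$ for every fragment $C$ of every minimum cut), which, applied to the corner $A\cap A'$ or $B\cap A'$ together with $|A|=|A'|$, forces the relevant corner to coincide with an atom and ultimately shows atoms cannot cross. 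You correctly flag this by deferring to Mader's $k$-atom machinery in your last paragraph, which is indeed the right route if a self-contained proof were wanted; as it stands the uncrossing sketch has a gap at exactly the point you identify as ``the main obstacle.''
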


A graph~$G$ is \emph{almost 4-connected} if it is 3-connected and for every 3-separator~$S$ the graph~$G-S$ has exactly two connected components, one of which consists only of one vertex.

\begin{lem}\label{lem:1:end:connectivity}
In case the limit graph $\limgraphG$ has only one end, it is almost-4-connected or~$\limgraphG$ has two orbits and the vertices in one of the orbits have degree~$2$. In the latter case,~$\limgraphG$ is a subdivision of a vertex-transitive and edge-transitive graph that has the same automorphism group as~$\limgraphG$.
\end{lem}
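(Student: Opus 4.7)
The proof is a case analysis on the orbit structure of $\limgraphG$, which by edge-transitivity has one or two vertex orbits, with constant degree within each orbit. A few easy exclusions apply: a vertex-transitive $\limgraphG$ of degree at most $2$ is either a finite cycle or $\mathbb{Z}$, neither of which is an infinite locally finite one-ended graph; a bipartite two-orbit $\limgraphG$ with an orbit of degree $1$ is a collection of stars, which cannot be both infinite and locally finite when connected. So outside the exceptional case in the statement, the minimum degree of $\limgraphG$ is at least $3$.

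For the exceptional case, suppose $\limgraphG$ has orbits $O_1,O_2$ with $O_2$ of degree $2$. Each $v\in O_2$ has two distinct neighbours in $O_1$ (distinct because $\limgraphG$ is simple). Define $G'$ on vertex set $O_1$ by joining, for each $v\in O_2$, the two $O_1$-neighbours of $v$. Twin-freeness of $\limgraphG$ ensures that no two distinct vertices of $O_2$ share a neighbourhood, so $G'$ is a simple graph whose $1$-subdivision equals $\limgraphG$. Transitivity on $O_1$ makes $G'$ vertex-transitive, and, since edges of $G'$ biject with vertices of $O_2$, transitivity on $O_2$ makes $G'$ edge-transitive. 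The restriction map $\aut(\limgraphG)\to \aut(G')$ is injective (by twin-freeness, the action on $O_1$ determines the action on $O_2$) and surjective (any $\phi\in\aut(G')$ extends to $\limgraphG$ by sending $v\in O_2$ with $N(v)=\{u_1,u_2\}$ to the unique vertex with neighbourhood $\{u_1^\phi,u_2^\phi\}$), so $\aut(G')=\aut(\limgraphG)$.

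In all remaining cases, the plan is to establish almost 4-connectivity by transferring connectivity from the approximating finite graphs. Each $G_j$ is a finite connected edge-transitive graph whose minimum degree equals that of $\limgraphG$ (by the ball-isomorphism of \cref{lem:limitGraphs}) and is therefore at least $3$; Mader's theorem (\cref{lem:mad}) then gives each $G_j$ connectivity equal to its minimum degree. Suppose $S$ is a separator of $\limgraphG$ with $|S|\le 3$; by the one-end assumption, $\limgraphG-S$ has a unique infinite component, so the finite side $B$ of $S$ is a bounded finite set. Enclosing $S\cup B$ together with all their neighbourhoods in a ball $B_{t,\limgraphG}(w)$ and using \cref{lem:limitGraphs} to find an isomorphic copy of this ball in $G_j$ for large $j$, I conclude that the image of $S$ separates the image of $B$ from the rest of $G_j$, so $G_j$ has a separator of size $|S|$ cutting off at least $|B|$ vertices. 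When the minimum degree is at least $4$, this already contradicts Mader, giving $4$-connectivity. When the minimum degree is $3$, the argument rules out $1$- and $2$-separators, and a further analysis of $3$-separators combining edge-transitivity with twin-freeness forces the cut-off fragment to be a single vertex.

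The main obstacle is the connectivity transfer, since separators are global objects while $G_j$ may contain paths outside any fixed ball. The one-end hypothesis is precisely what makes the transfer work, because it forces the non-infinite side of any small separator to be bounded and hence pinned inside a ball that is faithfully copied into every sufficiently large $G_j$. A secondary subtlety lies in the minimum-degree-$3$ regime, where the existence of $3$-separators is not ruled out by Mader and one has to use edge-transitivity together with twin-freeness to confirm that such a separator can only cut off a single vertex, thereby giving the ``almost'' part of almost 4-connectivity.
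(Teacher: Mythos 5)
Your proposal follows essentially the same strategy as the paper: use the ball-isomorphisms of \cref{lem:limitGraphs} to transfer a small separator of~$\limgraphG$ (together with its bounded finite side) into~$G_j$, use Mader's theorem (\cref{lem:mad}) on the finite edge-transitive~$G_j$ to bound its connectivity from below, and deduce high connectivity of~$\limgraphG$; and in the degree-$2$ regime, dissolve the degree-$2$ orbit to obtain a vertex- and edge-transitive graph with the same automorphism group. Your treatment of the degree-$2$ case is in fact slightly more careful than the paper's, spelling out why twin-freeness guarantees the dissolved graph~$G'$ is simple and why the restriction map~$\aut(\limgraphG)\to\aut(G')$ is an isomorphism, where the paper only asserts the conclusion.

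There is, however, a genuine gap in the minimum-degree-$3$ regime. You write that ``a further analysis of $3$-separators combining edge-transitivity with twin-freeness forces the cut-off fragment to be a single vertex,'' but you do not carry out this analysis or cite a source. This is precisely the point where the paper invokes a known nontrivial fact, namely that finite $3$-connected edge-transitive graphs are almost-$4$-connected (the paper cites a reference for this). Note also that the cited result does not appear to rely on twin-freeness, so your suggestion that the missing argument ``combines edge-transitivity with twin-freeness'' is likely mischaracterizing what is needed. Without either a proof of, or a citation for, the fact that a $3$-separator in a finite $3$-connected edge-transitive graph can cut off only a single vertex, the almost-$4$-connectivity conclusion does not follow, and this is the hardest step of the lemma. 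Everything else in your proposal is sound and matches the paper's route.
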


\begin{proof}
Let~$S$ be a minimum separator.
Since $\limgraphG$ is locally finite, the separator $S$ is finite.
Note that exactly one of the connected components of~$\limgraphG-S$ is infinite (since $\limgraphG$ has one end),
and thus at least one of the connected components of~$\limgraphG-S$ is finite.
Therefore, for sufficiently large~$j$ each minimum separator in the graph~$G_j$
has size at most $|S|$.
This implies that~$G_j$ has minimum degree at most~$|S|$ by Mader's Theorem (Theorem~\ref{lem:mad}) and thus~$\limgraphG$ has minimum degree~$|S|$. This also implies that~$G_j$ has minimum degree exactly~$|S|$ and thus the connectivity of $G_j$ and $\limgraphG$ coincides.

If $|S|=1$, then the minimum degree of $\limgraphG$ is 1, contradicting that $\limgraphG$ is edge-transitive, connected,
infinite and twin-free.
If $|S|=2$, then the minimum degree of $\limgraphG$ is 2, and therefore
$\limgraphG$ must have two orbits and the vertices in one of the orbits, say~$O$, have degree 2.
In this case, we can replace every path of length 2 that has an internal vertex from~$O$ by an edge and
obtain a graph~$\limgraphG'$ that is vertex-transitive and has the same automorphism group as~$\limgraphG$. 

Suppose now that~$|S|=3$.
If a finite connected component of~$\limgraphG-S$ were to contain more than one vertex,
then for sufficiently large~$j$
there is a separator in $G_j$ separating more than one vertex.
However, finite 3-connected edge-transitive, twin-free graphs are known to be almost-4-connected (see for example here~\cite[Theorem 1]{DBLP:journals/dam/ZhangM08}).
Therefore, the finite connected component of~$\limgraphG-S$ consists of only one vertex,
and thus the limit graph $\limgraphG$ is almost-4-connected.
\end{proof}

Recall that a graph is almost vertex-transitive
if it has only finitely many vertex orbits under its automorphism group. Note that edge-transitive graphs are almost vertex-transitive.
An end of a graph is \emph{thick} if it contains an infinite collection of
pairwise disjoint one-way infinite paths.  We will only use the concept in the following two theorems.
\begin{theo}[{\cite[Theorem~5.6]{DBLP:journals/combinatorica/Thomassen92}}] 
If~$G$ is a connected, infinite, locally finite and almost vertex-transitive graph
with only one end, then that end is thick.
\end{theo}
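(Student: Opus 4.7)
The plan is to proceed by contradiction. Assume the unique end $\omega$ of $G$ is thin, so that there is a maximum number $k\in\NN$ of pairwise vertex-disjoint rays representing~$\omega$. The strategy is to combine a Halin-type decomposition near~$\omega$ with almost vertex-transitivity to produce an automorphism whose existence clashes with the monotonically growing sizes of ``backward'' finite regions along~$\omega$.

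First I would invoke a classical Halin-type result for thin ends of locally finite graphs: there exists an infinite sequence of pairwise disjoint finite separators $S_1, S_2, S_3, \ldots \subseteq V(G)$, each of size exactly~$k$, such that for each~$i$ the graph $G - S_i$ has a unique infinite component $C_i \ni \omega$, with $C_1 \supsetneq C_2 \supsetneq \cdots$ and $S_j \subseteq C_i$ whenever $i<j$. Because $G$ has only one end, the complementary region $B_i \coloneqq V(G) \setminus (S_i \cup C_i)$ is finite for every~$i$. Moreover, for $i<j$ one has $B_i \subsetneq B_j$ strictly: the inclusion $B_i \subseteq B_j$ holds because $C_j \subseteq C_i$ and $S_j \cap B_i = \emptyset$ (as $S_j \subseteq C_i$), while $B_j$ additionally contains the nonempty set $S_i$ together with the nonempty slab of vertices lying strictly between~$S_i$ and~$S_j$. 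Consequently $|B_1| < |B_2| < |B_3| < \cdots$.

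Next I would apply almost vertex-transitivity. Since $\Aut(G)$ has only finitely many orbits on $V(G)$, it induces only finitely many orbits on unordered $k$-subsets of~$V(G)$. By the pigeonhole principle, the infinite family $\{S_i\}_{i\ge 1}$ contains two members $S_i$ and $S_j$ (with $i<j$) lying in the same orbit, so there exists an automorphism $\sigma \in \Aut(G)$ with $\sigma(S_i)=S_j$.

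The main obstacle, and the heart of the argument, is now resolved by a direct cardinality count. Since~$\sigma$ is a bijection of $V(G)$ mapping $S_i$ to $S_j$, it sends components of $G - S_i$ bijectively to components of $G - S_j$; in particular, the unique infinite component $C_i$ must map to the unique infinite component $C_j$, i.e.,~$\sigma(C_i)=C_j$. Consequently $\sigma$ restricts to a bijection between the finite sets $B_i = V(G)\setminus(S_i\cup C_i)$ and $B_j = V(G)\setminus(S_j\cup C_j)$, forcing $|B_i|=|B_j|$. This contradicts the strict inequality $|B_i| < |B_j|$ established above, so the end~$\omega$ must in fact be thick.
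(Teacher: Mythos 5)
This theorem is stated in the paper as a citation to Thomassen (\cite[Theorem~5.6]{DBLP:journals/combinatorica/Thomassen92}); the paper itself gives no proof, so there is no in-paper argument to compare against. Evaluating your proof on its own merits, the overall contradiction strategy is a reasonable one, but there is a genuine gap in the pigeonhole step that I do not see how to close without further work.

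The problematic claim is: ``Since $\Aut(G)$ has only finitely many orbits on $V(G)$, it induces only finitely many orbits on unordered $k$-subsets of $V(G)$.'' This is false in general. Already the two-way infinite path $\mathbb Z$ (which is vertex-transitive, so certainly almost vertex-transitive) has a single vertex orbit but infinitely many orbits on $2$-subsets, since $\{i,j\}$ and $\{i',j'\}$ are in the same orbit if and only if $|i-j|=|i'-j'|$. Finitely many vertex orbits only constrains the orbit of each \emph{individual} element of $S_n$, not the relative configuration of the $k$ elements, which is governed by pairwise distances that may be unbounded as $n\to\infty$. For your pigeonhole argument to work you would need to establish that the size-$k$ separators $S_n$ can be chosen with uniformly bounded diameter in $G$. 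That is not a formal consequence of almost vertex-transitivity; in the two-ended setting the paper proves such a bound (Lemma~\ref{lem:boundedDiameter}) only after establishing the strip structure via a translation automorphism, and in the one-ended thin-end setting the existence of such structure is more or less what the theorem is asserting, so assuming it would be circular.

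If you want to salvage this approach, the natural route is to work with automorphic images of a \emph{single} fixed separator $S_0$ (whose diameter is then automatically preserved), rather than with the Halin sequence $S_1,S_2,\ldots$, and then argue the contradiction from the fact that an automorphism $\sigma$ pushing $S_0$ deep into the end must carry the finite side $B_0$ to a set of the same cardinality. But note that this introduces its own subtlety: since $\sigma(B_0)$ is the image of a finite set, it is the finite side of $\sigma(S_0)$, while $B_0\cup S_0$ (being far from $\sigma(S_0)$) lands in the \emph{infinite} side $\sigma(C_0)$; no cardinality contradiction falls out immediately. One then has to use more structure — for instance, arranging $\sigma$ to be a translation rather than a reflection (the paper handles an analogous reflection/rotation dichotomy in the proof of Lemma~\ref{lem:two:ends:covering} by composing two reflections), or invoking the $k$ disjoint rays more directly. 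As written, the proof has a real gap at the pigeonhole step and the downstream contradiction is not yet reachable.
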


\begin{theo}[{\cite[Theorem~4.1]{DBLP:journals/combinatorica/Thomassen92}}]
Let~$G$ be a connected, infinite, locally finite, almost vertex-transitive, non-planar,
3-connected and almost-4-connected graph with at least one thick end. Then,~$G$ is
contractible into an infinite complete graph.
\end{theo}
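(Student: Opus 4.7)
The plan is to construct, by recursion, pairwise disjoint finite connected subgraphs $B_1,B_2,B_3,\ldots$ of $G$ such that for all $i\neq j$ there is an edge of $G$ between $B_i$ and $B_j$; the $B_n$ are then the branch sets of an $\aleph_0$-complete graph minor, which is the meaning of $G$ being contractible into an infinite complete graph. Three ingredients drive the construction: the infinitely many pairwise vertex-disjoint rays supplied by the thick end, the crossings forced by non-planarity, and the ability to translate any finite configuration arbitrarily far into the thick end supplied by almost vertex-transitivity.

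First I would set up the workspace. Since $G$ has a thick end, I fix an infinite family of pairwise vertex-disjoint rays $R_1,R_2,\ldots$ all converging to it. A Menger-type argument using 3-connectivity (together with the observation that a thick end remains thick after removing any finite vertex set) augments these rays with infinitely many pairwise disjoint finite paths linking them, producing an infinite ``grid-like'' substructure $\Pi\subseteq G$ that captures the asymptotic geometry of the end. Because $G$ is not planar, there is in addition a finite subgraph $F\subseteq G$ containing a subdivision of $K_5$ or $K_{3,3}$. Using almost vertex-transitivity, I pick automorphisms $\sigma_1,\sigma_2,\ldots\in\aut(G)$ whose images $F^{\sigma_i}$ are pairwise vertex-disjoint and attached to $\Pi$ arbitrarily close to the thick end. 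Each $F^{\sigma_i}$ will serve as a ``crossing gadget'' realising an adjacency between branches of $\Pi$ that no planar routing could supply.

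Next I would construct the branch sets recursively, maintaining the invariant that after step $n$ one still has an infinite tail of $\Pi$ and an infinite reservoir of unused crossing gadgets attached to it. Given pairwise disjoint connected $B_1,\ldots,B_n$ that are pairwise adjacent in $G$, I build $B_{n+1}$ from a fresh finite segment of the tail of $\Pi$ together with $n$ fresh crossing gadgets, routed so that the resulting finite connected subgraph is disjoint from $B_1\cup\cdots\cup B_n$ and sends one edge into each $B_i$. Almost-4-connectivity is essential here: any 3-separator of $G$ cuts off only a single vertex, so no small separator can trap $B_{n+1}$ away from any earlier $B_i$ or force it to meet one of them. Choosing the fresh segment and gadgets sufficiently deep into the tail preserves the invariant, and the recursion continues indefinitely.

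The main obstacle is reconciling pairwise disjointness with pairwise adjacency at every step: the new branch set must simultaneously avoid \emph{all} previous branch sets and be adjacent to \emph{each} of them, and the invariant of still having an infinite tail and infinitely many unused gadgets must be preserved forever. This is exactly where the combination of almost-4-connectivity (ruling out pathological small separators that would otherwise force overlaps or disconnect the tail) with the unlimited supply of independent crossing gadgets (guaranteed by non-planarity together with almost vertex-transitivity) is decisive. Once the recursion runs to infinity, the family $\{B_n\}_{n\in\NN}$ exhibits $K_\infty$ as a minor of $G$, establishing that $G$ is contractible into an infinite complete graph.
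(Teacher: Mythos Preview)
The paper does not prove this theorem at all: it is quoted verbatim from Thomassen's 1992 paper and used as a black box, so there is no ``paper's own proof'' to compare against. What follows is therefore a comment on the viability of your sketch on its own terms.

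The outline has the right shape but the decisive step is asserted rather than argued. You write that $B_{n+1}$ is ``routed so that the resulting finite connected subgraph is disjoint from $B_1\cup\cdots\cup B_n$ and sends one edge into each $B_i$''---but this is exactly the content of the theorem, and your crossing-gadget mechanism does not explain it. A single $K_5$ or $K_{3,3}$ subdivision $F^{\sigma_i}$ attached to the tail of $\Pi$ does not by itself produce an edge from $B_{n+1}$ into a \emph{specific, already fixed} finite set $B_i$ lying somewhere else in the graph; it only witnesses local non-planarity near where it sits. To get an edge into each $B_i$ you need paths from the tail back to each $B_i$, and those paths must be pairwise internally disjoint and disjoint from all the $B_j$. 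That is a Menger/linkage statement, not a consequence of having non-planar gadgets nearby, and it is where almost-4-connectivity actually does work---but you have to say what that work is (e.g.\ that no $3$-cut can separate the tail from any $B_i$ once you have arranged each $B_i$ to contain more than one vertex, and then apply Menger in $G$ minus the other $B_j$'s). As written, the gadgets are a red herring: non-planarity enters Thomassen's argument in a different and more delicate way, essentially to guarantee that the grid-like structure $\Pi$ itself can be forced to contain large complete minors, rather than to supply external edges back to earlier branch sets.
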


Overall, we conclude that our limit graph~$\limgraphG$ is planar. We will use a theorem of Babai relating vertex-transitive planar graphs to Archimedean tilings.

\begin{theo}[{Babai~\cite[Theorem 3.1]{DBLP:conf/soda/Babai97}}]
Let~$G$ be a locally finite, connected, vertex-transitive planar graph with at most one end. Then,~$G$ has an embedding in a natural geometry as an Archimedean tiling. All automorphisms of~$G$ extend to automorphisms of the tiling and are induced by isometries of the geometry. 
\end{theo}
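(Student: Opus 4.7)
The plan is to prove the theorem in three stages: obtain a canonical planar embedding, read off the combinatorial Archimedean structure from it, and realize it geometrically via a developing map. First I would establish that $G$ has an essentially unique planar embedding up to reflection. If $G$ is finite (the zero-end case), Whitney's classical theorem does this once $G$ is 3-connected; vertex-transitivity combined with Mader-type bounds (\cref{lem:mad}) forces enough connectivity, with degenerate low-degree cases (paths, cycles) handled separately. For the one-ended case I would invoke Imrich's or Thomassen's extension of Whitney's theorem to 3-connected locally finite planar graphs.

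Given such a canonical embedding, vertex-transitivity forces the cyclic sequence of face sizes at every vertex (the vertex-figure) to be the same at each vertex, up to rotation and reflection. This is precisely the combinatorial data of an Archimedean tiling, so the only remaining task is the geometric realization.

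For that, I would replace each face of size $k$ by a regular Euclidean $k$-gon of a fixed side length. The sum $\alpha$ of interior angles meeting at any vertex is then a constant depending only on the vertex-figure. If $\alpha=2\pi$ I keep the Euclidean model; if $\alpha>2\pi$ (necessarily a finite case) I rescale into a spherical model; if $\alpha<2\pi$ I pass to a hyperbolic model of the curvature that makes the angles at each vertex sum to exactly $2\pi$. I then define a developing map: pick a base face and extend face by face along the dual graph, attaching each new face along the shared edge. Well-definedness of this map reduces to triviality of the holonomy around (i) each single vertex, which is guaranteed by the angle-sum choice, and (ii) each dual 2-cycle, which is automatic. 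The resulting local isometry from the geometric realization of $G$ to the model space is then a homeomorphism, producing the claimed Archimedean tiling.

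For the extension statement, given $\varphi\in\aut(G)$, on each face $\varphi$ restricts to a combinatorial symmetry of a regular polygon and is realized uniquely by an isometry of that polygon; since $\varphi$ preserves incidences these face-wise isometries agree on shared edges and paste together into a global isometry of the tiling, which is in turn induced by an isometry of the ambient geometry. The main obstacle I expect is precisely the global well-definedness of the developing map---ruling out that the local gluings create branch points, overlap, or fail to cover the model space. This is where the ``at most one end'' hypothesis is essential: it rules out the non-trivial topology at infinity that would otherwise allow the developing map to be a non-trivial covering of the model space rather than a homeomorphism onto it.
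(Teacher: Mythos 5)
This theorem is stated in the paper as an external result cited from Babai's 1997 paper; no proof is given in the source text, so I am judging your sketch on its own merits. Your overall architecture---canonical planar embedding via Whitney/Imrich/Thomassen, read off the constant vertex figure, then realize the tiling geometrically via a developing map---is the right strategy and is close in spirit to Babai's argument.

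However, there is a genuine error in the geometric case split, and it is not a mere slip of the pen: you have the spherical and hyperbolic assignments reversed. If $\alpha$ denotes the sum of Euclidean interior angles of regular polygons meeting at a vertex, then $\alpha<2\pi$ is the \emph{spherical} (finite, positive curvature) case, because spherical regular $k$-gons have \emph{larger} interior angles than their Euclidean counterparts, so passing to a sphere of the right radius inflates the angles to make them sum to exactly $2\pi$ (e.g.\ the cube: three squares, $\alpha=270^\circ<360^\circ$). Conversely, $\alpha>2\pi$ is the \emph{hyperbolic} case, since hyperbolic regular $k$-gons have smaller angles and can be shrunk until the sum is $2\pi$ (e.g.\ the $\{7,3\}$ tiling: three heptagons, $\alpha\approx385.7^\circ>360^\circ$). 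Getting this backwards would send your developing map into the wrong model space and the construction would fail. This matches the formula recorded later in the paper, $\sum_{i=1}^d(1/2-1/f_i)=1$, with $<1$ spherical and $>1$ hyperbolic.

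A secondary gap is the completeness of the developing map, which you flag as the main obstacle but do not actually resolve. Simple connectivity of the face complex (plane or sphere) gives that the developing map is well-defined; what is missing is the argument that the piecewise-constant-curvature metric built from regular polygons is \emph{complete}, which is what upgrades the developing map from a local isometry to an isometry onto the full model space. This follows from bounded geometry: vertex-transitivity forces a uniform positive lower bound on the injectivity radius. You should also address why, for a one-ended locally finite 3-connected planar graph, the embedding has \emph{only bounded faces}; this is the real work the one-end hypothesis does (with two or more ends you get unbounded faces and the face-complex no longer fills the plane), rather than the holonomy/covering obstruction you describe, which is not where the argument actually breaks in the multi-ended case.
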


Here, the \emph{natural geometries} are the spheres, the Euclidean
plane and hyperbolic planes (with constant curvature). Their \emph{Archimedean tilings} are
tilings by regular polygons such that the group of isometries of the
tiling acts transitively on the vertices of the tiling.
The spherical geometries arise precisely when the graph is finite, which we can rule out since~$\limgraphG$ is infinite.

We need to deal with the fact that~$\limgraphG$ might not be vertex-transitive in our case, say having two vertex orbits~$O_1$ and~$O_2$. However, in that case, we can consider the graph~$\hat{G}$ obtained from~$\limgraphG$ by removing the vertices from~$O_2$ and joining two vertices~$v_1,v_2$ in~$O_1$ if they have a common neighbor in~$O_2$ and lie on a common face in the (up to reflection unique) planar embedding of a sufficiently large neighborhood of~$v_1$. Indeed, this follows from the infinite version of Whitney's theorem (see \cite{MR0384588} or \cite{MR0685062}) which says that 3-connected planar graphs have unique embedding and the fact that either~$\limgraphG$ is 3-connected or a subdivision of a 3-connected graph (Lemma~\ref{lem:1:end:connectivity}).
The graph~$\hat{G}$ is also planar by construction. It is vertex-transitive since~$\limgraphG$ is edge-transitive and the construction of~$\hat{G}$ is isomorphism invariant. (However,~$\hat{G}$ may have a larger Hadwiger number than~$\limgraphG$.) If~$\limgraphG$ only has one orbit, we simply define~$\hat{G}\coloneqq \limgraphG$.

We will use the following fact about hyperbolic spaces.
\begin{fact}\label{fact:exponential:growth:in:hyperbolic:space}
The circumference of balls in a hyperbolic plane (of constant curvature) grows exponentially with the radius of the ball.
In particular, for an Archimedean tiling of such a hyperbolic plane, the number of tiles at distance~$r$ from a point grows exponentially with the distance~$r$.
\end{fact}
We need some observations that, in the hyperbolic case, allow us to relate distances in the metric space to distances in the graph~$\limgraphG$.  The next lemma essentially says that distances measured in $\limgraphG$, $\hat{G}$, and in the natural geometry agree up to a constant factor and that there can be no dead ends of unbounded depth (see~\cite{DBLP:journals/ijac/Lehnert09} for more information on dead ends in groups).

\begin{lem}
Assume the Archimedean geometry of the tiling of the graph~$\hat{G}$ is hyperbolic. Consider the graphs $\limgraphG$, $\hat{G}$ and the hyperbolic metric space~$X$ into which~$\hat{G}$ has an embedding. Then, it holds that~$V(\hat{G})\subseteq X$,~$V(\hat{G})\subseteq V(\limgraphG)$ and there are positive constants~$c_1,c_2$ and~$c_3$ (depending on~$\hat{G}$) so that
\begin{enumerate}
\item\label{metric:1} for all vertices~$v,v'\in V(\hat{G})$   we have~$\frac{1}{c_1} \cdot d_{\hat{G}}(v,v')\leq d_{\limgraphG}(v,v')\leq c_1 \cdot d_{\hat{G}}(v,v')$,
\item\label{metric:2} for all vertices~$v,v'\in V(\hat{G})$   we have~$\frac{1}{c_2} \cdot d_{\limgraphG}(v,v')\leq d_{X}(v,v')\leq c_2 \cdot d_{\limgraphG}(v,v')$, and
\item\label{metric:3} for every pair of vertices~$v,v'\in V(\limgraphG)$
there is a vertex~$w\in V(\limgraphG)$ with~$d_{\limgraphG}(v',w)\leq c_3$ and~$d_{\limgraphG}(v,w)> d_{\limgraphG}(v,v')$.
\end{enumerate}
\end{lem}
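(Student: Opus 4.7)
The containments $V(\hat{G})\subseteq V(\limgraphG)$ and $V(\hat{G})\subseteq X$ hold by construction of $\hat{G}$: we start from $V(\hat{G})=O_1$, and $\hat{G}$ is embedded in $X$ as an Archimedean tiling. A key structural observation to use throughout is that, since $\limgraphG$ is edge-transitive with two vertex orbits $O_1,O_2$, it is bipartite with parts $O_1,O_2$; hence every edge of $\limgraphG$ goes between $O_1$ and $O_2$, and every edge of $\hat{G}$ corresponds to a $2$-path $v_1uv_2$ in $\limgraphG$ with $u\in O_2$ and $v_1,v_2\in O_1$ lying on a common face around $u$ in the planar embedding.

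For Part~\ref{metric:1}, the upper bound $d_{\limgraphG}(v,v')\leq 2\,d_{\hat{G}}(v,v')$ follows by replacing each $\hat G$-edge with its corresponding $2$-path. For the other direction, note that a shortest $\limgraphG$-path between two vertices of $V(\hat{G})$ alternates between $O_1$ and $O_2$; between two consecutive $O_1$-vertices sharing an $O_2$-neighbor $u$, the vertices are two of the $\deg_{\limgraphG}(u)$ cyclically ordered $O_1$-neighbors of $u$ in the planar embedding. Consecutive entries of this cyclic order lie on a common face with $u$, so they are adjacent in $\hat G$; hence any two neighbors of $u$ in $O_1$ are at $\hat G$-distance at most $\lfloor \deg_{\limgraphG}(u)/2\rfloor$. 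Since $\limgraphG$ is locally finite and its vertices in orbit $O_2$ all have the same (finite) degree, this gives a uniform constant, yielding $c_1$.

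For Part~\ref{metric:2}, the Archimedean embedding maps all edges of $\hat{G}$ to geodesic segments of a common length $\ell$ in $X$, so chaining the edges of a $\hat G$-geodesic gives $d_X(v,v')\leq \ell\,d_{\hat{G}}(v,v')$. Conversely, since the tiling is Archimedean, the vertex set $V(\hat{G})\subseteq X$ is $R$-dense in $X$ for some constant $R$ (a vertex within distance $R$ of every point of $X$). An $X$-geodesic of length $L=d_X(v,v')$ can thus be sampled at $\lceil L/R\rceil +1$ points, each point approximated by a vertex of $\hat G$ within $X$-distance $R$, and consecutive approximants are at bounded $X$-distance and hence at bounded $\hat G$-distance (by the upper bound just shown together with local finiteness). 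This yields $d_{\hat{G}}(v,v')\leq C\,d_X(v,v')$ for a constant $C$. Combining both directions with Part~\ref{metric:1} produces $c_2$.

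For Part~\ref{metric:3}, given $v,v'\in V(\limgraphG)$, first replace $v,v'$ by vertices $\tilde v,\tilde v'\in V(\hat G)$ at $\limgraphG$-distance at most $1$ (an $O_1$-neighbor if the vertex lies in $O_2$). Consider the unique $X$-geodesic from $\tilde v$ through $\tilde v'$, and extend it past $\tilde v'$ by a distance $\Delta$ to a point $x\in X$, so that $d_X(\tilde v,x)=d_X(\tilde v,\tilde v')+\Delta$. Pick a vertex $\tilde w\in V(\hat G)$ with $d_X(\tilde w,x)\leq R$. Taking $\Delta$ sufficiently large (depending only on $c_2,R$ and the constants from Parts~\ref{metric:1} and \ref{metric:2}), the triangle inequality in $X$ combined with Part~\ref{metric:2} forces $d_{\limgraphG}(\tilde v,\tilde w)> d_{\limgraphG}(\tilde v,\tilde v')$, while $d_{\limgraphG}(\tilde v',\tilde w)$ is bounded by a constant depending only on $\Delta,R$ and $c_2$. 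The $\pm 1$ corrections between $(v,v')$ and $(\tilde v,\tilde v')$ are absorbed into the final constant $c_3$. The main obstacle is managing this last step cleanly: one must choose $\Delta$ so that the guaranteed increase in $d_X$ exceeds the slack introduced by the constants $c_1,c_2$ and the $1$-step corrections — exponential growth of balls in $X$ (Fact~\ref{fact:exponential:growth:in:hyperbolic:space}) plays no direct role here, but it is what ultimately justifies that such unbounded extensions of geodesics exist in $X$ without running out of room.
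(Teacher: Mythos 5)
Your Parts~\ref{metric:1} and \ref{metric:2} are correct and follow essentially the same route as the paper (for Part~\ref{metric:1}, the observation that all $O_1$-neighbors of a fixed $O_2$-vertex~$u$ lie on a common cyclic order of faces around~$u$, hence have pairwise $\hat G$-distance bounded by $\deg(u)$; for Part~\ref{metric:2}, bounded edge length plus bounded tile diameter / density). One small imprecision: in Part~\ref{metric:2} you justify that consecutive approximants are at bounded $\hat G$-distance ``by the upper bound just shown together with local finiteness''; the upper bound $d_X\leq\ell\, d_{\hat G}$ is the wrong direction for this, and what you actually need is that bounded $X$-distance forces bounded $\hat G$-distance, which follows from bounded tile diameter, local finiteness, and vertex-transitivity. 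This is a cosmetic slip, not a gap.

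Part~\ref{metric:3}, however, has a genuine gap. You assert that one can take $\Delta$ depending only on $c_2$, $R$ and the other constants, so that the triangle inequality in~$X$ together with Part~\ref{metric:2} forces $d_{\limgraphG}(\tilde v,\tilde w)>d_{\limgraphG}(\tilde v,\tilde v')$. But Part~\ref{metric:2} gives only a multiplicative comparison, and multiplicative comparisons do not transfer additive inequalities. Concretely, the best your argument yields is
\[
d_{\limgraphG}(\tilde v,\tilde w)\;\geq\;\tfrac1{c_2}\bigl(d_X(\tilde v,\tilde v')+\Delta-R\bigr),
\qquad
d_{\limgraphG}(\tilde v,\tilde v')\;\leq\;c_2\,d_X(\tilde v,\tilde v'),
\]
so to conclude $d_{\limgraphG}(\tilde v,\tilde w)>d_{\limgraphG}(\tilde v,\tilde v')$ you would need $\Delta> R+(c_2^2-1)\,d_X(\tilde v,\tilde v')$. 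Since $c_2>1$ and $d_X(\tilde v,\tilde v')$ is unbounded over pairs $v,v'$, this~$\Delta$ (and therefore the resulting~$c_3$) is \emph{not} a constant. The argument needs an additive, not merely multiplicative, comparison between distances from~$\tilde v$. The missing ingredient is the $\delta$-hyperbolicity of~$X$: by stability of quasi-geodesics (the Morse lemma), a $\limgraphG$-geodesic from~$\tilde v$ to~$\tilde w$ — which is a uniform quasi-geodesic in~$X$ by Part~\ref{metric:2} — must pass within bounded $X$-distance of~$\tilde v'$ when~$\tilde w$ lies on the extension of the $X$-geodesic through~$\tilde v'$. Picking a vertex~$u$ on that graph-geodesic with $d_X(u,\tilde v')\leq M'$ yields $d_{\limgraphG}(\tilde v,\tilde w)=d_{\limgraphG}(\tilde v,u)+d_{\limgraphG}(u,\tilde w)\geq d_{\limgraphG}(\tilde v,\tilde v')-c_2M'+\tfrac1{c_2}(\Delta-R-M')$, which exceeds $d_{\limgraphG}(\tilde v,\tilde v')$ once $\Delta$ is a \emph{constant} larger than $R+M'+c_2^2M'$. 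Finally, your closing remark that exponential ball growth is ``what ultimately justifies that such unbounded extensions of geodesics exist'' is off the mark: geodesic extendability is just geodesic completeness of~$X$, which holds equally in the Euclidean plane; exponential growth plays no role there.
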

\begin{proof}
Part \ref{metric:1} follows from the fact that~$\limgraphG$ is a 3-connected, edge-transitive, planar graph with finite maximum degree, and thus 
there is a uniform bound for the diameter of~$N(v)$ in~$\hat{G}$ across all~$v\in O_2$.

Part \ref{metric:2} follows from the fact that there is an absolute bound on the diameter of the tiles in the Archimedean tiling.

Finally, Part \ref{metric:3} follows from the fact that the statement
is true in the space~$X$ (since ever geodesic can be extended) and that every point in~$X$ is at bounded
distance from~$V(\limgraphG)$.
\end{proof}

We can now use Babai's sphere packing argument (see \cite{DBLP:journals/jgt/Babai91}) to rule out that the Archimedean geometry of~the vertex-transitive graph $\hat{G}$ is hyperbolic (i.e., has negative curvature).

\begin{lem}\label{lem:spherePacking}
If the  Archimedean geometry of the tiling of~$\hat{G}$ is hyperbolic, then~$\lim_{j\rightarrow \infty} \Had(G_j) =\infty$.
\end{lem}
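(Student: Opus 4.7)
The plan is to exhibit $K_t$-minors in $\limgraphG$ for arbitrarily large $t$ and to then transfer these to the approximating graphs $G_j$. Since any $K_t$-minor uses only finitely many vertices and edges, and since $G_j$ converges pointwise to $\limgraphG$ (see \cref{lem:limitGraphs}), any fixed $K_t$-minor of $\limgraphG$ already appears in $G_j$ for all sufficiently large $j$; it therefore suffices to prove that $\limgraphG$ has unbounded Hadwiger number.

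To construct a $K_t$-minor in $\limgraphG$, I would combine \cref{fact:exponential:growth:in:hyperbolic:space} with Parts~\ref{metric:1} and~\ref{metric:2} of the preceding lemma. Since the hyperbolic space $X$ has exponential surface-area growth and graph-distances in $\limgraphG$ agree with $X$-distances up to constant factors on $V(\hat{G})$, balls of radius $R$ in $\limgraphG$ contain $e^{\Omega(R)}$ vertices. For $R$ large enough, I pick $t = e^{\Omega(R)}$ vertices $v_1,\ldots,v_t$ inside such a ball that are pairwise at graph-distance at least $R/c$ for a constant $c$, choose the balls $B_{r,\limgraphG}(v_i)$ with $r$ much smaller than $R/c$ as pairwise disjoint candidate branch sets, and for each pair $(i,j)$ trace a graph path of length $O(R)$ connecting $B_i$ and $B_j$, obtained by tracking a hyperbolic geodesic between $v_i$ and $v_j$ in $X$ and pulling it back to $\limgraphG$ via the metric comparison.

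The main obstacle is to make the $\binom{t}{2}$ connecting paths internally disjoint from each other and from all branch sets, so that they witness an actual $K_t$-minor. This is the heart of Babai's sphere packing argument from~\cite{DBLP:conf/soda/Babai97}: the total number of vertices needed for the paths is polynomial in $R$ and $t$, whereas the hyperbolic ball of radius $R$ provides exponentially many vertices which can be used to route paths around conflicts. Part~\ref{metric:3} of the preceding lemma, which rules out bounded-depth dead ends, is exactly what is needed to guarantee that a path can always step farther away from an obstacle rather than being trapped. A careful routing argument, processing pairs $(i,j)$ one at a time and using the ample unused area around each already-placed path to detour around it, yields the desired family of internally disjoint paths. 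Since $t$ was arbitrary, $\limgraphG$ has unbounded Hadwiger number, and hence $\lim_{j\to\infty}\Had(G_j)=\infty$.
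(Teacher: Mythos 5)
The transfer step at the start of your proposal is sound: a $K_t$-minor of $\limgraphG$ uses finitely many vertices and edges, so pointwise convergence forces it into $G_j$ for all large~$j$. The genuine gap is in the construction of the $K_t$-minor inside $\limgraphG$, which you explicitly defer to ``a careful routing argument.'' That routing is in fact the entire difficulty, and your counting heuristic does not settle it. If you place $t=e^{\Omega(R)}$ centers pairwise at graph distance $\Omega(R)$ in a ball of radius $R$, then the $\binom{t}{2}$ connecting paths of length $O(R)$ require roughly $t^2 R$ vertices; since~$t$ is exponential in~$R$, this is \emph{not} polynomial in~$R$, and whether it fits in the roughly $d^{O(R)}$ vertices of the ball depends on a race of exponential constants that you do not control (and that becomes worse as the pairwise separation constant~$c$ shrinks). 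Even if the volume suffices, a greedy ``route around obstacles'' scheme in a bounded-degree graph can be trapped, and Part~\ref{metric:3} of the preceding lemma only gives you an escape from dead ends in the hyperbolic direction, not a guarantee that the escape avoids the exponentially many already-placed paths.

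The paper sidesteps all of this by \emph{not} constructing a complete minor directly. Instead, it works in the finite graph~$G_j$: it takes a maximal $2t$-separated set~$S$, assigns each vertex of~$G_j$ to the nearest element of~$S$, and lets~$H$ be the resulting contraction minor. To lower-bound the degree of a branch set at~$v\in S$, it finds only superlinearly many (not exponentially many) boundary vertices~$Y$ at radius~$t$ with pairwise separation growing superlinearly, and then for each~$y\in Y$ builds an outward walk (this is the real use of Part~\ref{metric:3}) that must eventually leave the branch set of~$v$, hitting some other branch set~$s_y$; the separation bound forces the~$s_y$ to be distinct, so~$\deg_H(v)\geq f_1(t)\to\infty$. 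The complete minor is then obtained for free from Kostochka's bound (Theorem~\ref{theo:1}) applied to~$H$. This route requires no internally disjoint path system at all, only one outward walk per boundary vertex, and the disjointness it needs is supplied automatically by the nearest-point assignment. If you want to salvage your proposal, the cleanest fix is to adopt exactly this move: aim for a minor of large \emph{minimum degree} rather than a complete minor, and invoke Theorem~\ref{theo:1} to finish.
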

\begin{proof}
By the previous lemma, distances in~$\limgraphG$ agree with distances
  in~$\hat{G}$ and with distances in the hyperbolic metric space~$X$
  up to a constant factor.
  We can therefore use~Fact~\ref{fact:exponential:growth:in:hyperbolic:space} to conclude the following. There are superlinear functions~$f_1,f_2\in  \omega(t)$ so that for~$t\in \mathbb{N}$ we can, 
  on the boundary of the ball of radius~$t$, that is in~$\partial B_{t,\limgraphG}(\overline v) \coloneqq B_{t,\limgraphG}(\overline v)\setminus B_{t-1,\limgraphG}(\overline v)= N(B_{t-1,\limgraphG}(\overline v))$, find~$f_1(t)$ distinct vertices with a pairwise distance of at least~$f_2(t)$ outside of~$B_{t-c_3,\limgraphG}(\overline v)$ (i.e., the distance is measured in~$\limgraphG-B_{t-c_3,\limgraphG}(\overline v)$). (Here we also use that in the hyperbolic plane~$X$, a path connecting points on the boundary of a ball which are shortest among all paths that do not enter the ball lies entirely in the boundary of the ball.)

  Let~$t\in \mathbb{N}$ be some integer.  Choose~$j_0$ sufficiently
  large so that for all~$j\geq j_0$ the balls of radius~$(3+c_3)t$ in~$G_j$ are
  isomorphic to balls of radius~$(3+c_3)t$ in~$\limgraphG$. Let~$S$ be an
  inclusion-wise maximal set in~$G_j$ of vertices of pairwise distance at
  least~$2t$.  Assign every vertex of~$G_j$ to a vertex in~$S$ of
  closest distance, ties broken arbitrarily.  
  This gives us a minor~$H$ of~$G_j$.  We claim that the minimum
  degree of~$H$ tends to infinity as~$t$ tends to infinity. This will
  show the statement by Theorem~\ref{theo:1}.

  Consider the ball~$B_{t,G_j}(v)$ around a vertex~$v\in S$ of
  radius~$t$. Choose a set~$Y$ of~$f_1(t)$ vertices in $G_j$ at distance~$t$ from~$v$ that have pairwise distance at least~$\min\{f_2(t),2(2+c_3)t+1\}$ outside of~$B_{t-c_3,G_j}(v)$. These exist since balls of radius~$(3+c_3)t$ in~$\limgraphG$ are isomorphic to balls of
  radius~$(3+c_3)t$ in~$G_j$ and vertices of~$Y$ are a distance of at least~$(2+c_3)t$ away from the border of the ball. (Thus, a shortest path from a vertex in~$Y$ leaving the ball of radius~$(3+c_3)t$ and coming back to a vertex in~$Y$ has length at least~$2 (2+c_3)t+1$.)
  
  For each~$y\in Y$ we define a vertex~$s_y$ in~$S\setminus {v}$ that is relatively close to~$y$ as follows.
  We start a walk in~$y_0= y$. We take  at most~$c_3$ steps to get to a vertex that is further away from~$v$ than~$y$. We repeat the process. 
  Overall we obtain a walk~$y_0,y_1,y_2,\ldots$
  with a subsequence~$y=y_{i_0},y_{i_1},y_{i_2}$ so that~$i_{j+1}-i_j\leq c_3$ and~$d(y_{i_{j+1}},v)> d(y_{i_{j}},v)$.
  Let~$y_i$ be the first vertex on this walk assigned to a vertex~$s_y\in S$ other than~$v$.
  This means that~$y_i$ belongs to the branch set of~$s_y$.
  Note that for the minor~$H$
  the branch set containing~$s_y$ is adjacent to the branch set of~$v$ since~$y_i$ is adjacent to~$y_{i-1}$.
  
  It suffices now to argue that the~$s_y$ are distinct since then the branch set~$v$ has~$f_1(t)$ neighbors.
  This can be seen as follows. The distance between~$s_y$ and~$y$ is at most~$tc_3+2t-1$. (Starting in~$y$ after at most~$c_3 t$ steps we reach a vertex of distance at least~$2t$ from~$v$ which cannot belong to the branch set of~$v$.
  From the first vertex~$y_i$ not in the branch set of~$v$, in at most~$2t-1$ steps we reach the vertex~$s_y\in S$.) This means that~$s_y$ and~$s_y'$ for distinct~$y,y'\in Y$ have distance at least~$\min\{f_2(t),2(2+c_3)t+1\}-2(tc_3+2t-1)$. For~$t$ sufficiently large, this number is positive, which shows that the~$s_y$ are distinct. 
  Overall, the number of neighbors a branch set in~$H$ has is at least~$f_1(t)=|Y|$ and grows as~$t$ grows. This implies that as~$t$ increases, the average degree of~$H$ and thus by Theorem~\ref{theo:1} the Hadwiger number of~$G_j$ increases as~$j$ increases.
\end{proof}

\begin{lem}\label{lem:char0}
  If the limit graph~$\limgraphG$ is planar with one end, then for sufficiently large~$j$, the graph~$G_j$ has Euler characteristic 0.
\end{lem}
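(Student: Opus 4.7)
The plan is to transport the planar Archimedean embedding of~$\hat G$ into a combinatorial cellular embedding of~$G_j$ for sufficiently large~$j$, and then compute~$\chi(G_j)$ directly via the Euclidean angle-sum identity at each vertex.

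By Lemma~\ref{lem:spherePacking}, the Archimedean geometry underlying~$\hat G$ is not hyperbolic; since~$\hat G$ is infinite, it is not spherical either, so it must be Euclidean. Consequently,~$\hat G$ embeds as an Archimedean tiling~$\CT$ of the Euclidean plane, and after reinstating the degree-$2$ vertices (in case~$\limgraphG$ has two orbits) so does~$\limgraphG$. Let~$L$ be a bound on the face sizes of~$\CT$. The embedding induces a rotation (cyclic order of incident edges) at each vertex of~$\limgraphG$, and by the essential uniqueness of planar embeddings of $3$-connected planar graphs, this rotation depends only on the combinatorial isomorphism type of a bounded-radius ball around the vertex.

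For~$j$ large, Lemma~\ref{lem:limitGraphs}(\ref{convergence:lemma:part:balls}) provides, for each~$v\in V(G_j)$, an isomorphism from~$G_j[B_{L+1,G_j}(v)]$ onto~$\limgraphG[B_{L+1,\limgraphG}(x)]$ for some~$x\in\{1,2\}$. Pulling back the planar rotation at~$x$ yields a rotation~$\pi_v$ at~$v$; because the rotation is invariant under graph automorphisms of the ball, $\pi_v$ does not depend on the chosen isomorphism. Furthermore, rotations at neighboring vertices are forced to be compatible (up to a single global reflection) because their balls overlap on a neighborhood that already determines a common planar embedding. The family~$(\pi_v)$ thus constitutes a rotation system~$\pi$ on~$G_j$, giving a cellular embedding on some closed surface~$\Sigma$.

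Since face walks are determined locally by~$\pi$ and every face walk of~$\CT$ closes within~$L$ steps, every face walk of~$G_j$ closes after the same number of steps as its counterpart in~$\CT$; hence the multiset of face sizes incident to each vertex of~$G_j$ matches that at a vertex of~$\CT$. Summing locally, $2E(G_j)=\sum_v\deg(v)$ and $F(G_j)=\sum_v\sum_p a_p(v)/p$, where~$a_p(v)$ denotes the number of~$p$-faces incident to~$v$. The Euclidean vertex-angle identity~$\sum_p a_p(1-\tfrac{2}{p})=2$, valid at every vertex of an Archimedean Euclidean tiling, then yields~$V-E+F=0$ and thus~$\chi(\Sigma)=0$. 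The main obstacle is the global compatibility of the rotation systems transferred vertex-by-vertex, which is why the local-isomorphism radius is chosen to exceed~$L$; the two-orbit case introduces no further difficulty since suppressing the degree-$2$ vertices contributes equally to~$V$ and~$E$ and therefore leaves~$\chi$ invariant.
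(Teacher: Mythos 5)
Your overall plan coincides with the paper's: transport the Archimedean tiling structure of~$\limgraphG$ to~$G_j$ via the ball-isomorphisms of Lemma~\ref{lem:limitGraphs}, assemble a cellular embedding of~$G_j$ on a closed surface, and then use the Euclidean angle-sum identity at each vertex to get~$\chi=0$. The paper, however, delegates the difficult step — that the local embeddings are well-defined and globally consistent — to the detailed arguments in \cite[Section~6.2]{DBLP:journals/jgt/Babai91}, whereas you assert it directly. Your justification has a genuine gap: you invoke ``essential uniqueness of planar embeddings of $3$-connected planar graphs'' to conclude that the rotation at~$v$ is determined by the combinatorial type of~$B_{L+1,\limgraphG}(x)$, but a ball in a planar graph is typically \emph{not} $3$-connected, so Whitney's theorem does not apply to it. What is needed (and what Babai's argument supplies) is that in an almost-$4$-connected planar graph with bounded face sizes, faces — and hence the rotation at a vertex — can be recognized purely from a bounded-radius neighborhood, which is not the same as uniqueness of embeddings of the ball itself. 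Likewise, the claim that overlapping balls ``already determine a common planar embedding'' of their intersection is not established; the intersection of two balls need not be $2$-connected, let alone determine a rotation. These are precisely the steps the paper is relying on Babai's work to carry out.

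A second, smaller issue is your treatment of the two-orbit case. You dismiss it on the grounds that $O_2$-vertices have degree~$2$, but the earlier lemma in the paper establishes that $\limgraphG$ is either almost-$4$-connected \emph{or} has degree-$2$ second-orbit vertices; when $\limgraphG$ is almost-$4$-connected and has two orbits, the $O_2$-vertices can have higher degree. In that situation the vertex-angle identity $\sum_p a_p(v)(1-2/p)=2$ is not automatically valid at $O_2$-vertices (it is an identity tied to the regular-polygon tiling $\hat G$, not directly to~$\limgraphG$). The paper sidesteps this by applying the curvature computation to the vertex-transitive graph~$\hat{G_j}$ and observing it embeds on the same surface as~$G_j$; your direct computation on~$G_j$ would require an additional argument that the combinatorial curvature vanishes at $O_2$-vertices as well, which you do not supply.
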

\begin{proof}
  By \cref{lem:spherePacking}, we know that the graph~$\hat{G}$ can be
  interpreted as Archimedean tiling of the Euclidean plane.

Suppose~$\hat{G}$ has degree~$d$.
Pick an arbitrary vertex $v$ and let~$f_1,\ldots,f_d$ be the number of edges for each of the $t$ faces incident with~$v$.
Since $\hat{G}$ is vertex-transitive, the numbers $f_1,\ldots,f_d$ do not depend on $v$.
(We actually know that there can be at most two different face sizes, i.e., $|\{f_1,\ldots,f_d\}|\leq 2$, but we do not use this fact.)
The arguments in~\cite{DBLP:conf/soda/Babai97} in fact tell us that the curvature of the space on which the tiling acts can be described in terms of the face sizes around a vertex. In particular, we know that~$\sum_{i=1}^d(1/2-1/f_i)=1$, since otherwise the Archimedean tiling will not be on the Euclidean plane.

The arguments in~\cite[Section 6.2]{DBLP:journals/jgt/Babai91} show
that, for sufficiently large~$j$, the graph~$G_j$ has locally a unique
planar embedding. Babai further argues that these local embeddings are locally consistent and
overall give us an embedding of~$G_j$ into some surface. In analogy to
our previous operation, we can construct the graph~$\hat{G_j}$ by
removing vertices from orbit~$O_2$ and joining vertices of~$O_1$ if
they are at distance 2 and share a face.  As explained in~\cite[Section 6.2]{DBLP:journals/jgt/Babai91}, the
graph~$\hat{G_j}$ satisfies~$\sum_{i=1}^d(1/2-1/f_i)=1$. This
implies that the embedding of~$\hat{G_j}$ and thus the embedding
of~$G_j$ has Euler characteristic~0. (By double counting, a vertex of
degree~$d$ contributes 1 vertex,~$d/2$ edges and~$1/f_i$ faces
to each adjacent face, so in total~$\sum_{i\in[d]} 1/f_i$ to the
faces. Thus, the contribution to the Euler characteristic is $1-d/2+\sum_{i\in[d]} 1/f_i=0$ for each vertex).
\end{proof}

Overall, we have proven that for~$j$ sufficiently large the graph~$G_j$ admits an embedding on the torus or the Klein bottle.  Now, we can use Babai's classification~\cite{DBLP:journals/jgt/Babai91} or Thomassen's classification~\cite{MR1040045} for such graphs. 
In particular, the automorphism group~$\Aut(G_j)$ has an abelian normal subgroup 
of index at most 12 that is generated by at most two elements. Moreover, the automorphism group~$\Aut(G_j)$ is solvable.

\subsection{Infinitely many ends}\label{subsec:infinitely:many:ends}

For the vertex-transitive case with infinitely many ends, Babai proved the following theorem. 
\begin{theo}[{\cite[Theorem 5.4]{DBLP:journals/jgt/Babai91}}]
Suppose~$G_1,G_2,G_3,\ldots$ is a convergent sequence of finite
connected vertex-transitive graphs of bounded degree. If the limit graph~$\limgraphG$ of the sequence has more than two ends, then~$\lim_{j\to \infty}\Had(G_j) = \infty$.
\end{theo}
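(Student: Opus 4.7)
The plan is to construct a $K_n$-minor in $G_j$ for arbitrarily large $n$ (and correspondingly large $j$) by extracting a highly-branching tree-like substructure from the limit graph $\limgraphG$, and then showing that within the finite vertex-transitive graph $G_j$ this substructure is forced to ``close up,'' producing enough adjacencies to realize a large clique as a minor.

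First I would extract a branching structure from $\limgraphG$. Since $\limgraphG$ is infinite, connected, locally finite, vertex-transitive, and has more than (and hence infinitely many) ends, there is a finite separator $C$ such that $\limgraphG-C$ has at least three infinite components. Using vertex-transitivity, each infinite component contains a translate $C^\varphi$ whose removal again separates off multiple infinite components. Iterating this construction, one builds an infinite ternary tree $\CT$ of translates of $C$, with adjacent translates joined by internally-disjoint paths in $\limgraphG$. In particular, balls of radius $r$ in $\limgraphG$ grow exponentially in $r$. By (the vertex-transitive analogue of) \cref{lem:limitGraphs}, for every fixed radius $t$ and all sufficiently large $j$, every ball of radius $t$ in $G_j$ is isomorphic to the corresponding ball in $\limgraphG$, so the tree structure and the exponential growth transfer locally into $G_j$.

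Since $G_j$ is finite but the local copies of $\CT$ have exponential size, these copies must ``wrap around'' at some radius $r_j$ of order $\log|V(G_j)|$: disjoint branches of $\CT$ inside $\limgraphG$ are forced to reconnect inside $G_j$. The plan is to locate, inside a large enough ball of $G_j$, several pairwise-disjoint finite subtrees of $\CT$, and then exploit the forced wrap-around together with vertex-transitivity of $G_j$ -- which ensures the reconnection pattern is homogeneous across the graph -- to find pairwise-disjoint paths between every pair of subtrees. Each subtree, together with its share of connecting paths, then becomes a branch set of the desired $K_n$-minor.

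The main obstacle is quantifying the wrap-around precisely enough to realize $\Omega(n)$ pairwise-connected branch sets, not merely to show that \emph{some} reconnection occurs. The plan here is to combine a Menger-style disjoint-paths argument, applied to the bipartite subgraph between pairs of local subtrees, with a pigeonhole argument that exploits the regular reconnection pattern forced by vertex-transitivity of $G_j$. Showing that this regular pattern, derived from local tree-likeness with branching at every scale, yields unbounded Hadwiger number as $j\to\infty$ is the technical heart of the proof and the step most likely to require subtle combinatorial work.
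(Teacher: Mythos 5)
Your approach diverges substantially from the paper's, and it contains a genuine gap at the step you yourself flag as the ``technical heart.''

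The paper's proof is considerably more local and avoids your global machinery entirely. It picks a radius~$t$ so that $\limgraphG - B_{t,\limgraphG}(\overline v)$ has at least~$m$ infinite components, takes a maximal $2t$-separated set~$S$ in~$G_j$, and contracts the resulting Voronoi cells to a minor~$H$. Since balls of radius~$4t$ in~$G_j$ are isomorphic to balls in~$\limgraphG$, each cell can reach at least~$m$ distinct other cells via the~$m$ infinite components, so~$H$ has minimum degree at least~$m$. Kostochka's theorem (\cref{theo:1}) then finishes: $\Had(G_j)$ grows once the average degree of a minor exceeds $\alpha_h$. No exponential growth, no ternary tree, and crucially no reasoning about how the finite graph~$G_j$ globally ``closes up.'' The argument only needs to see one ball at a time.

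The gap in your plan is precisely the passage from ``exponentially many branches forced to reconnect'' to ``$\Omega(n)$ branch sets pairwise joined by disjoint paths.'' Wrap-around tells you that distinct branches of~$\CT$ meet somewhere in~$G_j$, but it says nothing about \emph{which} pairs are joined, nor that these connections can be realized by vertex-disjoint paths between many branch sets simultaneously. Vertex-transitivity of~$G_j$ guarantees that all balls of a given radius look alike; it does not constrain how branches sitting at the boundary of a large ball reconnect with one another, so the ``homogeneous reconnection pattern'' you invoke is not available. Moreover you aim directly at a $K_n$-minor, which needs a globally consistent family of disjoint paths -- a much stronger structure than the paper needs. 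A workable repair would be to abandon the explicit $K_n$ target and instead aim, as the paper does, for a minor of large minimum (or average) degree; once you think in those terms, you can dispense with the tree $\CT$, the exponential growth, and the wrap-around analysis altogether, and the Voronoi-cell contraction does the whole job. As written, the proposal is not a proof.
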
 

An inspection of the proof of the theorem shows that it can be extended to the edge-transitive case.

\begin{theo}\label{thm:inf:ends:large:Hadwiger}
Suppose that~$G_1,G_2,G_3,\ldots$ is a convergent sequence of finite
connected edge-transitive graphs of bounded degree. If the limit~$\limgraphG$ of the sequence has more than two ends, then $\lim_{j\to \infty}\Had(G_j) = \infty$.
\end{theo}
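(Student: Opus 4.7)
The plan is to reduce to Babai's theorem for vertex-transitive graphs. If $\limgraphG$ is vertex-transitive then so is each $G_j$ along a subsequence (an edge-transitive graph with a single vertex orbit is vertex-transitive), and we invoke Babai's theorem directly. Otherwise, each $G_j$ is bipartite between two orbits $O_1^j, O_2^j$ and $\limgraphG$ is bipartite between orbits $O_1, O_2$ in the limit (using \cref{lem:limitGraphs} for edge-transitivity of $\limgraphG$).

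For each $j$ I would construct the auxiliary graph $\hat G_j$ on vertex set $O_1^j$, declaring $v$ and $v'$ adjacent if they share a neighbor in $O_2^j$. Since $\aut(G_j)$ acts transitively on $O_1^j$ preserving this relation, $\hat G_j$ is vertex-transitive. If $G_j$ has degree at most $d$, then $\hat G_j$ has degree at most $d(d-1)$, so after passing to a subsequence the $\hat G_j$ converge to a limit $\hat{\limgraphG}$ that coincides with the analogous construction applied to $\limgraphG$.

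The crucial step is to verify that $\hat{\limgraphG}$ has infinitely many ends. Every ray of $\limgraphG$ contains infinitely many vertices of $O_1$, and successive pairs share a neighbor in $O_2$, so they form a walk in $\hat{\limgraphG}$ from which a ray can be extracted. Conversely, any ray of $\hat{\limgraphG}$ lifts to a ray of $\limgraphG$ by interpolating the chosen common neighbors. For the equivalence of rays, any finite separator $T$ of $\limgraphG$ yields the finite separator $T' = (T \cap O_1) \cup N_{\limgraphG}(T \cap O_2) \subseteq O_1$ of $\hat{\limgraphG}$, and any finite separator of $\hat{\limgraphG}$ extends to a finite separator of $\limgraphG$ by adjoining the $O_2$-vertices whose $O_1$-neighborhood it covers. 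This correspondence preserves the equivalence relation defining ends, so $\hat{\limgraphG}$ has the same number of ends as $\limgraphG$, namely infinitely many. Babai's theorem applied to $(\hat G_j)$ then yields $\lim_{j \to \infty} \Had(\hat G_j) = \infty$.

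Finally I would show $\Had(G_j) \geq \Had(\hat G_j)$: given a $K_t$-minor of $\hat G_j$ with branch sets $B_1, \ldots, B_t \subseteq O_1^j$, I enlarge each $B_i$ to a $G_j$-connected set $B_i^{*}$ by adding, for each edge of a spanning tree of $\hat G_j[B_i]$, a chosen common neighbor in $O_2^j$. For each adjacent pair $B_i, B_k$ in the minor, I pick a common neighbor $u \in O_2^j$ of some $v_i \in B_i$ and $v_k \in B_k$ and add it to $B_i^{*}$, which preserves connectivity and furnishes the needed adjacency to $B_k^{*}$ through $v_k$. Selecting these witnesses greedily so that no $u \in O_2^j$ is shared between distinct $B_i^{*}$ is possible because each $u$ has at most $d$ neighbors in $O_1^j$ and $|O_2^j|$ is large. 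The main obstacle is the ends-preservation step, which requires the set-theoretic correspondence between finite separators of $\limgraphG$ and of $\hat{\limgraphG}$ to respect the end-defining equivalence on rays; the lift of minors is essentially mechanical once the bounded bipartite structure is exploited.
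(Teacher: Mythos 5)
Your approach --- passing to the auxiliary vertex-transitive graph $\hat G_j$ on the orbit $O_1^j$, showing the limit $\hat{\limgraphG}$ inherits infinitely many ends, invoking Babai's vertex-transitive theorem, and then transferring the conclusion back to $G_j$ --- is genuinely different from the paper's proof, which adapts Babai's argument directly (it picks a $2t$-separated net $S$ in $G_j$, assigns vertices to nearest net points, and shows each branch set of the resulting minor has $\geq m$ neighbours by tracking shortest paths back to $m$ far-apart representatives of distinct infinite components of $\limgraphG - B_{t}(\bar v)$). Your construction of $\hat G_j$ is legitimate and the ends-preservation argument is essentially sound (in fact simpler than you wrote: a finite $T'\subseteq O_1$ separating $\hat{\limgraphG}$ already separates $\limgraphG$, since $\limgraphG$ is bipartite so the $O_1$-vertices of any $\limgraphG$-path form a $\hat{\limgraphG}$-walk; there is no need to adjoin $O_2$-vertices).

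The gap is in the final transfer step. The inequality $\Had(G_j)\geq\Had(\hat G_j)$ is false. Take $G$ to be the vertex--face incidence graph of the cube: it is edge-transitive (the cube is flag-transitive), bipartite with two orbits of sizes $8$ and $6$, has bounded degree, and is planar, so $\Had(G)=4$. But $\hat G$ on the $8$ cube vertices is the cocktail-party graph $K_{2,2,2,2}$ (two vertices share a face iff they are not antipodal), and $\Had(K_{2,2,2,2})=6$. Your greedy disjointness argument breaks down for the concrete reason that a single $u\in O_2^j$ can be the \emph{unique} common neighbour witnessing an essential spanning-tree edge of $\hat G_j[B_i]$ and simultaneously the unique witness for an essential edge of $\hat G_j[B_k]$ (this only needs $\deg(u)\geq 4$); then $u$ must go into both $B_i^*$ and $B_k^*$, and the fact that $|O_2^j|$ is large is of no help since it does not create additional witnesses for those specific pairs. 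What you would actually need --- and what is plausible but not established by your argument --- is a bound of the form $\Had(\hat G_j)\leq f(\Had(G_j),d)$ exploiting the degree bound $d$; this is a nontrivial statement about bounded-radius shallow minors and would require its own careful proof. As written, the proposal does not close the argument, whereas the paper's direct construction of a high-minimum-degree minor of $G_j$ sidesteps the issue entirely.
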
 

\begin{proof}
We can essentially apply Babai's proof \cite[Theorem 5.4]{DBLP:journals/jgt/Babai91} with obvious adaptations accounting for the possibility of two orbits as follows.
Choose $t$ such that for every vertex $\overline v\in V(\limgraphG)$ and ball~$B_{t,\limgraphG}(\overline v)$ the graph~$\limgraphG-B_{t,\limgraphG}(\overline v)$ has at least~$m$ infinite components (this is possible since there can only be two isomorphism types of balls).
Choose~$j_0$ so that for~$j\geq j_0$ balls of radius~$4t$ in~$G_j$ are isomorphic to balls of radius~$4t$ in~$\limgraphG$.
Let~$S$ be a maximal set of points in~$G_j$ of pairwise distance at least $2t$. 

Assign every vertex~$x\in V(G_j)$ to a vertex of~$s_x\in S$ of closest distance, ties broken arbitrarily. This gives us a minor~$H$ of~$G_j$, where two vertices are in the same branch set if they are assigned to the same vertex of~$S$.
We claim that the minimum degree of~$H$ is at least~$m$. 
This will show the statement by Theorem~\ref{theo:1}.

Pick a vertex $v\in S$.
In the limit graph~$\limgraphG$ the graph $\limgraphG-B_{t,\limgraphG}(\overline v)$ has at least~$m$ infinite components, so choose vertices $\overline v_1,\ldots,\overline v_m$ in pairwise different component of
$\limgraphG-B_{t,\limgraphG}(\overline v)$ each
at distance~$2t$ from~$\overline v\in V(\limgraphG)$ (the vertex corresponding to $v$).

Let~$v_1,\ldots,v_m\in V(G_j)$ be the vertices corresponding to $\overline v_1,\ldots,\overline v_m\in V(\limgraphG)$ in~$G_j$ obtained by the isomorphism of the balls from~$\limgraphG$ to~$G_j$. 
Note that the distance $\dist_{G_j}(v_i,s_{v_i})$ is at most $2t-1$, 
in particular, $v\neq s_{v_i}$.
Then, the distance between~$v$ and a vertex~$s_{v_i}$ is at least~$2t$ (since $v,s_{v_i}\in S,v\neq s_{v_i}$) and at most~$\dist_{G_j}(v,v_i)+\dist_{G_j}(v_i,s_{v_i})\leq 2t+(2t-1)< 4t$.
Therefore, in the limit graph $\limgraphG$ the vertex $s_{\overline v_i}$ (the vertex corresponding to $s_{v_i}$) lies within the same connected component of $\limgraphG-B_{t,\limgraphG}(\overline v)$ as the vertex $\overline v_i$, otherwise a shortest path between
$\overline v_i$ and $s_{\overline v_i}$ must cross a vertex $\overline b\in B_{\limgraphG}(\overline v)$, but then
$\dist_{\limgraphG}(\overline v_i,s_{\overline v_i})= \dist_{\limgraphG}(\overline v_i,\overline b)+\dist_{\limgraphG}(\overline b,s_{\overline v_i})\geq t + t$.
This implies that in the graph $G_j$ the~$s_{v_i}$ are all distinct because they lie in the ball of radius~$4t$.

For each~$i$ choose a shortest path from~$s_{v_i}$ to~$v$. Let~$x_i$ be the last vertex on that path for which~$s_{x_i}\neq v$. Such a vertex exists since~$s_{v_i}\neq v$. Note that the vertices~$s_{x_i}$ are all distinct
since they also lie in the same connected component as $s_{\overline v_i}$ in $\limgraphG-B_{t,\limgraphG}(\overline v)$. Moreover, the branch set corresponding to~$s_{x_i}$ is adjacent to the one corresponding to~$v$ since~$x_i$ is a vertex of the former adjacent to a vertex of the latter.
This shows that each branch set of~$H$ has at least~$m$ neighbors.
\end{proof}

\subsection{Two ends}\label{subsec:two:ends}
Suppose now that~$\limgraphG$ has two ends and let~$k$ be the
connectivity of~$\limgraphG$ between the two ends.

A map~$\varphi \colon V(G_1)\rightarrow V(G_2)$ from one graph~$G_1$ to another~$G_2$ is a \emph{local isomorphism} if for every vertex~$v\in G_1$ the restriction of~$\varphi$ to~$N_{G_1}(v)$ is a bijection between~$N_{G_1}(v)$ and~$N_{G_2}(\varphi(v))$.
A \emph{covering map}~$\cov \colon V(G_1)\rightarrow V(G_2)$ from one graph~$G_1$ to another~$G_2$ is a surjective local isomorphism. 
Note that generally if~$G_2$ is connected, then every local isomorphism to~$G_2$ is surjective.
A \emph{lift} of an automorphism~$\tau \in \Aut(G_2)$ 
 is an automorphism~$\tau^{\uparrow}\in\aut(G_1)$ so that for all~$v\in V(G_1)$ we have~$(v^{\cov})^{\tau} =(v^{\tau^{\uparrow}})^{\cov}$.
Note that if there is a covering map from $G_1$ to $G_2$ for which all automorphisms lift,
then there is a surjective homomorphism from the subgroup of $\aut(G_1)$ consisting of all lifts to the
automorphism group $\aut(G_2)$.

\begin{lem}\label{lem:two:ends:covering}
For sufficiently large~$j$ there is a covering map from~$\limgraphG$ to~$G_j$ for which all automorphisms lift. In particular, 
the group~$\aut(G_j)$ is a factor group of a subgroup of~$\aut(\limgraphG)$.
\end{lem}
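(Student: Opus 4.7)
The strategy is to realize the limit graph $\limgraphG$ as a regular cover of $G_j$ for large $j$, with deck transformation group a cyclic subgroup of $\aut(\limgraphG)$ generated by a power of the translation supplied by \cref{lem:translation}. The covering map will be the quotient projection, and automorphisms of $G_j$ will lift because the deck group will be normalized by every automorphism respecting the local structure.

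The first step will be to apply \cref{lem:translation} to obtain an automorphism $\psi\in\aut(\limgraphG)$ of infinite order and, using \cref{lem:paths:invariant}, to refine $\psi$ so that it leaves invariant a system of $m$ disjoint end-to-end paths and shifts the level set partition from \cref{lem:levelSets} by some constant $s\geq 1$. Next I fix a radius $t$ much larger than $s$, the separator diameter from \cref{lem:boundedDiameter}, and the maximum degree. By \cref{lem:limitGraphs}, for all $j\geq n_t$ every ball of radius $t$ in $G_j$ will be rooted-isomorphic to a ball of radius $t$ in $\limgraphG$ centered at $1$ or $2$; this transports the level set structure locally to $G_j$, and by edge-transitivity and finiteness these local levels close up into a cyclic sequence of length $N_j$ that grows with $j$.

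Then I construct $\cov\colon V(\limgraphG)\to V(G_j)$ by path-lifting. Fixing a rooted isomorphism $\phi_0\colon B_{t,\limgraphG}(1)\to B_{t,G_j}(u_0)$, for each $v\in V(\limgraphG)$ I pick a walk from $1$ to $v$ and transport it step-by-step in $G_j$ via the ball isomorphisms at each intermediate vertex, setting $\cov(v)$ to be the endpoint. The resulting map is surjective and a local isomorphism by construction, but the main obstacle will be well-definedness: the endpoint must not depend on the walk. Here I plan to exploit the strip geometry. Any closed walk at $1$ should decompose into pieces of diameter at most $t$ (which trace to closed walks in $G_j$ by the ball isomorphism) together with powers of a canonical translation corresponding to one full wrap-around of the cyclic level sequence in $G_j$. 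Choosing $n$ so that $\psi^n$ implements exactly this wrap-around will make the translation piece trivial in $G_j$, and $\cov$ will therefore be a well-defined covering whose fibers are the orbits of $\langle\psi^n\rangle$, yielding $G_j\cong\limgraphG/\langle\psi^n\rangle$.

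For the lifting claim, given $\sigma\in\aut(G_j)$, I define $\sigma^\uparrow$ by repeating the path-lifting construction with $\sigma\circ\phi_0$ in place of $\phi_0$. The same decomposition argument gives well-definedness, and $\sigma\circ\cov=\cov\circ\sigma^\uparrow$ holds by construction. The collection of all such lifts forms a subgroup $\Xi\leq\aut(\limgraphG)$ surjecting onto $\aut(G_j)$, with kernel the deck group $\langle\psi^n\rangle$, so $\aut(G_j)$ is a factor group of a subgroup of $\aut(\limgraphG)$. The hardest part will be the closed-walk decomposition, for which I expect to need both the strip/translation structure developed in this section and the fact that the ``non-translational'' loops in $\limgraphG$ have bounded diameter.
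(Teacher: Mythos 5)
Your approach---defining $\cov$ by step-by-step path-transport from a base rooted isomorphism $\phi_0$, then arguing well-definedness via a decomposition of closed walks into small loops plus powers of a canonical translation---is genuinely different from the paper's. The paper instead works with a fundamental domain $I$ between two leftmost separators, defines $\cov$ by shifting each vertex into $I$ via $\psi^{\uparrow}$ and then applying one fixed ball isomorphism $\phi$, and checks well-definedness only on the boundary separators of $I$. Both routes meet the same obstacle, but your proposal leaves it unresolved.

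The gap is the monodromy (or ``twist'') of the wrap-around. ``Choosing $n$ so that $\psi^n$ implements exactly this wrap-around'' only matches the period $N_j$ of the cyclic level structure of $G_j$; it does nothing to control the permutation of the fiber that a full trip around the cylinder induces. Concretely: transport a walk from $1$ to $1^{\psi^n}$ through $\limgraphG$ via the local ball isomorphisms; the image in $G_j$ returns to the correct level but, a priori, to a different vertex of that level---the walk realizes a nontrivial permutation of the minimum separator of size $k$. Your ``canonical translation becomes trivial in $G_j$'' is exactly the statement that this permutation is the identity, which is false for arbitrary $n$ even if $n$ matches $N_j$. The paper handles this with an explicit pigeonhole: for $0\leq s\leq k!$ the maps $\varphi\psi^{s}\varphi^{-1}(\psi^{\uparrow})^{-s}$ are permutations of a $k$-element separator $S_{a^{\uparrow}}$, so two agree, and replacing $\psi$ by the corresponding power makes $\varphi$ commute with $\psi$ (resp.\ $\psi^{\uparrow}$) on $S_{a^{\uparrow}}$. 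Without this (or an equivalent step) neither your covering map nor the lifts $\sigma^{\uparrow}$ are well-defined; ``$t$ sufficiently large'' is not a substitute.

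Two secondary concerns. First, the claimed decomposition of closed walks at $1$ into bounded-diameter loops and translation pieces is a nontrivial assertion that you would still need to prove---it is not automatic from the strip structure alone, and the paper avoids it entirely by confining the consistency check to a single interval. Second, \cref{lem:translation} hands you $\psi$ as an automorphism of $\limgraphG$, but to ``push $\psi^n$ down'' to $G_j$ before the covering exists requires some anchor; the paper sidesteps this by constructing the translation $\psi$ first on $G_j$ (assembling a rotation from up to two ``reflections'' between three far-apart vertices in a common orbit, chosen inside a large ball) and then lifting it to $\psi^{\uparrow}$.
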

\begin{proof}
  Note first that~$\lim_{j\to \infty} \Diam(G_j)=\infty$,
  where~$\Diam(G_j)$ denotes the diameter of~$G_j$.  Recall that~$B_{t,J}(v)$ denotes the ball around a vertex~$v$ of
  radius~$t$ in a graph~$J$. In the proof we will consider the balls~$B_{t,G_j}(v)$ and~$B_{t,\limgraphG}(v)$ for~$t\geq t_0$ and~$j\geq j_0$ sufficiently large. The requirements for~$t_0$ are given in the proof and depend only on~$\limgraphG$. The value~$j_0$ depends on~$t_0$ and the sequence of graphs~$G_j$ and ensures that the balls are isomorphic.
  
  Observe that in~$\limgraphG$, because it has two ends, the vertices on the boundary of a sufficiently large ball can be partitioned into two sides.
  More precisely, we claim the following.
  
  \begin{claim}\label{clam:border:distances}
  
  There is a constant $d_0$ (depending only
        on~$\limgraphG$) such that for all $d_1$  there is a $t_0$ (depending only
      on~$\limgraphG$ and $d_1$) such that 
    for~$t\geq t_0$
    the ball~$B_{t,\limgraphG}(v)$ has the following properties. The vertices on the boundary~$\partial B_{t,\limgraphG}(v) \coloneqq B_{t,\limgraphG}(v)\setminus B_{t-1,\limgraphG}(v) = N(B_{t-1,\limgraphG}(v))$
    can be partitioned
    into two sets~$Y_1,Y_2$ so that for~$i\in \{1,2\}$ we have $\forall u,u'\in Y_i : d_{\overline{G}}(u,u')\leq d_0$,
    and~$\forall u\in Y_1,u'\in Y_2 :d_{\overline{G}}(u,u')>
    d_1$.
    \end{claim}

 \begin{claimproof} This follows from the structure of~$\overline{G}$ (Lemma~\ref{lem:levelSets}) as follows.
 Let $v_0$ be a vertex that is adjacent (or equal) to $v$ and that
 is contained in a (primary) level set. Without loss of generality $v_0\in L_0$.
 Let~$D$ be the maximum distance of vertices within the same (primary or secondary) level set measured in~$\overline{G}$.
 Let~$b\in\{\frac{1}{2},1\}$ be the constant such that~$1/b$ is the number of orbits of~$\limgraphG$ (and $1/b$ is also the distance between two level sets $L_i$ and $L_{i+1}$).
 Then, the level sets $L_{-b(t+1)}$ and $L_{b(t+1)}$ are disjoint from the ball $B_{t,\limgraphG}(v_0)$,
 while the level sets $L_{-bt}$ and $L_{bt}$ intersect the boundary of the ball for all $t\in\NN$ (where $L_{i+\frac{1}{2}}\coloneqq J_{i+\frac{1}{2}},i\in\ZZ$).
 Therefore, if $t_0$ is sufficiently large ($t_0\geq D$), then for all $t\geq t_0$ it holds that $L_{-b(t-D)}$ and $L_{b(t-D)}$ is entirely contained in~$B_{t,\limgraphG}(v_0)$.
  We conclude that $[L_{-b(t-D)},L_{b(t-D)}]\subseteq
  B_{t,\limgraphG}(v_0)\subseteq[L_{-bt},L_{bt}]$ for $t\geq t_0$ and sufficiently large $t_0$.
  Therefore, the boundary
  of the ball can be partitioned into two subsets $Y_1,Y_2$ such that
  $Y_1\subseteq [L_{-bt},L_{-b(t-D)}]$ and
  $Y_2\subseteq [L_{b(t-D)},L_{bt}]$.  Then, for all $t\geq D$ it holds that
  $d_{\limgraphG}(u,u')\leq \frac{1}{b}bD+D$ for $u,u'\in Y_1$ (or
  $u,u'\in Y_2$) and $d_{\limgraphG}(u,u')> \frac{1}{b}2b(t-D)$ for $u\in Y_1$ and
  $u'\in Y_2$. This means that the distance between $Y_1$ and $Y_2$ grows at least linearly in $t$,
  while distances in $Y_1$ and in $Y_2$ are bounded by a constant (not depending on $t$).
  Since the vertices $v$ and $v_0$ (and their boundaries) have distance at most 1,
  the claim holds for sufficiently large $t_0$.
\end{claimproof}
 Since we can assume~$\Diam(G_j)$ is large,
for every~$t\geq t_0$ the claim
also
  holds for all~$G_j,j\geq j_0$ in place of~$\limgraphG$ whenever $j_0$ is sufficiently large. We call these two
  equivalence classes the borders of the ball~$B_{t,G_j}(v)$. Let us call
  one of these borders the \emph{left border} and the other the \emph{right
    border}.

  Let~$D$ be the maximum distance between vertices in a minimum
  separator of~$\limgraphG$ separating the ends. Recall that~$D$ is
  finite by~\cref{lem:boundedDiameter}.
  Let~$A\coloneqq A_{t,G_j}(v)\subseteq B_{t,G_j}(v)$ be the set of vertices in the ball that have distance
  more than~$d+D$ from vertices in the boundary~$\partial B_{t,G_j}(v)$, where~$d$ is the constant appearing in Claim~\ref{clam:border:distances}. This implies that minimum
  separators which contain a vertex from~$A$ and which separate the borders of~$B_{t,G_j}(v)$ are completely contained in~$B_{t-1,G_j}(v)$. Moreover, such separators do not separate any vertices in the same border.
  In particular, they have cardinality~$k$, where~$k$ is the connectivity between the two ends
  of~$\limgraphG$.
  Define~$V^{G_j,t}_{\LeftMostSep}$ to be the set of vertices of~$A$
  contained in a minimum separator separating the two borders
  of~$B_{t,G_j}(v)$.

  (In the following, we intuitively construct a rotation of~$G_j$ that
  does not rotate by too much, but it is cumbersome to define what a
  rotation is.) Choose~$t_0$ sufficiently large (and increase~$j_0$
  adequately) so that for all $t\geq t_0$ the set~$V^{G_j,t}_{\LeftMostSep}$ contains three
  vertices~$v_1,v_2,v_3$ all in the same orbit and pairwise at distance larger than~$2D$.
  For~$i\in \{2,3\}$, let~$\psi_{1,i}$ be an automorphism of~$G_j$
  mapping~$v_1$ to~$v_i$.  Let~$S$ be a separator of~$B_{t,G_j}(v)$ containing a vertex~$x\in A$ and separating the left and right border of the ball. Each such minimum separator~$S$ is adjacent to
  a connected component of~$G_j[B_{t,G_j}(v)]-S$ containing the left border and another connected component containing the right border.
  We call these the left and right components of the
  separator, respectively.  Let~$S_1$ be such a minimum separator containing~$v_1$.
   Let~$e$ be an edge that has one endpoint
   in~$S_1$ and whose other endpoint is in the left component of~$S_1$.
  Define~$\psi$ as follows: If~$e^{\psi_{1,2}}$ is in the left
  component of~$S_1^{\psi_{1,2}}$ (meaning~$\psi_{1,2}$ behaves like a
  rotation) then~$\psi\coloneqq \psi_{1,2}$. Otherwise,
  if~$e^{\psi_{1,3}}$ is in the left component of~$S_1^{\psi_{1,3}}$
  (meaning~$\psi_{1,3}$ behaves like a rotation)
  then~$\psi\coloneqq \psi_{1,3}$. Otherwise,
  set~$\psi \coloneqq   \psi_{1,3}^{-1}\psi_{1,2}$ (functions
  applied from left to right).
  Note that overall~$\psi$ maps edges reaching into the left component
  to edges reaching into the left component (i.e., it behaves like a
  rotation).  It also maps some vertex~$a\in \{v_1,v_2,v_3\}$ to some
  distinct vertex~$a^{\psi}\in \{v_1,v_2,v_3\}$ at distance larger
  than~$2D$. In particular, minimum separators containing~$a$ are
  disjoint from minimum separators containing~$a^\psi$.

  Choose a vertex~$v^{\uparrow}$ in~$\limgraphG$ so that~$\limgraphG[B_{t,\limgraphG}(v^{\uparrow})]$
  and~$G_j[B_{t,G_j}(v)]$ are isomorphic as rooted graphs, and
  let~$\varphi$ be an
  isomorphism from~$B_{t,\limgraphG}(v^{\uparrow})$ to~$B_{t,G_j}(v)$
  mapping~$v^{\uparrow}$ to~$v$. 
  Recall that~$\limgraphG -B_{t,\limgraphG}(v^{\uparrow})$ has two infinite connected components. To ensure our notions of left and right are consistent in~$\limgraphG$ and~$G_j$, we define the left component in~$\limgraphG -B_{t,\limgraphG}(v^{\uparrow})$ as the component whose neighbors are mapped to the left boundary of~$B_{t,G_j}(v)$ by~$\varphi$ and similarly for the right. 
  For the vertex~$a$ of the previous paragraph, define~$a^{\uparrow}$
  and~$(a^\psi)^{\uparrow}$ so that~$(a^{\uparrow})^\varphi=a$
  and~$((a^\psi)^{\uparrow})^\varphi=a^\psi$. In~$\limgraphG$ find an
  automorphism~$\psi^{\uparrow}$ of~$\limgraphG$ which maps~$a^{\uparrow}$
  to~$(a^\psi)^{\uparrow}$ does not interchange the
    ends.
  If such an automorphism does not exist, we use five (distinct) points
    $a^{\uparrow},(a^\psi)^{\uparrow},\ldots,(a^{\psi^4})^{\uparrow}$. Three of these must be in the same orbit, say~$a_1^{\uparrow},a_2^{\uparrow},a_3^{\uparrow}$. If still no automorphism between the points exists that does not interchange the ends, we again assemble two reflections to a rotation (finding isomorphisms~$\psi_{i,j}^{\uparrow}$ mapping~$a_i^{\uparrow}$ to~$a_j^{\uparrow}$ and considering~$(\psi_{1,3}^{\uparrow})^{-1}\psi_{1,2}^{\uparrow}$).

  Possibly renaming various points and replacing isomorphism by up to their fourth power, we can now
  assume that
  \begin{itemize}
  \item $a^{\uparrow}$ is mapped to~$(a^\psi)^{\uparrow}$ by an isomorphism~$\psi^{\uparrow}$ that a does not interchange the ends and
  \item $\varphi$ maps $a^{\uparrow}$ to~$a$ and~$(a^\psi)^{\uparrow}$ to~$(a^\psi)$.
  \end{itemize}

Recall that for each~$v\in V(\limgraphG)$ contained in a minimum separator separating the two ends we denote by~$S_{v}$ the leftmost minimum separator containing~$v$. We give an analogous definition for~$G_j$.
  For each vertex~$a'$ of~$G_j$
  contained in~$A$ and contained in some minimum separator separating the two borders of~$B_{t,G_j}(v)$,
  we can define a
  \emph{leftmost minimum separator~$S_{a'}$} containing~$a'$ as the separator
  separating the borders of~$B_{t,G_j}(v)$
  for which the left component has
  minimum order.
  Since leftmost separators are unique in~$\limgraphG$, the
  separator~$S_a$ is also unique in~$G_j[B_{t,G_j}(v)]$.

  \[
    \begin{tikzcd}
      S_{a^{\uparrow}} \arrow{r}{\psi^{\uparrow}} \arrow[swap]{d}{\phi} & S_{(a^{\uparrow})^{\psi^{\uparrow}}} \arrow{d}{\phi}  \\%
      S_{a} \arrow{r}{\psi}& S_{a^\psi}
    \end{tikzcd}
  \]

  Note that~$(S_{a^{\uparrow}})^{\phi}= S_a$ and
  that~$(S_{(a^\psi)^{\uparrow}})^\phi= S_{a^\psi}$. Also note
  that~$(S_{a^{\uparrow}})^{\psi^{\uparrow}} = S_{(a^\psi)^{\uparrow}}= S_{(a^{\uparrow})^{\psi^{\uparrow}}}$
  and~$(S_{a})^\psi = S_{{a^\psi}}$.
  This means that when
  mapping~$S_{a^{\uparrow}}$ as a set, the functions~$\phi$ and
  $\psi$ (respectively~$\psi^{\uparrow}$, see the diagram)
  commute, i.e.,~$(S_{a^{\uparrow}})^{\phi\psi}= (S_{a^{\uparrow}})^{\psi^{\uparrow}\phi}$. However, we want them to commute when applied to the points
  in~$S_{a^{\uparrow}}$ separately.

  \begin{claim}\label{claim:shifting:a:and:z}
  If~$t_0$ is sufficiently large
  (and~$j_0$ increased adequately), then there are
  distinct~${\ell},{\ell}'\in\mathbb{N}$
  such that~$a^{\psi^{\ell}},a^{\psi^{{\ell}'}}\in A$
  and such that for all~$z\in (S_{a^{\uparrow}})^{(\psi^{\uparrow})^{\ell}}$ it holds
  that~$z^{\phi\psi^{{\ell}'-{\ell}}}=
   z^{{(\psi^{\uparrow})}^{{\ell}'-{\ell}}\phi}\in(S_{a^{\uparrow}})^{(\psi^{\uparrow})^{\ell'}}$.
  \end{claim}
  
  \begin{claimproof}
  Choose~$t_0$ sufficiently large
  so that for all~$0\leq s\leq k!$ we
  have~$(a^{\uparrow})^{\psi^{s}}\in A$, where~$k$ is the
  connectivity between the two ends of~$\limgraphG$, and thus the size of~$S_{a^{\uparrow}}$. For all~$s$ in
  this range the
  map~$\varphi \psi^{s} \varphi^{-1} {\psi^{\uparrow}}^{-s} $ defines a
  permutation of~$S_{a^{\uparrow}}$. For some pair~${\ell}\neq{\ell}'$
  these permutations agree, and these parameters show the claim.
  \end{claimproof}

  Let us replace $a$ by $a^{\psi^{\ell}}$
  and~$\psi$ by $\psi^{{\ell'}-{\ell}}$
  (and thus $a^\psi$ becomes $a^{\psi^{\ell'}}$). So, we are back to the original notation with the
  additional property that when applied to the points
  of~$S_{a^{\uparrow}}$ the functions~$\phi$ and $\psi$
  (respectively~$\psi^{\uparrow}$) commute.

  \emph{(Fundamental domain)} Let~$I= [S_{a^{\uparrow}}, S_{(a^{\uparrow})^{\psi^{\uparrow}}}]$
  be the interval between the two separators~$S_{a^{\uparrow}}$
  and~$S_{(a^{\uparrow})^{\psi^{\uparrow}}}$, i.e., the set of
  all vertices that are not in an infinite component
  of~$\limgraphG-(S_{a^{\uparrow}}\cup
  S_{(a^{\uparrow})^{\psi^{\uparrow}}})$.
Recall that~$\psi^{\uparrow}$ does not interchange the ends and that~$a^{\uparrow}$ and ~$(a^{\uparrow})^{\psi^{\uparrow}}$ have distance larger
than~$2D$. This implies that~$\psi^{\uparrow}$ does not fix the level sets of~$\limgraphG$. Thus, every vertex~$x$ in one of the two connected components of~$\limgraphG - I$ is mapped to some vertex in the other connected component by some (possibly negative) power of~${\psi^{\uparrow}}$. But~${\psi^{\uparrow}}$ (without taking powers) does not map vertices from one of the components to the other, so for some power of~${\psi^{\uparrow}}$ the vertex~$x$ is mapped to~$I$.  
Therefore,~$\bigcup_{i\in \mathbb{Z}} 
I^{{(\psi^{\uparrow})}^i} = V(\limgraphG)$. (The set~$I\setminus S_{(a^{\uparrow})^{\psi^{\uparrow}}}$ actually contains exactly one vertex of each orbit of~${\psi^{\uparrow}}$ making it a fundamental domain, but we will not need this fact.)

  \emph{(The covering map)} Define a
  function~$\cov\colon V(\limgraphG)\rightarrow V(G_j)$ as follows:
  for each~$w\in V(\limgraphG)$ find an~$s\in\mathbb{Z}$ such
  that~$w^{{\psi^{\uparrow}}^s}\in I$. Then,
  set~$w^{\cov} \coloneqq w^{{\psi^{\uparrow}}^{s} \phi\psi^{-s}}$.
  For vertices
  that can be mapped to~$S_{a^{\uparrow}}\cup S_{(a^{\uparrow})^{\psi^{\uparrow}}}$ the
  integer~$s$ might not be unique, but 
  Claim~\ref{claim:shifting:a:and:z} precisely says that this
  definition is well-defined since the functions~$\phi$ and $\psi$
  commute on vertices from~$S_{a^{\uparrow}}$.
  Since~$\cov$ is injective on~$I$ and both~$S_{a}$
  and~$S_{a^\psi}$ are separators, it follows that~$\cov$ is a
  covering map, i.e., a surjection that is a local isomorphism. For
  the surjectivity we use that~$G_j$ is connected. (The
  functions~$\cov$ and~$\varphi$ agree on $I$, but they may disagree outside of~$I$.)
  Let~$B^{\uparrow}\subseteq V(\limgraphG)$ with~$I\subseteq
  B^{\uparrow}$ be a set of vertices so that the restriction
  map~$\cov|_{{B^{\uparrow}}} \colon B^{\uparrow}\rightarrow V(G_j)$
  induces an isomorphism from~$\limgraphG[B^{\uparrow}]$
  to~$G_j[B_{t,G_j}(v)]$ (i.e.,~$\cov(B^{\uparrow}) =
  B_{t,G_j}(v)$).

  \emph{(All automorphisms lift)} We argue that all
  automorphisms of~$G_j$ lift to automorphisms of~$\limgraphG$. By
  construction~$w^{{\psi^{\uparrow}}\cov}= w^{\cov\psi}$ for all $w\in V(\limgraphG)$, and thus
  the automorphism~$\psi$ lifts to~$\psi^{\uparrow}$. 
  Since~$\bigcup_{i\in \mathbb{Z}} I^{{(\psi^{\uparrow})}^i} =
  V(\overline{G})$ implies~$\bigcup_{i\in \mathbb{Z}} I^{{\psi}^i} =
    V(G_j)$, it suffices to lift automorphisms~$\tau$ that map some vertex of~$I^{\varphi}$ to~$I^{\varphi}$.
  Since~$I^{\varphi}$ has bounded diameter,
  by possibly increasing~$t_0$ we can ensure that for every such map we have~$I^{\varphi \tau}\subseteq B_{t,G_j}(v)$.
   
  Let~$\tau$ be such an automorphism. Consider first the case that
  some edge that is incident with~$a$ and with an endpoint in the left
  component of~$S_a$ is mapped to an edge with an endpoint in the
  left component of~$(S_a)^\tau$ (i.e.,~$\tau$ behaves like a
  rotation).

  In the following we want to argue that we can bound the distance between~$I^{\varphi{\psi}^{i}\tau}$ and~$I^{\varphi{\psi}^i}$ independent of~$i$ and~$\tau$. We do this with the following claim.
  
  \begin{claim}\label{clam:interval:distance:constant}
    For some constant~$b$ independent of~$\tau$ for all~$i\in \mathbb{Z}$ we have~$I^{\varphi{\psi}^{i}\tau}\subseteq \bigcup_{\ell\in \{i-b,\ldots,i+b\}} I^{\varphi{\psi}^\ell}$. 
      \end{claim}
  
  \begin{claimproof}

  For a set of vertices~$M$ contained in a
  ball~$B_{t,G_j}(m)$ with~$m\in M$ we say a vertex~$u\in
  B_{t,G_j}(m)$ is \emph{surrounded}  by~$M$ if~$u \notin M$ and~$u$ is not in a connected component
  of~$B_{t,G_j}(m)\setminus M$ that contains a vertex in a
  border of~$B_{t,G_j}(m)$. Note that if~$t$ is sufficiently large
  (and~$j$ sufficiently large in dependence of that) then whether a
  vertex is surrounded by~$M$ is independent of the choice of
  the center~$m\in M$ of the ball. In fact, a lower bound
  which ensures that~$t$ is sufficiently large can be given in terms
  of the diameter of~$M$ measured in~$G_j$.

    Let~$b'$ be the maximum number of vertices surrounded by~$M_1\cup M_2$ for two non-trivially intersecting sets~$M_1$ and~$M_2$ each with a diameter at most that of~$I$ (where the diameter is measured in~$G_j$). We set~$b=2 b'+|I|+2$.

  We argue that the number of vertices surrounded by~$I^{\varphi{\psi}^{i}\tau}\cup I^{\varphi{\psi}^{i+b}}$ is a constant~$c> b'$ independent of~$i$. 
  We argue this statement by induction on~$|i|$.
   \begin{itemize}
 \item For the induction base with~$i=0$, recall that~$I^{\varphi\tau}$ and~$I^{\varphi}$ intersect nontrivially. There are at least~$b-1$ vertices surrounded by~$I^{\varphi}\cup I^{\varphi{\psi}^{b}}$.
 If a vertex surrounded by~$I^{\varphi}\cup I^{\varphi{\psi}^{b}}$ is not surrounded by~$I^{\varphi\tau} \cup I^{\varphi{\psi}^{b}}$ then it must be in~$I^{\varphi\tau}$ or it must be surrounded by~$I^{\varphi\tau}\cup I^{\varphi}$.
 This means at least~$b-1-|I|-b'> b'$ vertices
 are surrounded by~$I^{\varphi\tau}\cup I^{\varphi{\psi}^{b}}$.

  \item For the induction step assume first~$i>0$ and
  suppose~$c$ vertices are surrounded by $I^{\varphi{\psi}^{i}\tau}\cup  I^{\varphi{\psi}^{i+b}}$.
  Let~$e$ be the number of vertices surrounded by $I^{\varphi{\psi}^{i}}\cup   I^{\varphi{\psi}^{i+3}}$ minus the number of vertices surrounded  by $I^{\varphi{\psi}^{i}}\cup I^{\varphi{\psi}^{i+2}}$. This number is independent of~$i$. (Here we consider~$I^{\varphi{\psi}^{i+2}}$ instead of~$I^{\varphi{\psi}^{i+1}}$ because~$I^{\varphi{\psi}^{i}}$ and~$I^{\varphi{\psi}^{i+2}}$  are disjoint.)
  
   Then there are~$c+e $ vertices surrounded by $I^{\varphi{\psi}^{i}\tau}\cup   I^{\varphi{\psi}^{i+b+1}}$.
  
  Since~$\tau$ is an automorphism, $e$ is also exactly the number of vertices surrounded by~$I^{\varphi{\psi}^{i}\tau}\cup I^{\varphi{\psi}^{i+3}\tau}$ minus the number of vertices surrounded by~$I^{\varphi{\psi}^{i+1}\tau}\cup I^{\varphi{\psi}^{i+3}\tau}$.
  
  It follows that exactly~$c+e-e=c$ vertices are surrounded  by~$I^{\varphi{\psi}^{i+1}\tau}\cup I^{\varphi{\psi}^{i+b+1}}$.

  The inductive  argument for~$i<0$ is similar.
  \end{itemize}
  
  We conclude that the number of vertices surrounded by~$I^{\varphi{\psi}^{i}\tau}\cup I^{\varphi{\psi}^{i+b}}$ is a constant~$c> b'$ independent of~$i$. 
  Note that $c> b'$ implies that~$I^{\varphi{\psi}^{i}\tau}$ and
  $I^{\varphi{\psi}^{i+b}}$ do not intersect.
 
  Symmetrically, we can argue that the number of vertices surrounded by~$I^{\varphi{\psi}^{i}\tau}\cup I^{\varphi{\psi}^{i-b}}$ is constant.

  Since the number of vertices surrounded by~$I^{\varphi{\psi}^{i+1}\tau} \cup I^{\varphi{\psi}^{i+b+1}}$ and the number of vertices surrounded by~$I^{\varphi{\psi}^{i}\tau} \cup I^{\varphi{\psi}^{i-b}}$ is constant, we conclude, again by induction on~$|i|$, that~$I^{\varphi{\psi}^{i}\tau}\subseteq I^{\varphi{\psi}^{i-b}}\cup I^{\varphi{\psi}^{i-b+1}}\cup\cdots\cup I^{\varphi{\psi}^{i+b}}$.
  \end{claimproof}

  We define~$\tau^{\uparrow}$ as follows. For~$w\in V(\limgraphG)$ we
  find an~$s\in \mathbb{Z}$ so that~$w^{{(\psi^{\uparrow})}^s}\in
  I$. Then~$w^{{\psi^{\uparrow}}^s \cov {\psi}^{-s} \tau {\psi}^{s}} \in \bigcup_{\ell\in \{-b,\ldots,b\}} I^{\varphi{\psi}^{\ell}}\subseteq B_{t,G_j}(v)$ if we choose~$t_0$ sufficiently large.

  Define~$w^{\tau^{\uparrow}} \coloneqq
     w^{{\psi^{\uparrow}}^s \cov {\psi}^{-s} \tau {\psi}^{s} (\cov|_{{B^{\uparrow}}})^{-1}
       {\psi^{\uparrow}}^{-s}}$.
     Then,~$\tau^{\uparrow}$ is a
  lift of~$\tau$. Indeed, it is well-defined, despite possible choices
  for~$s$, again due to Claim~\ref{claim:shifting:a:and:z}.  It is an automorphism since all
  involved maps are isomorphisms when restricted to balls of suitable radius and~$\overline{G}$ is a connected strip.
  (Endomorphisms in a connected strip that are isomorphisms when restricted to sufficiently large
  balls are automorphisms.)

  Next, let~$\tau$ be an automorphism such that for some edge (and thus
  every edge) incident with~$a$ that has an endpoint in the left
  component of~$S_a$ its image has an endpoint in the right
  component of~$(S_a)^\tau$ (i.e.,~$\tau$ behaves like a
  reflection). Note that for~$t_0$ sufficiently large and $t\geq t_0$, the borders of the
  two balls~$B_{t,G_j}(v)$ and~$B_{t,G_j}(v')$ for adjacent
  vertices~$v$
  and~$v'$ are so that for each border its vertices are close (of
  distance at most~1) to exactly one border of the other ball. We can
  thus consistently label the borders with left and right for all
  balls of the graph so that left borders of adjacent vertices are
  adjacent (and similar for right borders).

  Then, for two balls of adjacent vertices the automorphism must
  interchange the borders of both of them or of neither.  Thus, the
  automorphism~$\tau$ interchanges the borders of all balls.

We define~$\tau^{\uparrow}$ as follows. For~$w\in V(\limgraphG)$
we find an~$s\in \mathbb{Z}$ so that~$w^{(\psi^{\uparrow})^s}\in I$. Then~$ w^{{\psi^{\uparrow}}^s \cov {\psi}^{-s} \tau {\psi}^{-s}}\in  \bigcup_{\ell\in \{-b,\ldots,b\}} I^{{\varphi(\psi^{\uparrow})}^{\ell}}\subseteq B_{t,G_j}(v)$ with arguments similar to the previous case.
 We also define~$w^{\tau^{\uparrow}} =   w^{{\psi^{\uparrow}}^s\cov{\psi}^{-s}\tau{\psi}^{-s}(\cov|_{{B^{\uparrow}}})^{-1}{\psi^{\uparrow}}^{s}}$. Then, with same arguments as before~$\tau^{\uparrow}$ is well-defined, a homomorphism, and locally an isomorphism. It is thus an automorphism, and in particular it is a lift of~$\tau$.

Overall this means that~$\limgraphG$ is a covering of~$G_j$ with
covering map~$\cov$ and all automorphisms lift. 
\end{proof}

\subsection{Combination of the results}\label{subsec:comb:results}

We finally assemble our considerations for varying number of ends to prove Theorem~\ref{thm:edge:transitive:twin:free:comp:fac:bound}.
\begin{proof}[Proof of Theorem~\ref{thm:edge:transitive:twin:free:comp:fac:bound}]
Assume for the sake of contradiction that there is some $h$ so that there is no~$f(h)$ such that 
the automorphism group of every connected, $K_{h+1}$-minor-free, edge-transitive, twin-free, finite graph is in~$\Gamma_{f(h)}$.
As argued before, there is an infinite convergent subsequence~$G_1,G_2,G_3,\dots$
for which there is no such~$f(h)$,
and this subsequence has a corresponding infinite limit graph $\limgraphG$. 
If $\limgraphG$ has one end, then Lemma~\ref{lem:char0} and the discussion thereafter says that~$\Aut(G_j)$ is solvable for sufficiently large~$j$, and thus in~$\Gamma_{f(h)}$ for~$f(h)=1$.
If it has infinitely many ends, then Theorem~\ref{thm:inf:ends:large:Hadwiger} says that the Hadwiger number of the graphs in the subsequence is not bounded. 
If it has two ends, then Lemma~\ref{lem:two:ends:covering} and Lemma~\ref{lem:comp:factors:of:two:end:vertex:trans} show that we can choose~$f(h)= h+1$ for sufficiently large~$j$.
\end{proof}

\section{Graphs with multiple edge orbits}\label{sec:multiple:edge:orbits}

We now turn to graphs that are not edge-transitive.

Recall that a minor $H$ is called invariant
if there is a minor model $\varphi\colon V(H)\rightarrow 2^{V(G)}$
so that~$V(H)^\phi$ and $E(H)^\phi=\{v^\phi w^\phi\mid vw\in E(H)\}$
are both invariant under automorphisms of $G$.
A vertex-colored minor is a pair $H_{\chi'}=(H,\chi')$ consisting of a vertex and a vertex coloring of its vertices.
A vertex-colored minor $H_{\chi'}=(H,{\chi'})$ is invariant
if additionally $({\chi'})^\phi$ is \emph{invariant} under automorphisms of $G$, that is, branch sets of a particular color under~$\chi$ must be mapped to branch sets of the same color. If~$G$ is also vertex-colored the requirement only needs to hold for automorphisms preserving the colors of~$G$.

\begin{lem}\label{lem:aut:invar:minor}
If~$H_{\chi'}=(H,\chi')$ is an $\aut(G_\chi)$-invariant vertex-colored minor of some vertex-colored graph~$G_\chi=(G,\chi)$, then there is a homomorphism from $\aut(G_\chi)$ to~$\aut(H_{\chi'})$ whose kernel is a subgroup of the direct product of 
automorphism groups of the vertex-colored graphs induced by the branch sets.
\end{lem}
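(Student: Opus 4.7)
The plan is to construct the homomorphism $\Phi\colon\aut(G_\chi)\to\aut(H_\chi)$ directly from the minor model and then analyze its kernel. Fix an invariant minor model $\varphi\colon V(H)\to 2^{V(G)}$, write $B_v\coloneqq v^\varphi$ for the branch set of $v$, and let $\CB\coloneqq V(H)^\varphi=\{B_v\mid v\in V(H)\}$. Since $\CB$ is $\aut(G_\chi)$-invariant and the $B_v$ partition parts of $V(G)$, every automorphism $\sigma\in\aut(G_\chi)$ permutes $\CB$, inducing (via the bijection $\varphi$) a permutation $\tau_\sigma$ of $V(H)$ characterized by $B_{v^{\tau_\sigma}}=(B_v)^\sigma$.

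The first step is to verify that $\tau_\sigma$ is an automorphism of $H_\chi$ and that $\sigma\mapsto\tau_\sigma$ is a group homomorphism. For the edge preservation: if $vw\in E(H)$, then $B_vB_w\in E(H)^\varphi$, and applying $\sigma$ to this pair yields $B_{v^{\tau_\sigma}}B_{w^{\tau_\sigma}}\in E(H)^\varphi$ by invariance of $E(H)^\varphi$, hence $v^{\tau_\sigma}w^{\tau_\sigma}\in E(H)$. For color preservation: invariance of the coloring means that whenever $\sigma$ maps $B_v$ to $B_w$ we have $\chi(v)=\chi(w)$, so $\chi(v^{\tau_\sigma})=\chi(v)$. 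Functoriality $\tau_{\sigma\sigma'}=\tau_\sigma\tau_{\sigma'}$ is immediate from the definition via set-wise action on $\CB$.

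The second step is to identify the kernel. An automorphism $\sigma$ lies in $\ker(\Phi)$ exactly when $\tau_\sigma=\id$, which is equivalent to $(B_v)^\sigma=B_v$ for every $v\in V(H)$, i.e., $\sigma$ preserves every branch set setwise. Restricting such a $\sigma$ to $B_v$ yields a bijection of $B_v$ that preserves edges (because $\sigma$ preserves edges of $G$) and colors (because $\sigma\in\aut(G_\chi)$), so $\sigma|_{B_v}\in\aut(G_\chi[B_v])$. The restriction map
\[
  \ker(\Phi)\longrightarrow\prod_{v\in V(H)}\aut(G_\chi[B_v]),\qquad \sigma\mapsto(\sigma|_{B_v})_{v\in V(H)},
\]
is a group homomorphism, and it is injective because the $B_v$ cover the support of $\sigma$ (vertices outside $\bigcup_v B_v$ play no role since only elements of $\ker(\Phi)$ not fixing such a vertex would still be detected; more cleanly, if $\sigma|_{B_v}=\id$ for all $v$, then $\sigma$ is the identity on $\bigcup_v B_v=V(G)$, using that the branch sets partition $V(G)$). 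This exhibits $\ker(\Phi)$ as a subgroup of the stated direct product, finishing the proof.

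No single step looks to be a real obstacle; the only thing to be careful about is the bookkeeping in the definition of $\tau_\sigma$ (making sure the branch sets really partition $V(G)$, or otherwise restricting to the relevant union so that the injectivity of the kernel map is legitimate) and checking that the invariance of $\chi^\varphi$ indeed means exactly that $\chi$ factors through the orbits of $\CB$, which is what the color-preservation of $\tau_\sigma$ requires.
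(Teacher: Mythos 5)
Your proposal is correct and follows essentially the same approach as the paper: defining the induced action on $V(H)$ by conjugating through the minor model bijection (the paper writes $v^\gamma\coloneqq v^{\phi\gamma\phi^{-1}}$), observing that invariance of $V(H)^\phi$, $E(H)^\phi$ and $\chi^\phi$ makes this a homomorphism to $\aut(H_\chi)$, and then noting that the kernel (automorphisms fixing every branch set setwise) restricts into $\bigtimes_{v\in V(H)}\aut(G_\chi[\phi(v)])$. Your caveat about the branch sets covering $V(G)$ is a reasonable bookkeeping observation (needed for the restriction map to be injective), and it holds in all the paper's applications of the lemma.
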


\begin{proof}
Let $\phi\colon V(H)\to 2^{V(G)}$ be an $\aut(G_\chi)$-invariant minor model of $H$ in $G$.
Since $V(H)^\phi$, $E(H)^\phi$ and $(\chi')^\phi$ are invariant under $\aut(G_\chi)$, the group $\aut(G_\chi)$ acts on $V(H)$ via $v^\gamma\coloneqq v^{\phi\gamma\phi^{-1}}$ for $v\in V(H)$,
preserving edges and colors.
This leads to a homomorphism $g\colon\aut(G_\chi)\to\aut(H_{\chi'})$
where the kernel of $g$ is a subgroup of the direct product $\bigtimes_{v\in V(H)}\aut(G_\chi[\phi(v)])$.
\end{proof}

\begin{lem}\label{lem:smallest:orbit:is:gamma:d}
There is a function~$f$ such that if~$G_\chi=(G,\chi)$ is a connected, $K_{h+1}$-minor-free
vertex-colored graph,
then for every vertex orbit~$O$ of minimum cardinality the subgroup induced by~$\aut(G_\chi)$ on~$O$ is a~$\Gamma_{f(h)}$-group.
\end{lem}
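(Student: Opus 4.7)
The plan is to reduce the statement to Theorem~\ref{thm:edge:transitive:twin:free:comp:fac:bound} by constructing an $\aut(G_\chi)$-invariant minor $H_\chi$ of $G_\chi$ whose vertex set is $O$ and which, after refining the vertex coloring and restricting to a carefully chosen subset of its edges, is edge-transitive, connected, and twin-free. Lemma~\ref{lem:aut:invar:minor} then yields a homomorphism $\aut(G_\chi)\to \aut(H_\chi)$ whose image on the vertex set $V(H_\chi)=O$ is precisely the induced group $\Delta_O\coloneqq\aut(G_\chi)|_O$. Since the class $\Gamma_{f(h)}$ is closed under taking subgroups (every non-abelian composition factor of a subgroup of $\Delta$ is a subquotient of a non-abelian composition factor of $\Delta$, and non-abelian simple subquotients of subgroups of $S_{f(h)}$ are again composition factors of subgroups of $S_{f(h)}$), an application of (a vertex-colored version of) Theorem~\ref{thm:edge:transitive:twin:free:comp:fac:bound} to $H_\chi$ will put $\Delta_O$ in $\Gamma_{f(h)}$.

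For the construction, I would first build an $\aut(G_\chi)$-invariant partition $\CB=\{B_v\}_{v\in O}$ of $V(G)$ into connected branch sets, each containing exactly one vertex of $O$. The natural approach is a breadth-first search from $O$ in $G$ that assigns each vertex $w\notin O$ to a closest vertex of $O$. Performing this assignment invariantly requires breaking ties when $w$ has several closest $O$-vertices, and this is where the minimum-cardinality hypothesis on $O$ is essential: for any orbit $O'\ne O$, the inequality $|O|\leq|O'|$ combined with the biregularity of the bipartite subgraph between $O$ and $O'$ (and the degree bound of Lemma~\ref{lem:degree:bound}) permits a balanced redistribution of each $O'$ over the branch sets, for instance via iterated applications of Hall's marriage theorem to the local bipartite structure. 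The quotient $H_0\coloneqq G/\CB$ is then an $\aut(G_\chi)$-invariant minor of $G$ on vertex set $O$, and hence $K_{h+1}$-minor-free, with a natural vertex coloring inherited from the colored isomorphism types of the branch sets.

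Next I would modify $H_0$ to $H_\chi$ so that the hypotheses of Theorem~\ref{thm:edge:transitive:twin:free:comp:fac:bound} are met. Connectedness is inherited from that of $G$; edge-transitivity is enforced by retaining a single $\aut(G_\chi)$-orbit of edges of $H_0$ that still spans a connected subgraph; and twin-freeness is achieved by refining the vertex coloring with local invariants, such as the colored isomorphism type of a fixed-radius ball in $G$ around the branch-set representative. Applying the theorem to $H_\chi$ yields $\aut(H_\chi)\in\Gamma_{f(h)}$, from which $\Delta_O\in\Gamma_{f(h)}$ follows by the closure of $\Gamma_{f(h)}$ under subgroups.

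The principal obstacle is the invariant tie-breaking step required to produce the branch-set partition, which is where the minimum-orbit hypothesis really earns its keep: without it one can easily exhibit counterexamples such as the star graph, in which the leaves form an orbit whose naive induced graph on this orbit would be a complete graph of unbounded Hadwiger number. A secondary technical difficulty is to control the number of edge orbits one may need to combine in order to keep $H_\chi$ connected, since $\Gamma_{f(h)}$ does not tolerate wreath-product factors with arbitrarily large symmetric groups on top; here one may need to argue directly that a single orbit of ``locally closest'' pairs already spans a connected graph on $O$, using the minimum-orbit hypothesis once more to rule out splittings that would otherwise introduce unbounded top groups.
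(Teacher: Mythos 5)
Your central step --- building an $\aut(G_\chi)$-invariant partition of $V(G)$ into connected branch sets $\{B_v\}_{v\in O}$, one per vertex of the minimum orbit --- is not possible in general, and the minimum-cardinality hypothesis does not supply the invariant tie-breaking you need. Consider $G=K_{2,3}$, which is connected, edge-transitive, and $K_4$-minor-free (hence $K_{h+1}$-minor-free for $h\geq 3$). The minimum orbit $O$ is the side of size $2$, and every vertex $w$ on the other side has both vertices of $O$ as its only neighbors; the stabilizer of $w$ in $\aut(G)$ acts as $S_2$ on $O$, so no $\aut(G)$-invariant choice of a branch set for $w$ exists. Hall's marriage theorem produces \emph{a} matching but not an invariant one; the obstruction is one of stabilizers, not of cardinalities. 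Your secondary claim --- that a single edge $\aut(G_\chi)$-orbit of the quotient already spans a connected subgraph on $O$ --- also fails: in the vertex-transitive case $O=V(G)$, the quotient is $G$ itself, and for example $C_n\square C_m$ with $n\neq m$ is connected, vertex-transitive, of bounded Hadwiger number, and has two edge orbits, neither of which induces a connected spanning subgraph.

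The paper takes a genuinely different route, inducting on $|V(G)|$ together with the number of color classes, and in particular never attempts to build a single invariant minor on $O$. In the edge-transitive case it shows the twin-class sizes are at most $h$, deletes twins to obtain an uncolored, connected, twin-free, edge-transitive graph $G''$, applies Theorem~\ref{thm:edge:transitive:twin:free:comp:fac:bound} to $G''$, and then recovers the action on $O$ using closure of $\Gamma_{f(h)}$ under subgroups and under wreath products with base groups $S_c$, $c\leq h$. When $G_\chi$ has several edge orbits, it picks a single edge orbit $\hat E$, contracts the connected components of $(V(G),\hat E)$ to obtain a smaller vertex-colored $\aut(G_\chi)$-invariant minor $G^*_\chi$, and combines the inductive hypothesis on $G^*_\chi$ and on the branch sets via Lemma~\ref{lem:aut:invar:minor}. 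This recursion through one edge orbit at a time is precisely what replaces the nonexistent single connected spanning edge orbit your proposal requires, and it avoids ever having to make an invariant choice of branch sets in the delicate bipartite case.
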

\begin{proof}
\icase{1. $G_\chi$ is edge-transitive}

To apply Theorem~\ref{thm:edge:transitive:twin:free:comp:fac:bound}, we need to get rid of twins.
We call the set of twins of a vertex the \emph{twin class} of this vertex.
Let $O_1$ be a minimum cardinality vertex orbit.

\begin{claim}
The size $c$ of the twin class of a vertex in $O_1$ is at most $h$.
\end{claim}
\begin{claimproof}
If $G_\chi$
is vertex-transitive and has twin classes of size $c$,
then $c=|V(G)|\leq h$ or $G$ has the complete bipartite graph $K_{c,c}$ as a minor.
In this case, the complete graph $K_c$ is a minor (a matching is a minor model of $K_c$ in
$K_{c,c}$), and thus $c\leq h$. Assume that $G_\chi$ has exactly two orbits $O_1,O_2$ where
$|O_1|\leq|O_2|$. Let $G'$ be the graph obtained from $G$ by
removing all but one twin in each twin class in $O_1$ and let
$O_1',O_2$ be its orbits.
Note that $|O_1'|=\frac{1}{c}|O_1|\leq\frac{1}{c}|O_2|$.
Since $G'$ is biregular and every vertex in $O_2$ has a neighbor in $O_1$, the degree of each vertex in $O_1'$ is at least
$c$.
Therefore, the graph $G$ has a $K_{c,c}$-minor implying that $c\leq h$.
\end{claimproof}

Let $G''$ be the (uncolored) graph that is obtained from $G$ by removing all but one twin in each twin class (of $O_1$ as well as $O_2$).
Note that $G''$ is connected, $K_{h+1}$-minor-free, twin-free, edge-transitive and uncolored.
This allows us to
apply \cref{thm:edge:transitive:twin:free:comp:fac:bound} to $G''$. Thus, it holds that~$\Aut(G'')\in \Gamma_{f(h)}$.
Since the class $\Gamma_{f(h)}$ is closed under subgroups
and wreath products with symmetric (base) groups $S_c\in\Gamma_{f(h)}$ where~$c\leq h\leq f(h)$, the result also follows for the vertex-colored graph $G_\chi$
with twin classes in $O_1$ of size at most $c$. (Note that~$h\leq f(h)$ due to the graph~$K_h$.)

\icase{2. $G_\chi$ is not edge-transitive but vertex-transitive}

Let $\hat E\subseteq E(G)$ be an edge $\aut(G_\chi)$-orbit and consider the graph $\hat G\coloneqq(V(G),\hat E)$ induced by~$\hat E$.
Let $G^*_\chi\coloneqq(G^*,\chi^*)$ be the vertex-colored minor of $G_\chi$ obtained by contracting the edges~$\hat E$
where~$\chi^*$ is the vertex-coloring induced by $\chi$ (the connected components of $\hat G$
get assigned a fresh color,
and the remaining vertices keep their old color according to $\chi$).
Note that $G^*_\chi$ is an $\aut(G_\chi)$-invariant vertex-colored minor of $G_\chi$.
By induction on the order of~$G_\chi$, it holds that $\aut(G^*_\chi)\in \Gamma_{f(h)}$
and $\aut(G_\chi[Z])\in\Gamma_{f(h)}$ for each connected component $Z$ of $\hat G$.
By Lemma~\ref{lem:aut:invar:minor}, it follows that~$\aut(G_\chi)$ is in $\Gamma_{f(h)}$.

\icase{3. $G_\chi$ is not edge-transitive and not vertex-transitive}

Let~$O_1$ be a vertex $\aut(G_\chi)$-orbit of minimum size, and
let~$O_2$ be a vertex $\aut(G_\chi)$-orbit adjacent to~$O_1$. Let~$\hat E$ be an edge $\aut(G_\chi)$-orbit whose edges have end points in both~$O_1$ and~$O_2$. Consider the graph~$\hat G\coloneqq(V(G),\hat E)$ induced by~$\hat E$. 
Consider the set $\CZ$ of connected components of~$\hat G$.
For each connected component $Z\in\CZ$ of~$\hat G$,
we define the vertex-colored graph $G_{Z,\chi}\coloneqq(G[Z],\chi')$
where $\chi'(v)\coloneqq (\chi(v),i)$ for $v\in Z$ and $i\in\{1,2\}$ is defined such that $v\in O_i$.
Then, for each connected component $Z\in\CZ$ of~$\hat G$ the vertex-colored graph $G_{Z,\chi}$ has exactly two $\aut(G_{Z,\chi})$-orbits $O_1\cap Z\neq O_2\cap Z$,
and for these two orbits it holds that $|O_1\cap Z|\leq |O_2\cap Z|$.
Now, consider the $\aut(G_\chi)$-invariant minor $G^*_\chi\coloneqq(G^*,\chi^*)$ that is obtained from $G$ by contracting the connected components of $\hat G$
where $\chi^*$ is the vertex-coloring induced by $\chi$ (as defined in the previous case).
Note that $O^*\coloneqq\CZ$ is the unique smallest $\aut(G^*_\chi)$-orbit of $G^*$.
By induction, for each branch set $Z\in\CZ$ it holds that $\aut(G_{Z,\chi})[O_1\cap Z]\in\Gamma_{f(h)}$
and also for $G^*_\chi$ it holds that $\aut(G^*_\chi)[O^*]\in\Gamma_{f(h)}$.
By Lemma~\ref{lem:aut:invar:minor}, it follows that~$\aut(G_\chi)[O_1]$ is in $\Gamma_{f(h)}$. 
\end{proof}

\begin{lem}\label{lem:aut:of:disconnected:graphs}
If~$G_\chi=(G,\chi)$ is a vertex-colored disconnected graph, then~$\aut(G_\chi)$ is a direct product of wreath products with the symmetric group as the top group of the automorphism groups induced on the connected components of~$G$.
\end{lem}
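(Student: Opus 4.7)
The plan is to decompose the automorphism action by grouping the connected components of $G$ into isomorphism classes (as vertex-colored graphs) and observing that the action on each class is an independent wreath product.

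First, I would let $C_1, \ldots, C_k$ be the connected components of $G$, and note that any automorphism $\sigma \in \aut(G_\chi)$ sends each $C_i$ to some $C_j$. This is because connectedness is preserved by $\sigma$ and every vertex of $C_i$ must go to a vertex in the same connected component as its image. Moreover, since $\sigma$ preserves the coloring and edge relation, the restriction $\sigma|_{C_i}$ is an isomorphism of vertex-colored graphs from $G_\chi[C_i]$ to $G_\chi[C_j]$. In particular, $\sigma$ only permutes components that are isomorphic as vertex-colored graphs.

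Next, I would partition the set of components into isomorphism classes $\CI_1, \ldots, \CI_r$, where $\CI_s$ consists of $t_s$ components each isomorphic to some vertex-colored graph $H_s$. Fixing for each $s$ a reference component $C^{(s)} \in \CI_s$ and isomorphisms from each component in $\CI_s$ to $C^{(s)}$, we can describe any automorphism $\sigma$ by the tuple consisting of, for each $s$, the permutation $\pi_s \in S_{t_s}$ induced on $\CI_s$ together with, for each component in $\CI_s$, an element of $\aut(G_\chi[C^{(s)}]) = \aut(H_s)$ encoding the action conjugated through the reference isomorphisms. This description gives a group isomorphism
\[
\aut(G_\chi) \;\cong\; \prod_{s=1}^{r} \bigl(\aut(H_s) \wr S_{t_s}\bigr),
\]
where the wreath product is taken in its natural imprimitive action. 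Conversely, any such tuple defines a valid automorphism of $G_\chi$ since the components are disjoint and automorphisms of them can be assembled independently.

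Finally, I would observe that $\aut(H_s)$ coincides with the automorphism group induced on any component in $\CI_s$ (they are all conjugate via the chosen reference isomorphisms), so the statement of the lemma follows. There is no real obstacle here; the only care required is in setting up the reference isomorphisms so that the assembled map is well-defined and a group homomorphism, which is a routine check.
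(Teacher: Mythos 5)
Your proof is correct and follows essentially the same route as the paper's: partition the connected components into isomorphism classes, observe that the action on each class is a wreath product $\aut(H_s)\wr S_{t_s}$, and take the direct product over classes. You simply spell out the bookkeeping (reference components and conjugating isomorphisms) that the paper leaves implicit.
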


\begin{proof}
If the graph consists of~$t$ isomorphic connected graphs, we obtain precisely the wreath product with the symmetric group~$S_t$. Partitioning the components by isomorphism type, we obtain the direct product of the corresponding wreath products.
\end{proof}

We are now ready to state our main theorem, which employs the function~$f$ that exists by Lemma~\ref{lem:smallest:orbit:is:gamma:d}. Recall that~$\Theta_{d}$ is a restricted class of groups defined via certain repeated extensions using as building blocks symmetric groups and groups whose non-abelian composition factors are subgroups of~$S_d$ (as outlined in Section~\ref{sec:prelims}).

\begin{theo}\label{thm:kh:minor:free:means:Theta:group}
If~$G$ is a~$K_{h+1}$-minor-free graph, then~$\aut(G)\in \Theta_{f(h)}$.
\end{theo}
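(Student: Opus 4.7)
The plan is to prove by induction on $|V(G)|$ the stronger statement that for every vertex-colored $K_{h+1}$-minor-free graph $(G,\chi)$ we have $\aut(G,\chi) \in \Theta_{f(h)}$, where $f$ is the function supplied by \cref{lem:smallest:orbit:is:gamma:d}. The theorem then follows by applying this to a constant coloring. First note that $\Gamma_{f(h)} \subseteq \Theta_{f(h)}$ (taking $\Psi$ trivial in property 4 of $\Theta_{f(h)}$), and that the base case $|V(G)| \leq 1$ is immediate.

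For the inductive step, if $G$ is disconnected, then \cref{lem:aut:of:disconnected:graphs} expresses $\aut(G,\chi)$ as a direct product of wreath products of the form $\aut(G[C],\chi|_C) \wr S_t$, one factor for each isomorphism type of connected component $C$. Each such component is $K_{h+1}$-minor-free and strictly smaller than $G$, so induction places its automorphism group in $\Theta_{f(h)}$, and closure of $\Theta_{f(h)}$ under direct products and wreath products with a symmetric top group (properties 2 and 3) finishes this case.

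Now suppose $G$ is connected. Choose a vertex $\aut(G,\chi)$-orbit $O$ of minimum cardinality. \cref{lem:smallest:orbit:is:gamma:d} yields $\aut(G,\chi)[O] \in \Gamma_{f(h)}$. Let $K \trianglelefteq \aut(G,\chi)$ be the kernel of the action on $O$, so that $\aut(G,\chi)/K$ embeds into $\aut(G,\chi)[O]$ and is therefore a $\Gamma_{f(h)}$-group. If $O = V(G)$, then $K$ is trivial and $\aut(G,\chi) \in \Gamma_{f(h)} \subseteq \Theta_{f(h)}$. Otherwise, define a refined coloring $\chi'$ on $V(G) \setminus O$ by $\chi'(v) \coloneqq (\chi(v), N_G(v) \cap O)$; since this records each non-$O$ vertex's full neighborhood into $O$, restriction gives an isomorphism $K \cong \aut(G-O,\chi')$ (any automorphism of $(G-O,\chi')$ extends uniquely to an automorphism of $G$ fixing $O$ pointwise, and vice versa). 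The graph $G-O$ is $K_{h+1}$-minor-free and has strictly fewer vertices, so by induction $K \cong \aut(G-O,\chi') \in \Theta_{f(h)}$, and then property 4 of $\Theta_{f(h)}$ gives $\aut(G,\chi) \in \Theta_{f(h)}$.

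The argument is essentially a clean induction with no genuine obstacle, since all the substantial work is already carried out in \cref{thm:edge:transitive:twin:free:comp:fac:bound}, \cref{lem:smallest:orbit:is:gamma:d}, and \cref{lem:aut:of:disconnected:graphs}. The only care needed is to strengthen the induction hypothesis to vertex-colored graphs, so that the descent to $G-O$ in the connected case preserves the attachment information between the removed orbit and the rest of the graph via the refined coloring $\chi'$.
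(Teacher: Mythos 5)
Your proof is correct and follows essentially the same approach as the paper: induction over vertex-colored graphs, the disconnected case via Lemma~\ref{lem:aut:of:disconnected:graphs}, the minimum orbit~$O$ giving a~$\Gamma_{f(h)}$-quotient via Lemma~\ref{lem:smallest:orbit:is:gamma:d}, and the kernel handled by induction. The only variation is cosmetic: you delete~$O$ and encode the attachments~$N_G(v)\cap O$ into the coloring in one step, whereas the paper first individualizes the vertices of~$O$ in place and then removes the resulting singleton orbits; your route lets the induction run on~$|V(G)|$ alone rather than the paper's combined measure on size and number of color classes.
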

\begin{proof}
We show the more general statement for vertex-colored graphs~$G_\chi=(G,\chi)$ by induction on the size of~$G$
and the number of color classes.

If~$G$ is disconnected, then by induction and by Lemma~\ref{lem:aut:of:disconnected:graphs}, the automorphism group is a direct product of wreath products with symmetric (top) groups and (base) groups in~$\Theta_{f(h)}$.

If there is an $\aut(G_\chi)$-orbit $O$ of size exactly one, we can get rid of this orbit
by removing this vertex of $O$ from the graph and by coloring the neighbors accordingly.
More precisely, we define a vertex-colored graph $G_\chi'\coloneqq(G-O,\chi')$ where
$\chi'(v)\coloneqq(\chi(v), N_G(v)\cap O)$ for $v\in V(G)\setminus O$.
Now, the group $\aut(G_\chi)$ is isomorphic to $\aut(G_\chi')$, and thus the theorem follows by induction.

We can thus assume that the smallest $\aut(G_\chi)$-orbit has size at least 2 and that $G$ is connected.
Let~$O$ be an orbit of minimum size. The group~$\aut(G_\chi)$ acts naturally on~$O$ via the induced action. 
By Lemma~\ref{lem:smallest:orbit:is:gamma:d}, we know that the induced group on the orbit is in~$\Gamma_{f(h)}$. Consider the kernel of this homomorphism. It suffices to show that this kernel is in~$\Theta_{f(h)}$ since~$\aut(G_\chi)$ is an extension of this kernel by the~$\Gamma_{f(h)}$-action on~$O$.
The kernel consists of the automorphisms that fix all points in~$O$. We individualize all vertices in that orbit by refining the coloring.
More precisely, we define a vertex-colored graph $G_\chi'\coloneqq(G,\chi')$
where $\chi'(v)=(\chi(v),0)$ for all $v\in V(G)\setminus O$ and $\chi'(v)=(v,1)$ for all $v\in O$.
Then, the kernel is equal to $\aut(G_\chi')$, and is in $\Theta_{f(h)}$ by induction.
\end{proof}

\section{Babai's conjectures}\label{sec:babai:conj}

We now state three of Babai's conjectures that can for example be found in~\cite{MR633647}, and argue that our structural analysis shows that indeed each conjecture holds.

\begin{theo}
There is a function~$f$ such that a composition factor of the automorphism group of a graph of Hadwiger number at most~$h$ is cyclic, alternating or has order at most~$f(h)$. 
\end{theo}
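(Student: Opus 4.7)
The plan is to deduce the theorem immediately from Theorem~\ref{thm:kh:minor:free:means:Theta:group}, which asserts that $\aut(G)\in\Theta_{f(h)}$ for every $K_{h+1}$-minor-free graph~$G$, combined with a straightforward structural analysis of the composition factors of groups in $\Thetad$ and~$\Gammad$. Since a graph of Hadwiger number at most~$h$ is by definition $K_{h+1}$-minor-free, it suffices to bound the composition factors of any group in $\Thetad$ for $d\coloneqq f(h)$.

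First I would show by induction on the construction of $\Gammad$ that every non-abelian composition factor of a group in $\Gammad$ is a simple subquotient of some subgroup of~$S_d$, hence has order at most $d!$; all remaining composition factors are cyclic of prime order. Indeed, the trivial group contributes nothing, an extension by a cyclic group contributes only cyclic composition factors, and an extension by a subgroup of~$S_d$ adds only composition factors of that subgroup, whose orders are bounded by $d!$.

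Next I would analyze $\Thetad$ by induction on its construction. Direct products and extensions do not introduce composition factors beyond those of the building blocks, so only the wreath-product rule produces genuinely new behavior. For $\Delta\wr S_t=\Delta^t\rtimes S_t$, the new composition factors are precisely those of~$S_t$; for $t\geq 5$ these are the alternating group~$A_t$ and the cyclic group~$\mathbb{Z}_2$, and for $t<5$ they are cyclic or small alternating groups. Extensions by $\Gammad$-groups contribute only composition factors already handled in the previous paragraph.

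Combining these observations, every composition factor of a group in $\Thetad$ is cyclic, alternating, or of order at most $d!$. Applying this with $d=f(h)$ from Theorem~\ref{thm:kh:minor:free:means:Theta:group} and redefining the bounding function as $h\mapsto (f(h))!$ yields the statement. I expect no real obstacle: the substantive content was already packaged into Theorem~\ref{thm:kh:minor:free:means:Theta:group}, and what remains is routine bookkeeping of composition factors through the closure operations defining $\Gammad$ and $\Thetad$.
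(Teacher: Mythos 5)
Your proposal is correct and follows essentially the same route as the paper: the paper's proof is a one-line appeal to Theorem~\ref{thm:kh:minor:free:means:Theta:group} together with the observation (already recorded in the preliminaries) that non-abelian composition factors of $\Theta_{f(h)}$-groups are either alternating or subgroups of $S_{f(h)}$. You simply spell out the routine induction over the closure rules defining $\Gammad$ and $\Thetad$ that the paper treats as immediate.
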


\begin{proof}
This follows directly from Theorem~\ref{thm:kh:minor:free:means:Theta:group}
since groups in~$\Theta_{f(h)}$ have the desired composition factors.
\end{proof}

\begin{theo}
Only finitely many non-cyclic simple groups are represented by graphs of bounded Hadwiger number. 
\end{theo}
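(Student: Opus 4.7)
The plan is to invoke Theorem~\ref{thm:kh:minor:free:means:Theta:group} and then analyze which non-cyclic simple groups can appear inside the class $\Theta_{f(h)}$. Specifically, I will first observe that if $\Delta$ is the automorphism group of a graph of Hadwiger number at most $h$, then $\Delta\in\Theta_{f(h)}$. Thus, it suffices to show that only finitely many non-cyclic simple groups lie in $\Theta_d$ for any fixed $d$, since then (applying this to $d\coloneqq f(h)$) only finitely many non-cyclic simple groups are represented by graphs of bounded Hadwiger number.

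The key step is a short structural argument on $\Theta_d$. I would show by induction on the construction of $\Theta_d$ that every non-cyclic simple group in $\Theta_d$ is (isomorphic to) a subgroup of $S_d$. The trivial group is cyclic, so the base case is vacuous. A non-trivial direct product $\Delta\times\Delta'$ has a proper non-trivial normal subgroup and hence cannot be simple. A wreath product $\Delta\wr S_t$ with $\Delta$ non-trivial and $t\geq 2$ contains the proper non-trivial normal subgroup $\Delta^t$, so it is not simple; while $\{1\}\wr S_t=S_t$ is cyclic for $t\leq 2$ and has the proper normal subgroup $A_t$ for $t\geq 3$, so it is never simple and non-cyclic; and the case $t=1$ is tautological. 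Finally, if $\Delta\in\Theta_d$ arises as an extension $\Psi\trianglelefteq\Delta$ with $\Delta/\Psi\in\Gamma_d$ and $\Delta$ is simple and non-cyclic, then either $\Psi=\Delta$ (reducing to a previous construction step) or $\Psi=\{1\}$, in which case $\Delta\in\Gamma_d$; but by definition the non-abelian composition factors of a group in $\Gamma_d$ are subgroups of $S_d$, and since $\Delta$ is its own unique composition factor, $\Delta$ is a subgroup of $S_d$.

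Since $S_d$ is finite, it has only finitely many subgroups up to isomorphism, so only finitely many non-cyclic simple groups lie in $\Theta_d$. The main obstacle (if any) is not really technical but rather conceptual: one has to check that the four closure operations defining $\Theta_d$ cannot newly produce a non-cyclic simple group beyond those already available as subgroups of $S_d$ through the $\Gamma_d$-extension clause. The argument above handles each closure operation directly and leaves no additional cases.
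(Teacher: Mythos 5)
Your proposal is correct and takes essentially the same approach as the paper: the paper also invokes Theorem~\ref{thm:kh:minor:free:means:Theta:group} and then observes that non-cyclic simple groups in~$\Theta_{f(h)}$ have bounded order. You have simply spelled out the (routine but omitted) verification that each closure operation defining~$\Theta_d$ cannot produce a non-cyclic simple group other than a subgroup of~$S_d$.
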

\begin{proof}
Again this follows directly from Theorem~\ref{thm:kh:minor:free:means:Theta:group} since
non-cyclic simple groups in~$\Theta_{f(h)}$ have bounded order.
\end{proof}

The third conjecture states that if the order of the automorphism group of a graph of bounded Hadwiger number does not have small prime divisors, then the group is a repeated direct and wreath product of abelian groups.

Recall the parameter $\alpha_h\in\CO(h\cdot\sqrt{\log h})$ from
\cref{theo:1} bounding the average degree in~$K_{h+1}$-minor-free graphs. Also recall that a permutation group is semi-regular if only the identity has a fixed point and regular if it is additionally transitive.

\begin{lem}\label{lem:smallest:orbit:is:abelian}
Let~$G$ be a connected, $K_{h+1}$-minor-free graph.
Let $\Delta\leq\aut(G)$ be a subgroup such that all
prime factors of $|\Delta|$
are greater than $\max\{\alpha_h,2\}$,
and let $O$ be a minimum cardinality $\Delta$-orbit.
Then, the induced group $\Delta[O]$ is regular and abelian.
Furthermore, if $\Delta$ is fixed-point free, then there is a fixed-point-free element $\delta\in\Delta$.
\end{lem}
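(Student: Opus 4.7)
The plan is to induct on $|V(G)|$, paralleling the case analysis in the proofs of \cref{lem:smallest:orbit:is:gamma:d} and \cref{thm:kh:minor:free:means:Theta:group}, and exploiting throughout the elementary observation that if $\Delta$ acts on a set $X$ with $|X|\le\alpha_h$, then the action is trivial: the image in $\sym(X)$ has order dividing both $|\Delta|$ and $|X|!$, whose prime factor sets are disjoint under the hypothesis on $\Delta$. An immediate consequence is that either $|O|=1$, in which case both conclusions hold vacuously, or $|O|>\alpha_h$, which is the interesting regime.

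First I would reduce to the case in which $G$ is $\Delta$-edge-transitive by adapting the contraction argument from \cref{lem:smallest:orbit:is:gamma:d}. Choose a $\Delta$-edge orbit $\hat E\subsetneq E(G)$, let $\CZ$ be the set of connected components of $(V(G),\hat E)$, and contract each $Z\in\CZ$ to form a minor $G^*$. The induced action $\Delta^*$ on $G^*$ and the restrictions of $Z$-stabilizers to the induced subgraphs $G[Z]$ both satisfy the large-prime hypothesis, and the underlying graphs are smaller and still $K_{h+1}$-minor-free, so the inductive hypothesis applies to each of them. An exact-sequence argument in the style of \cref{lem:aut:invar:minor} then assembles the regular-abelian conclusion for $\Delta[O]$ from the inductive conclusions on $\Delta^*[O^*]$ and on the minimum-orbit action inside each $Z$.

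For the base case of the induction, $\Delta$-edge-transitivity, the plan is to use the precise structural information developed in the proofs of \cref{thm:edge:transitive:twin:free:comp:fac:bound} and \cref{lem:comp:factors:of:two:end:vertex:trans}. After contracting twin classes (each has size at most $\alpha_h$ and is hence fixed pointwise by $\Delta$), one falls into one of the structural subcases. In the torus or Klein-bottle subcase, $\aut(G)$ has a normal abelian translation subgroup $A$ of index at most $12$; because the index involves only the primes $2$ and $3\le\alpha_h$, one has $\Delta\le A$, and $\Delta$ acts freely on $V(G)$. In the two-ended subcase, the normal subgroup $N\trianglelefteq\aut(G)$ fixing level sets embeds into a product of copies of $S_{h+1}$ or of dihedral groups, so $|N|$ is supported on primes at most $h+1\le\alpha_h$; the key observation forces $\Delta\cap N=1$, and $\Delta$ injects into the cyclic-or-dihedral quotient. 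Excluding the $2$-part of the dihedral then forces $\Delta$ to be cyclic. In each subcase $\Delta$ is abelian, and since an abelian group acting transitively on $O$ has all point stabilizers equal to the kernel of the action, $\Delta[O]$ is regular and abelian.

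For the second conclusion, in the torus subcase any nontrivial element of $\Delta$ is fixed-point free because $A$ acts freely, and in the two-ended subcase $\Delta$ is cyclic so any generator shares the fixed-point set of $\Delta$, which is empty under the hypothesis. The main obstacle I anticipate is precisely this fixed-point-free part of the non-edge-transitive step: a lift to $\Delta$ of a fixed-point-free element of $\Delta^*$ could still fix vertices inside individual $Z$, so the lift must be synchronized with fixed-point-free elements of the $Z$-stabilizers provided by induction. The large-prime hypothesis re-enters here to prevent the union of vertex stabilizers from exhausting the cyclic factor used for this synchronization, which is exactly what makes the statement fail without the restriction on prime factors.
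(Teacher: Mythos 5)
Your proposal diverges from the paper's argument in two ways that both introduce genuine gaps.

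First, the ``exact-sequence argument in the style of \cref{lem:aut:invar:minor}'' cannot deliver the conclusion you want. An extension of an abelian group by an abelian group need not be abelian (the Heisenberg group over~$\mathbb{Z}_p$ is the canonical obstruction, and it has only the prime~$p$ dividing its order, so the large-prime hypothesis does nothing to save you). The paper's treatment of the intransitive case sidesteps this entirely: it picks~$O_2$ adjacent to the minimum orbit~$O_1$ and shows that the components~$Z$ of the chosen edge-orbit subgraph are \emph{stars centered in~$O_1$}. Contracting these stars is then $\Delta$-equivariantly the same as deleting~$O_2$, i.e.~$\Delta^*[O^*]$ is literally the same permutation group as~$\Delta[O_1]$. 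There is no extension to assemble; abelianity and regularity are inherited directly. The same identification is what makes the fixed-point-free lift immediate — a lift of a fixed-point-free~$\delta^*$ moves every star and hence every vertex inside it — so the ``synchronization'' difficulty you anticipate simply does not arise in the paper's setup.

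Second, your base case relies on the torus/Klein-bottle and ring-like structure extracted in the proof of \cref{thm:edge:transitive:twin:free:comp:fac:bound}. Those statements are established only \emph{for~$j$ sufficiently large} along a convergent sequence, with a threshold tied to the unquantified function~$f(h)$ from \cref{lem:degree:bound} (which is exponential in~$h$, not~$\alpha_h$). For an individual graph that may be small, all the theorem yields is~$\aut(G)\in\Gamma_{f(h)}$, and that does not rule out a non-abelian~$\Delta$ whose prime divisors sit between~$\alpha_h$ and~$f(h)$. The paper's lemma is deliberately logically independent of the edge-transitive classification: in the~$\Delta$-transitive case it argues directly that~$\Delta$ is regular (a point stabilizer would have to move a vertex among at most~$\alpha_h$ neighbors, forcing a small prime) and then carries out a self-contained analysis of edge-orbit cycles, spanning cycles, and ``local grid'' configurations to either read off commuting generators or build a large complete minor. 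This is an elementary counting-and-minor argument, not an invocation of the heavy structure theory, and that is essential because the hypothesis only gives you primes exceeding~$\alpha_h$.
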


\begin{proof}

We can assume that $|V(G)|> 1$, otherwise we are done.
\icase{1. $\Delta$ is transitive}
Note that in this case the graph $G$ is vertex-transitive since $\Delta\leq\aut(G)$.

We argue first that~$\Delta$ is regular, i.e., that all point
stabilizers are trivial (in addition to transitivity). Indeed, if this
were not the case, then since~$G$ is connected there would be a vertex~$v$ so that in the
point stabilizer~$\Delta_v\leq\Delta$ there is an automorphism that
moves neighbors of~$v$.
However, Theorem~\ref{theo:1} implies that the number of neighbors of~$v$ is at
most~$\alpha_h$ (since $G$ is regular),
which would lead to an
automorphism whose order has a prime factor of size at most~$\alpha_h$.

The regularity also implies that there is a fixed-point-free element $\delta\in\Delta$, in fact all non-trivial elements in~$\Delta$ are fixed-point free.

\begin{claim}\label{claim:cycle}
Each edge orbit under $\Delta\leq\aut(G)$ is a disjoint union of cycles
of length at least $\alpha_h$.
\end{claim}

\begin{claimproof}
Consider now an edge~$e=x_1x_2\in E(G)$ with a direction~$(x_1,x_2)$ say. Let~$E_{\rightarrow}$ be the $\Delta$-orbit of
the directed edge $(x_1,x_2)$ under~$\Delta$.
Since $\Delta$ is transitive, for every vertex~$v$ there is at least one directed edge $(v,w)$ for some $w\in V(G)$ in the $\Delta$-orbit of~$(x_1,x_2)$. Since~$\Delta$ is regular, there is at most one such edge.
Thus, the (directed) graph~$(V(G),E_{\rightarrow})$ is a disjoint union of (directed) cycles.
These cycles all have length at least~$\alpha_h$,
due to the absence of small factors in~$|\Delta|$. Since~$e$ was arbitrary, the statement holds for all edge $\Delta$-orbits.
\end{claimproof}

Let~$\hat{E}\subsetneq E(G)$ be a maximal union of edge orbits under $\Delta$ so
that~$\hat G \coloneqq (V(G),\hat{E})$
is not connected (possibly~$\hat{E}$ is empty).
If $\hat G$ is edgeless, then by \cref{claim:cycle}, the graph $G$ contains an edge $\Delta$-orbit that is
one single cycle, and thus $\Delta\leq\aut(G)$ is a cyclic group.
In the following, we thus assume that $\hat G$ has at least one edge.
Note that all components of~$\hat G$ induce isomorphic graphs.
Let $\CZ$ be the set of connected components of $\hat G$,
and let $G^*\coloneqq G/\hat E$ be the minor that is obtained by contracting all edges in~$\hat E$
(or equivalently all connected components $Z\in\CZ$).
Let~$e_0\in E(G)$ be an edge not in~$\hat{E}$ (so that $e_0$ has endpoints in two distinct connected components $Z,Z'\in\CZ$) and let~$E_0\subseteq E(G)$ be its $\Delta$-orbit.

\begin{claim}\label{claim:match}
  There is a cyclic order	 $Z_1,\ldots,Z_n\in\CZ$ of the connected
  components such that all edges in $E_0$ lie between two consecutive connected
  components $Z_i,Z_{i+1}$ (where $Z_{n+1}\coloneqq Z_1$). Moreover, the edge set $E_0$
  induces a perfect matching between each pair of consecutive connected
  components~$Z_i,Z_{i+1}$.
\end{claim}

\begin{claimproof}
  Consider the natural homomorphism $g\colon\aut(G)\to\aut(G^*)$ with image
  $\Delta^* \coloneqq g(\Delta) \leq\aut(G^*)$.  Note that
  $\Delta^*\leq\aut(G^*)$ is transitive. Since $\Delta^*$ is a
  homomorphic image of~$\Delta$, all prime factors of $|\Delta^*|$ are
  greater than $\alpha_h$. Moreover, the factor graph $G^*$
  is a minor of $G$, and thus it is $K_{h+1}$-minor free as well.
  Thus,
  \cref{claim:cycle} also holds for $G^*$ and $\Delta^*$.  For this
  reason, the image of $E_0$ in $G^*$ is a disjoint union of
  cycles.  However, by the definition of $\hat E$, it must be one
  single cycle (otherwise $E_0$ would have been added to~$\hat E$).  This shows the
  desired cyclic ordering of the connected components in $\CZ$.

  In order to show that $E_0$ induces a matching, we
  consider the bipartite graph that is induced by $E_0$ on two
  consecutive connected components $Z_i,Z_{i+1}$.  On the one hand,
  all vertices in this bipartite graph have degree at most 1 (by
  \cref{claim:cycle}). (We use here that~$n> 2$ since~$n\geq 
  \max\{\alpha_h,2\}$.) On the other hand, there are no isolated
  vertices since each vertex in $Z_i\cup Z_{i+1}$ is adjacent to some edge in
  $E_0$ (by transitivity of $\Delta$).
\end{claimproof}

By \cref{claim:match}, the factor graph $G^*$ contains an edge
$\Delta^*$-orbit that is a cycle.
We say that a connected component $Z$ \emph{has a spanning cycle}
if there is an edge $\Delta$-orbit that induces a cycle on $Z$.

\icase{1.a. There is a spanning cycle for some (and thus for all) $Z\in\CZ$}

Let $E_c$ be an edge $\Delta$-orbit inducing a cycle on $Z$, let $e_c=x_1x_2\in E_c$ be an edge
with an orientation~$(x_1,x_2)$ say, and let $E_{\rightarrow}$ be the $\Delta$-orbit of $(x_1,x_2)$.
Clearly, the (directed) edge set $E_{\rightarrow}$ induces a directed cycle on each $Z\in\CZ$.

Suppose that $E_{\rightarrow}\cup E_0$ is \emph{locally a grid},
i.e., there are vertices $v,w\in Z,v',w'\in Z'$ with
$(v,w),(v',w')\in E_{\rightarrow}$ and $vv',ww'\in E_0$. Take an automorphism
$\delta\in\Delta$ that maps $v$ to $w$ (and thus $v'$ to $w'$ since $\delta$ maps each $Z\in\CZ$ to itself), and
take an automorphism $\delta'\in\Delta$ that maps $v$ to $v'$ (and
thus $w$ to $w'$). Then, it holds that
$v^{\delta\delta'}=w'=v^{\delta'\delta}$.  By regularity of $\Delta$,
we have that $\delta\delta'=\delta'\delta$, and thus the automorphisms
commute.
Note that $\delta$ and $\delta'$ generate $\Delta$
since the generated group is transitive on $V(G)$ and $\Delta$ is regular.
Thus, the group
$\Delta$ is abelian.

Now, suppose $E_{\rightarrow}\cup E_0$ is not locally a grid. We construct a minor as follows.
Recall that $n=|V(G^*)|>\alpha_h$ and that $m\coloneqq |Z|>\alpha_h$ for $Z\in\CZ$ (by \cref{claim:cycle}).
Let $Z_1,\ldots,Z_n\in\CZ$ be the connected components in their cyclic order, i.e.,
$Z_i,Z_{i+1}$ are matched via $E_0$ (where $Z_{n+1}=Z_1$).
First, we delete all edges between $Z_n$ and $Z_1$.
This leads to $m$ vertex-disjoint $E_0$-paths $P_1,\ldots,P_m$ with $n$ vertices (and $n-1$ edges) each.
We define $H$ as the minor with $m$ vertices
that is obtained by contracting each path $P_1,\ldots,P_m$
to a single vertex.
\begin{claim}\label{claim:twistedMinor}
The minor $H$ has average degree greater than $\alpha_h$.
\end{claim}

\begin{claimproof}
  Let $C_1\coloneqq v_1,\ldots,v_m$ be the (directed) $E_{\rightarrow}$-cycle in
  $Z_1$.  By possibly renaming the indices of the paths, we can
  assume that $v_i$ belongs to $P_i$ for all $i\in[m]$.  For all
  $i\in[n]$ let $C_i$ be the (directed) $E_{\rightarrow}$-cycle in $Z_i$.  The existence of the
  matching between $C_1$ and $C_2$ implies that if an automorphism
  $\delta\in\Delta$ rotates the cycle $C_1$ by one, it rotates the
  cycle $C_2$ by some positive integer $k\geq 1$ (and in fact
  $k\geq 2$ since we do not have a local grid).  By the regularity of
  $\Delta$, the integer $k$ is co-prime to $m$.  The fact that
  $\Delta$ acts cyclic on $\CZ$ implies that consecutive cycles
  $Z_i,Z_{i+1}$ are matched isomorphically,
  and thus if $C_i$ is rotated by one, then $C_{i+1}$ is rotated by
  $k$ (for the same $k$ as above).  In general, if an automorphism rotates $C_1$ by
  one, then for all $i\in[n]$ the cycle $C_i$ is rotated by $k^{i-1}$
  modulo $m$.  Now, let $n_0$ be the largest integer such that
  $k^0,k^1,k^2,\ldots,k^{n_0-1}$ are pairwise distinct modulo $m$.
  Since $k$ is co-prime to $m$ and since $k^{n}\equiv k^0\mod m$,
  it follows that $n_0$ is a divisor of $n$, and
  thus it holds that $n_0>\alpha_h$.  Finally, $H$ can be described as
  a graph with vertex set $v_1',\ldots,v_m'$ and an edge set $E(H)$ that
  is a union of $n_0$ cycles $C_1',\ldots,C_{n_0}'$ being pairwise
  edge-disjoint (when viewed as directed cycles).  Then, each vertex in $H$ has degree $2n_0>\alpha_h$
  (in particular, $(v_1',v_j')$ is a directed edge in $C_i'$ if and
  only if $j-1\equiv k^{i-1}\mod m$).
\end{claimproof}

Combining \cref{claim:twistedMinor} with \cref{theo:1} implies
that $G$ has a $K_{h+1}$ minor.

\icase{1.b. There is no spanning cycle for $Z\in\CZ$}

In the remaining case, the graph $\hat G$ has connected components
$Z\in\CZ$ that do not contain spanning cycles.
We show that also in
this case $G$ has a $K_{h+1}$ minor. By \cref{claim:match}, there are
(consecutive) connected components $Z,Z'\in\CZ$ and an edge orbit
$E_0$ that induces a matching between $Z$ and~$Z'$. Our strategy is to find a collection of disjoint cycles in~$Z$ and a collection of disjoint cycles in~$Z'$ so that each cycle in~$Z$ is adjacent to many cycles on~$Z'$ via the matching and vice versa. Contracting the cycles will give a minor of large average degree.

For a set $E\subseteq E(G)$ and $Z\in\CZ$,
let $\CZ_{E,Z}$ be the set of
connected components of $(V(G),E)[Z]$.
Pick an edge
$e'\subseteq Z'$ such that the number of connected components of the edge $\Delta$-orbit $E'$ of~$e'$ restricted to $Z'$ is as
small as possible, i.e., $|\CZ_{E',Z'}|$ is minimal.  Assume that $e'=v'w'$ and let $v,w\in Z$ be the
vertices that are matched via $E_0$ with $v'$ and~$w'$, respectively.  Let
$F$ be the $\Delta$-orbit of $vw$ and consider its restriction to~$Z$ (and note that the set $F$ might consist of non-edges).
Note that $|\CZ_{F,Z}|=|\CZ_{E',Z'}|>1$ since $Z'$ has no spanning cycle.
Therefore, we can find an edge
$\Delta$-orbit $E\subseteq E(G)$ such that each connected component
in $\CZ_{E,Z}$ intersects at least two connected components in $\CZ_{F,Z}$.
On the other hand, also each connected component of $\CZ_{F,Z}$ intersects
at least two connected components in $\CZ_{E,Z}$
since $|\CZ_{F,Z}|=|\CZ_{E',Z'}|\leq|\CZ_{E,Z}|$ by the minimal choice of $E'$.
We define the minor $H$ of $G$ by
restricting the graph to $Z\cup Z'$ and contract all edges in $E$ and
all edges in $E'$ (the edges of $Z'$ matched to $F$).
\begin{claim}
The minor $H$ has average degree greater than $\alpha_h$.
\end{claim}
\begin{claimproof}
It suffices to show that each connected component in $\CZ_{E,Z}$ intersects more than $\alpha_h$
connected components in $\CZ_{F,Z}$,
and each connected component in $\CZ_{F,Z}$ intersects more than $\alpha_h$ connected components in $\CZ_{E,Z}$.	
If $Z\in\CZ_{F,Z}$ intersects the connected components $Z_1,\ldots,Z_t\in \CZ_{E,Z},t\geq 2$,
then there is a permutation $\delta\in\Delta$ that rotates the $F$-cycle in $Z$ (stabilizing $Z$ setwise) and permutes $Z_1,\ldots,Z_t$
non-trivially. But since $|\Delta|$ has only prime factors greater than $\alpha_h$,
it follows that $t>\alpha_h$.
Symmetrically, the same argument can also be applied when the roles of $\CZ_{F,Z}$ and $\CZ_{E,Z}$ are swapped.
\end{claimproof}

Thus, the minor $H$ has average degree greater than $\alpha_h$, and contains $K_{h+1}$ as a minor.

\icase{2. $\Delta$ is not transitive}

Let $O_1$ be a minimum cardinality $\Delta$-orbit
and pick a second $\Delta$-orbit $O_2$ distinct from $O_1$
such that $O_1\cup O_2$ contains an edge $e$ with endpoints in $O_1$ and $O_2$.
Let $\hat E\subseteq E(G)$ be the edge $\Delta$-orbit of $e$,
let $\hat G\coloneqq(O_1\cup O_2,\hat E)$ be the subgraph of $G$ induced on $O_1\cup O_2$ and $\hat E$,
and let $\CZ$ be the set of connected components of
$\hat E$.

\begin{claim}
For all connected components $Z\in\CZ$ the induced graph $\hat G[Z]$ is a star (i.e., the complete bipartite graph~$K_{1,t}$ for some~$t$) with center in $O_1$.
\end{claim}

\begin{claimproof}
Assume for the sake of contradiction that there is a vertex $v_2\in Z\cap O_2$
such that $\deg_{\hat G[Z]}(v_2)>1$.
Then, since all edges of $\hat E$ are in the same $\Delta$-orbit,
there is an automorphism $\delta\in\Delta$
that fixes $v_2$ and acts non-trivially on $N_{\hat G[Z]}(v_2)\subseteq O_1$.
Since the order of $\delta\in\Delta$ does only have prime factors
greater than $\alpha_h$, this implies that
$\deg_{\hat G[Z]}(v_2)>\alpha_h$.
Since $\hat G[Z]$ is biregular and since $|O_1|\leq|O_2|$,
we have that $\deg_{\hat G[Z]}(v_1)\geq \deg_{\hat G[Z]}(v_2)>\alpha_h$ for all $v_1\in Z\cap O_1$.
This contradicts the fact that the average degree of $\hat G[Z]$
is at most $\alpha_h$ (\cref{theo:1}).
\end{claimproof}

Let $G^*\coloneqq G/\CZ$ be the minor of
$G$ that is obtained by contracting all connected components in $\CZ$.
Consider the homomorphism $g\colon\aut(G)\to\aut(G^*)$
with image $\Delta^*\coloneqq g(\Delta)$.
Note that $O^*\coloneqq\CZ$ is a minimum cardinality $\Delta^*$-orbit.
As homomorphic image of~$\Delta$, the order of the subgroup $\Delta^*\leq\aut(G^*)$ has
only prime factors greater than~$\alpha_h$.
By induction,
we conclude that $\Delta^*[O^*]$ is regular and abelian.
Note that $\Delta^*$ is isomorphic to $\Delta[V(G)\setminus O_2]$ (as a permutation group),
and thus $\Delta[O_1]$ is regular and abelian as well.

Furthermore, if $\delta^*\in\Delta^*\leq\aut(G^*)$ is fixed-point free,
then each element in the preimage $g^{-1}(\delta^*)\subseteq\Delta\leq\aut(G)$
is fixed-point free.
\end{proof}

Regarding the parameter $\alpha_h$ from \cref{theo:1}, note that the complete bipartite graph $K_{h,h}$ is $K_{h+1}$-minor free and has average degree $h$,
and thus $\alpha_h\geq h$.

\begin{theo}
Let~$G$ be a~$K_{h+1}$-minor-free graph
such that all
prime factors of $|\aut(G)|$ are greater than
$\max\{\alpha_h,2\}$.
Then, the automorphism group $\aut(G)$ is a repeated direct and wreath product of abelian groups. 
\end{theo}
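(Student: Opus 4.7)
The plan is to extend the statement to vertex-colored graphs $G_\chi=(G,\chi)$ (under the same prime-factor hypothesis) and proceed by strong induction on $|V(G)|+|\aut(G_\chi)|$. The hypothesis that every prime factor of $|\aut(G_\chi)|$ exceeds $\max\{\alpha_h,2\}$ is inherited by every subgroup, hence survives every coloring refinement encountered in the induction; in particular it implies that $|\aut(G_\chi)|$ is odd. I denote by $\Omega$ the class of groups obtainable by repeated direct and wreath products starting from abelian groups, so that the target is $\aut(G_\chi)\in\Omega$.

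Mirroring the reductions in the proof of \cref{thm:kh:minor:free:means:Theta:group}, I would first dispose of two easy cases. If $G$ is disconnected, then since $|\aut(G_\chi)|$ is odd no two connected components of $G$ can be isomorphic as vertex-colored graphs, for otherwise the transposition swapping them would contribute a factor of $2$. Consequently $\aut(G_\chi)$ is the direct product of the automorphism groups of the components, each of them smaller and still satisfying the prime-factor hypothesis. By the inductive hypothesis each factor lies in $\Omega$, and $\Omega$ is closed under direct products. If $G$ is connected and carries an $\aut(G_\chi)$-fixed vertex $v$, I remove $v$ and record its neighbors in a refined coloring, obtaining a smaller vertex-colored graph with the same automorphism group, to which the inductive hypothesis applies.

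In the main case $G$ is connected and the smallest $\aut(G_\chi)$-orbit $O$ has size at least $2$. Here I apply \cref{lem:smallest:orbit:is:abelian} with $\Delta=\aut(G_\chi)$ to conclude that $A\coloneqq\aut(G_\chi)[O]$ is regular and abelian. Let $K$ be the kernel of the restriction $\aut(G_\chi)\to A$, and individualize the vertices of $O$ by refining the coloring to a coloring $\chi'$ giving each vertex of $O$ a unique color. Then $\aut(G_{\chi'})=K$, which is a subgroup of $\aut(G_\chi)$ and hence still satisfies the prime-factor hypothesis; since $|K|<|\aut(G_\chi)|$ the inductive hypothesis places $K\in\Omega$.

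It remains to assemble $\aut(G_\chi)$ from $K$ and $A$ as an element of $\Omega$. Regularity of the $A$-action on $O$ gives $\aut(G_\chi)_v=K$ for every $v\in O$, and the goal is to exhibit $\aut(G_\chi)$ as a wreath product $K_0\wr A$ for a suitable building block $K_0\in\Omega$, or more generally as a tower of wreath and direct product operations. When $A$ acts freely on $V(G)\setminus O$, the $A$-orbits together with the vertices of $O$ partition $V(G)$ into $|O|$ isomorphic fibers that $K$ preserves and $A$ permutes regularly, directly identifying $\aut(G_\chi)$ with the wreath product of the $K$-action on one fiber by $A$. The main obstacle is the case where $A$ fails to act freely on $V(G)\setminus O$, so that no such fiber partition exists. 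To handle this I would invoke the primary decomposition $A=\prod_p A_p$ and, for each proper subgroup $A'\leq A$, consider the block system on $V(G)$ obtained by taking the orbits of the preimage of $A'$ in $\aut(G_\chi)$. The induced action on blocks is a quotient of $A$, still abelian, and the within-block action is a strictly smaller group satisfying the prime-factor hypothesis, so the inductive hypothesis and \cref{lem:smallest:orbit:is:abelian} may be reapplied at every level of the recursion. This assembles $\aut(G_\chi)$ as a tower of wreath and direct product decompositions and thus places it in $\Omega$.
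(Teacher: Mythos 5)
The setup matches the paper's — induction over vertex-colored graphs, the disconnected and fixed-vertex reductions, and the appeal to Lemma~\ref{lem:smallest:orbit:is:abelian} to get an abelian regular action on the minimal orbit~$O$. But the assembly step, which you yourself flag as the main obstacle, is where the argument breaks down, and the proposed fix does not close the gap.

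Knowing $K=\aut(G_{\chi'})\in\Omega$ (via individualizing $O$) and $\aut(G_\chi)/K\cong A$ abelian is not enough: the class $\Omega$ of repeated direct and wreath products of abelian groups is \emph{not} closed under extensions by abelian groups. For instance, the nonabelian group of order~$27$ and exponent~$9$ is an extension of~$\ZZ_9$ by~$\ZZ_3$, yet it is directly indecomposable and is not a wreath product of abelian groups, so it lies outside~$\Omega$. To conclude, one has to exhibit an explicit split wreath structure, which requires two graph-theoretic facts your proposal never establishes: that the kernel factors as a \emph{direct product} across pieces of the graph, and that this decomposition admits a \emph{complement} realizing the top action. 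Your ``free action'' case asserts a fiber decomposition of $V(G)$ by the $A$-action, but $A=\aut(G_\chi)[O]$ acts only on~$O$, not on $V(G)\setminus O$, so the described partition of~$V(G)$ into~$|O|$ isomorphic fibers is not well defined. And the proposed recursion over proper subgroups $A'\le A$ and block systems, while giving towers of extensions, again only yields extensions, not wreath products.

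The paper resolves exactly this by working not with $O$ but with the set~$F\supseteq O$ of \emph{all} fixed points of the pointwise stabilizer $\aut(G_\chi)_{(O)}$, and then looking at the components~$\CZ$ of $G-F$. The two load-bearing facts are: (i) $|N_G(Z)|\le h$ for each $Z\in\CZ$ (proved by a tree-plus-fixed-point-free-element argument producing a $K_{h+1,h+1}$-minor otherwise), which combined with the prime-factor hypothesis forces any automorphism stabilizing~$Z$ to fix $N(Z)$ and in fact all of~$F$ pointwise; and (ii) Claim~\ref{claim:extends}: every automorphism of the colored component $G_{Z,\chi}$ extends to an automorphism of~$G_\chi$ fixing $N(Z)$ pointwise. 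Point (ii) is precisely what gives the base group as a direct product of the $\aut(G_{Z,\chi})$ with independent choices, and the paper then explicitly constructs the complement~$\Phi$ to produce the internal wreath product $\Psi\Phi=\aut(G_\chi)$. Without analogues of (i) and (ii), the decomposition into a wreath product cannot be obtained from the mere existence of the exact sequence $1\to K\to\aut(G_\chi)\to A\to 1$.
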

\begin{proof}
To facilitate induction over~$|V(G)|$, we prove the statement for vertex-colored
graphs $G_\chi=(G,\chi)$.
In the base case when $|V(G)|=1$, there is nothing to show.

If~$G$ had isomorphic connected components, then~$\Aut(G_\chi)$ would have
an automorphism of order 2. Thus, the automorphism group $\Aut(G_\chi)$ is the direct product of
the automorphism groups of the connected components.
We can thus assume that~$G$ is connected.

By applying \cref{lem:smallest:orbit:is:abelian} to the (uncolored) graph $G$ with $\Delta\coloneqq\aut(G_\chi)$,
we conclude that there is a (minimal) $\Aut(G_\chi)$-orbit $O$ such that
$\aut(G_\chi)[O]$
is abelian. Let~$\aut(G_\chi)_{(O)}$ be the pointwise stabilizer of~$O$.
Consider the fixed points $F\supseteq O$ of $\aut(G_\chi)_{(O)}$.
By definition of~$F$, the induced group $\aut(G_\chi)[F]$ is isomorphic to
$\aut(G_\chi)[O]$,
and thus abelian.

Let us observe that~$\aut(G_\chi)[F]$ is semi-regular as follows. The size of an orbit in~$F$ cannot be larger than~$|O|$ since~$\aut(G_\chi)[O]$ is regular. It cannot be smaller than~$|O|$ since~$|O|$ is minimal.
This implies that~$\aut(G_\chi)[F]$ acts regularly on each orbit because transitive abelian permutation groups are always regular.

Let $\CZ$ be the connected components of
$G-F$.

\begin{claim}\label{claim:small}
$|N_{G}(Z)|\leq h$ for each $Z\in\CZ$.
\end{claim}

\begin{claimproof}
Assume for the sake of contradiction that $|N_{G}(Z)|> h$.
We construct a $K_{h+1}$ minor as follows.
Since the automorphism group order $|\aut(G_\chi)|$ does not have 2 as a prime factor,
the group $\aut(G_\chi)_{(O)}\leq\aut(G_\chi)$ fixes all connected components
setwise (otherwise there is a permutation in $\aut(G_\chi)_{(O)}$ that swaps two components in $\CZ$ which must have even order).
The group~$\aut(G_\chi)_{(O)}$ induces on~$Z\in\CZ$ a group $\Delta_Z\coloneqq\aut(G_\chi)_{(O)}[Z]$.
Note that $\Delta_Z\leq\aut(G_\chi[Z])$ and, the group being a restriction of~$\aut(G_\chi)_{(O)}$, the order of $\Delta_Z$ does only have prime factors
greater than $\alpha_h$.
Furthermore, the subgroup $\Delta_Z$ does not have fixed points since all fixed points of $\aut(G_\chi)_{(O)}$
are in $F$, which is disjoint from $Z$.
By \cref{lem:smallest:orbit:is:abelian},
there is a fixed-point-free permutation $\delta_Z\in\Delta_Z$.
In the following, we construct a tree in $G[Z\cup N(Z)]$ that contains at most
one vertex from each $\Delta_Z$-orbit and whose set of leaves is 
exactly the set $N(Z)$.
This can be done greedily: initially, we pick an arbitrary vertex $v\in N(Z)$
and add this vertex to the tree $T$, i.e., define $V(T)\coloneqq\{v\}$.
While there is a vertex $w\in N(Z)$
that is not yet contained in $V(T)$, we extend the tree $T$
by adding a shortest path in $G[Z\cup\{w\}]$ from $V(T)$ to $w$.
Note that if $v,v'\in O_Z$ are in the same $\Delta_Z$-orbit $O_Z$,
then $v$ and $v'$ have the same distance to $w$.
For this reason, the set $V(T)$ does not contain two vertices in the same $\Delta_Z$-orbit
throughout the construction.

Having defined~$T$, we apply the fixed-point-free permutation $\delta_Z$
to $T$ and define $T_i\coloneqq T^{(\delta_Z)^i}$ for $i\in[h+1]$.
Since $\delta_Z$ has no fixed points on $Z$ and since $|\Delta_Z|$ has only prime factors greater than $\alpha_h\geq h$,
all $\delta_Z$-orbits have size greater than $\alpha_h\geq h$. Thus, the trees
$T_1,\ldots,T_{h+1}$ have inner vertices that are pairwise disjoint and they have the same set of leaves, namely $N(Z)$. This gives a $K_{h+1,h+1}$-minor
of $G[Z\cup N(Z)]$, and thus
a $K_{h+1}$ minor.
\end{claimproof}

Let $G_{Z,\chi}$ be the vertex-colored graph based on $G[Z]$
where each vertex in $Z$ is colored with its $\Delta$-orbit
(i.e., two vertices $v,w\in Z$ get the same color
if and only if $v,w$ are in the same $\Delta$-orbit).

\begin{claim}\label{claim:extends}
Each automorphism
of $G_{Z,\chi}$ can be extended to an automorphism of $G_\chi[Z\cup N(Z)]$ that fixes $N(Z)$ pointwise.
\end{claim}

\begin{claimproof}
Suppose $\delta_Z\in\aut(G_{Z,\chi})$. We extend $\delta_Z$ to $\delta_{\hat Z}\in\aut(G_\chi[Z\cup N(Z)])$
by fixing all points in $N(Z)$.
We claim that $\delta_{\hat Z}$ preserves all edges between $Z$ and $N(Z)$.
Let $vw\in E(G)$ be an edge
with $v\in Z,w\in N(Z)$.
Since $v,v^{\delta_{\hat Z}}$ are in the same $\Delta$-orbit (because of the coloring of $G_{Z,\chi}$),
there is an automorphism $\delta\in\Delta$ such that $v^\delta=v^{\delta_{\hat Z}}$.
Since $\delta$ maps $Z$ to itself, it also stabilizes $N(Z)$ setwise,
and since $|N(Z)|\leq h$ (\cref{claim:small}), it fixes $N(Z)$ pointwise (since the order of $\delta$
has only prime factors greater than $\alpha_h\geq h$).
This means that $w^\delta=w$ for all $w\in N(Z)$.
But since $\delta$ is an automorphism and $vw\in E(G)$, it holds that $v^{\delta_{\hat Z}} w^{\delta_{\hat Z}}=v^{\delta_{\hat Z}} w=v^\delta w^\delta\in E(G)$.
This proves the claim.
\end{claimproof}

By \cref{claim:extends}, we have that $\aut(G_{Z,\chi})=\Delta_Z$,
and thus $\aut(G_{Z,\chi})$ is a homomorphic image of $\aut(G_\chi)$
and only has prime factors greater than $\alpha_h$.
This allows us to apply induction,
and thus $\aut(G_{Z,\chi})$ is a repeated direct and wreath product of abelian groups.
We need to show that $\aut(G_\chi)$ is a repeated direct and wreath product of abelian groups.

We can assume that in the vertex-colored graph~$G_\chi=(G,\chi)$ vertices from different $\Delta$-orbits have different colors.

Recall that~$\aut(G_\chi)[F]$ is semi-regular.
It follows from Claim~\ref{claim:small} that if an automorphism of $G_\chi$ maps~$Z$ to itself, then it fixes~$N(Z)$ pointwise, and by semi-regularity the entire set~$F$ pointwise.
Therefore, the automorphism group $\aut(G_\chi)$ acts semi-regularly on $\CZ$.
This implies that every graph $G_{Z,\chi},Z\in\CZ$ is isomorphic to exactly $|\aut(G_\chi)[F]|$ graphs $G_{Z',\chi},Z'\in\CZ$.

Let~$\tilde Z$ be a maximal union of
connected components from~$\CZ$ such that the graphs $G_{Z,\chi}$ for $Z\in \tilde Z$ are pairwise non-isomorphic. (This simply means that the
vertices in the different components have different colors.)
The images of~$\tilde Z$ under automorphisms from~$\aut(G_\chi)$ are pairwise
disjoint. They are permuted by the automorphisms~$\aut(G_\chi)$. There
are exactly $|\aut(G_\chi)[F]|$
different images and the induced permutation
group on this set of images is $\aut(G_\chi)[F]$.
We will now use Claim~\ref{claim:extends} to show that~$\aut(G_\chi)$ is the wreath
product~$\aut(G_{\tilde Z,\chi})\wr\aut(G_\chi)[F]$.

Let $\tilde Z_1,\ldots,\tilde Z_{|O|}$
be the images of $Z$ under $\aut(G_\chi)$.
Recall that $\Psi\coloneqq\aut(G_\chi)_{(F)}$ stabilizes each $Z\in\CZ$ setwise,
and thus $\Psi\trianglelefteq\aut(G_\chi)$ is a direct
product $\aut(G_{\tilde{Z}_1,\chi})\times\ldots\times\aut(G_{\tilde{Z}_{|O|},\chi})$ of isomorphic (base) groups (by Claim~\ref{claim:extends} all combinations of automorphisms for the graphs~$G_{\tilde{Z}_1,\chi},\ldots,G_{\tilde{Z}_{|O|},\chi}$ extend to automorphisms of~$G$).

In the following, we define a suitable (top) group $\Theta\leq\aut(G_\chi)$
permuting the components $\tilde Z_1,\ldots,\tilde Z_{|O|}$.
For each~$i\in \{1,\ldots,|O|\}$ let $\phi_{1,i}$ be an isomorphism from $G_{\tilde Z_1,\chi}$ to $G_{\tilde Z_i,\chi}$. Also define $\phi_{i,j}\coloneqq\phi_{1,i}^{-1}\phi_{1,j}$ for all $i,j\in\{1,\ldots,|O|\}$.
For~$\varphi\in \aut(G_\chi)[F]$ choose $\hat\phi_0\in\aut(G_\chi)$ such that $\hat\phi_0[F]=\phi$.
Then, for each $i\in\{1,\ldots,|O|\}$ there is an $i'\in\{1,\ldots,|O|\}$
such that $\tilde Z_i^{\hat\phi_0}=\tilde Z_{i'}$.
Note that $i'$ only depends on $\phi$ and $i$ (but not on the choice of $\hat\phi_0$)
since otherwise there is a permutation in $\aut(G_\chi)_{(F)}$ that swaps
two components in $\CZ$ contradicting that
$|\aut(G_\chi)_{(F)}|$ is odd.
We define $\hat\phi$ such that $\hat\phi[F]=\phi$
and $\hat\phi[\tilde Z_i]=\phi_{i,i'}$.
Then, it holds that $\hat\phi\in\aut(G_\chi)$ since
it holds that $(\hat\phi\hat\phi_0^{-1})[F]=\id_F$ and $(\hat\phi\hat\phi_0^{-1})[Z]\in\aut(G_{Z,\chi})$ for
each $Z\in\CZ$ implying that
$\hat\phi\hat\phi_0^{-1}\in\aut(G_\chi)$ by \cref{claim:extends}.
We define the (top) group $\Phi\leq\aut(G_\chi)$ (isomorphic to $\aut(G_\chi)[F]$) as the set
of extensions $\{\hat\phi\mid \phi\in\aut(G_\chi)[F]\}$.
Then, the groups $\Psi,\Phi$ are permutable complements, i.e.,
$\Psi\cap\Phi$ is the trivial group and $\Psi\Phi=\aut(G_\chi)$.
Furthermore, the top group $\Phi$ acts as automorphism on $\Psi$ by conjugation.
Thus, the automorphism group $\aut(G_\chi)$ is an (internal) wreath product of $\Psi$ and~$\Phi$.
\end{proof}

\section{Conclusion}\label{sec:conclusion}
We characterized the automorphism groups of
graphs of bounded Hadwiger number. The characterization lends itself
to proving various properties such as the resolution of Babai's three conjectures. A central part of the characterization
analyzes edge-transitive graphs, and this is done via limit
constructions.
 
However, this approach does not lead to explicit bounds and it remains
as interesting future work to analyze how large the graphs have to be
for the structural requirements to kick in. For example, it might be
interesting to determine reasonable bounds for the function~$f(h)$ in
the classification theorems.  In particular, it remains open what
quantitative results can further be concluded for vertex- or
edge-transitive graphs.

In our proofs we focused on the possible groups that can arise as
automorphism groups of bounded Hadwiger number graphs. However, our
proofs actually show that the structure of the graphs is also very
restricted when the automorphism groups are sufficiently rich. It is
an interesting question whether we can make further use of the
structure that must necessarily emerge in the graphs.

\bibliographystyle{amsplain}


\providecommand{\bysame}{\leavevmode\hbox to3em{\hrulefill}\thinspace}
\providecommand{\MR}{\relax\ifhmode\unskip\space\fi MR }
\providecommand{\MRhref}[2]{%
  \href{http://www.ams.org/mathscinet-getitem?mr=#1}{#2}
}
\providecommand{\href}[2]{#2}

\begin{aicauthors}
\begin{authorinfo}[mgro]
  Martin Grohe\\
  RWTH Aachen University\\
  Aachen, Germany\\
  grohe\imageat{}informatik\imagedot{}rwth-aachen\imagedot{}de\\
  \url{https://www.lics.rwth-aachen.de/~grohe/}
\end{authorinfo}
\begin{authorinfo}[pschw]
  Pascal Schweitzer\\
  Professor\\
  TU Darmstadt\\
  Darmstadt, Germany\\
  schweitzer\imageat{}mathematik\imagedot{}tu-darmstadt\imagedot{}de \\
  \url{https://www.mathematik.tu-darmstadt.de/~schweitzer/}
\end{authorinfo}
\begin{authorinfo}[dwieb]
  Daniel Wiebking\\
  Hannover, Germany\\
  daniel\imagedot{}wiebking\imageat{}web.de
\end{authorinfo}
\end{aicauthors}

\end{document}